\newcolumntype{M}[1]{>{\centering\arraybackslash}m{#1}} 
\definecolor{linkred}{rgb}{0.7,0.2,0.2}
\definecolor{linkblue}{rgb}{0,0.2,0.6}
\numberwithin{figure}{section}
\DeclareFontFamily{OMS}{rsfs}{\skewchar\font'60}
\DeclareFontShape{OMS}{rsfs}{m}{n}{<-5>rsfs5 <5-7>rsfs7 <7->rsfs10 }{}
\DeclareSymbolFont{rsfs}{OMS}{rsfs}{m}{n}
\DeclareSymbolFontAlphabet{\scr}{rsfs}
\DeclareSymbolFontAlphabet{\scr}{rsfs}
\DeclareMathOperator{\Id}{Id}
\DeclareMathOperator{\Pic}{Pic}
\DeclareMathOperator{\rank}{rank}
\DeclareMathOperator{\red}{red}
\DeclareMathOperator{\sing}{sing}
\DeclareMathOperator{\Sym}{Sym}
\DeclareMathOperator{\supp}{supp}
\DeclareMathOperator{\mult}{mult}
\DeclareMathOperator{\Chow}{Chow}
\DeclareMathOperator{\Exc}{Exc}
\DeclareMathOperator{\Nef}{Nef}
\DeclareMathOperator{\Pseff}{Pseff}
\newcommand{\sE}{\scr{E}}
\newcommand{\sF}{\scr{F}}
\newcommand{\sG}{\scr{G}}
\newcommand{\sH}{\scr{H}}
\newcommand{\sHom}{\scr{H}\negthinspace om}
\newcommand{\sI}{\scr{I}}
\newcommand{\sL}{\scr{L}}
\newcommand{\sM}{\scr{M}}
\newcommand{\sO}{\scr{O}}
\newcommand{\sQ}{\scr{Q}}
\newcommand{\cK}{\mathcal K}
\newcommand{\bC}{\mathbb{C}}
\newcommand{\bN}{\mathbb{N}}
\newcommand{\bP}{\mathbb{P}}
\newcommand{\bQ}{\mathbb{Q}}
\newcommand{\bR}{\mathbb{R}}
\newcommand{\bZ}{\mathbb{Z}}
\theoremstyle{plain}
\newtheorem{thm}{Theorem}[section]
\newtheorem{cor}[thm]{Corollary}
\newtheorem{defn}[thm]{Definition}
\newtheorem{lem}[thm]{Lemma}
\newtheorem{prop}[thm]{Proposition}
\theoremstyle{remark}
\newtheorem{claim}[thm]{Claim}
\newtheorem{c-n-d}[thm]{Claim and Definition}
\newtheorem{example}[thm]{Example}
\newtheorem{rem}[thm]{Remark}
\newtheorem{question}[thm]{Question}
\newtheorem*{rem-nonumber}{Remark}
\numberwithin{equation}{thm}
\setlist[enumerate]{label=(\thethm.\arabic*), before={\setcounter{enumi}{\value{equation}}}, after={\setcounter{equation}{\value{enumi}}}}
\newcommand{\factor}[2]{\left. \raise 2pt\hbox{$#1$} \right/\hskip -2pt\raise -2pt\hbox{$#2$}}
\author{Jie Liu} %
\address{Jie Liu, Institute of Mathematics, Academy of Mathematics and Systems Science, Chinese Academy of Sciences, Beijing, 100190, China}
\email{\href{jliu@amss.ac.cn}{jliu@amss.ac.cn}}
\urladdr{\href{http://www.jliumath.com}{http://www.jliumath.com}}
\keywords{Seshadri constant, holomorphic foliation, algebraic rank, Fano varieties}
\subjclass[2020]{14J45,14E30,32M25,32S65}
\title{Fano foliations with small algebraic ranks}
\date{\today}
\begin{document}
	
\begin{abstract}
	In this paper we study the algebraic ranks of foliations on $\bQ$-factorial normal projective varieties. We start by establishing a Kobayashi-Ochiai's theorem for Fano foliations in terms of algebraic rank. We then investigate the local positivity of the anti-canonical divisors of foliations, obtaining a lower bound for the algebraic rank of a foliation in terms of Seshadri constant. We describe those foliations whose algebraic rank slightly exceeds this bound and classify Fano foliations on smooth projective varieties attaining this bound. Finally we construct several examples to illustrate the general situation, which in particular allow us to answer a question asked by Araujo and Druel on the generalised indices of foliations.
\end{abstract}

\maketitle
\tableofcontents

\section{Introduction}

A \emph{Fano variety} is a normal projective variety $X$ such that $-K_X$ is an ample $\bQ$-Cartier divisor. The \emph{Fano index} of $X$ is the largest positive rational number $\iota_X$ such that $-K_X\sim_{\bQ} \iota_X H$ for a Cartier divisor $H$ on $X$ and it can be viewed as an invariant measuring the global positivity of the anti-canonical divisor $-K_X$ of $X$. When $X$ is an $n$-dimensional smooth Fano variety, a classical theorem of Kobayashi and Ochiai \cite{KobayashiOchiai1973} asserts that $\iota_X\leq n+1$ with equality if and only if $X\cong \bP^n$ and this result can also be generalised to singular Fano varieties, see \cite{Fujita1989,Maeda1990,AraujoDruel2014} and Theorem \ref{t.Kobayashi-Ochiai}. 

Similar ideas can be also applied to the context of \emph{foliations} on normal projective varieties (with mild singularities). One of the central problem in the theory of foliations is to find conditions that guarantee the existence of algebraic leaves and the notion of \emph{algebraic rank} was introduced  to measure the algebraicity of leaves \cite[Definition 2.4]{AraujoDruel2019}. More precisely, the algebraic rank $r^a$ of a foliation $\sF\subsetneq T_X$ on a normal projective variety $X$ is the maximum dimension of an algebraic subvariety through a general point of $X$ that is tangent to $\sF$. Moreover, these maximal algebraic subvarieties tangent to $\sF$ are actually the leaves of a subfoliation $\sF^a$ of $\sF$ which is called the \emph{algebraic part} of $\sF$. We refer the reader to \cite[Lemma 2.4]{LorayPereiraTouzet2018} and Definition \ref{d.algebraic-part} for more details.

Let $\sF\subsetneq T_X$ be a foliation on a normal projective variety $X$. The \emph{canonical class} of $\sF$ is a Weil divisor $K_{\sF}$ such that $\sO_X(-K_{\sF})\cong \det(\sF)$. A \emph{Fano foliation} $\sF\subsetneq T_X$ is a foliation such that $-K_{\sF}$ is an ample $\bQ$-Cartier divisor. We have the following two notions which generalise the Fano index of Fano varieties.

\begin{defn}
	Let $X$ be a normal projective variety and let $\sF\subsetneq T_X$ be a foliation such that $-K_{\sF}$ is a $\bQ$-Cartier divisor.
	\begin{enumerate}
		\item If $-K_{\sF}$ is big, the generalised index $\widehat{\iota}(\sF)$ of $\sF$ is defined as follows:
		\begin{center}
			$\widehat{\iota}(\sF)\coloneqq \sup\{t\in \bR\,|\,-K_{\sF} \equiv tH+P,$ where $H$ is an ample Cartier divisor and $P$ is a pseudoeffective $\bR$-Cartier $\bR$-divisor$\}$.
		\end{center}
		
		\item If $-K_{\sF}$ is ample, the Fano index $\iota(\sF)$ of $\sF$ is defined as the largest rational number such that $-K_{\sF} \sim_{\bQ} \iota(\sF) H$ for some ample Cartier divisor $H$.
	\end{enumerate}
\end{defn}

Kobayashi-Ochiai's theorem was successfully generalised to Fano foliations in the last decade, see \cite{AraujoDruelKovacs2008,AraujoDruel2013,AraujoDruel2014,Hoering2014,AraujoDruel2019} and the references therein. Our first theorem slightly generalises these results, see also \cite[Theorem 1.5]{AraujoDruel2019}.

\begin{thm}
	\label{t.Kobayashi-Ochiai-I}
	Let $X$ be a $\bQ$-factorial normal projective variety and let $\sF\subsetneq T_X$ be a foliation on $X$ such that $-K_{\sF}$ is a big $\bQ$-Cartier divisor. Then $r^a\geq \widehat{\iota}(\sF)$.
\end{thm}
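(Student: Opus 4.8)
The plan is to reduce the problem to the algebraic part $\sF^a$ of $\sF$ and then to the induced foliation on the base of an "algebraic-part fibration". First I would recall from the structure theory of the algebraic part (see \cite{AraujoDruel2019,LorayPereiraTouzet2018}) that, after passing to a suitable birational model, there is a dominant rational map $\varphi\colon X \dashrightarrow Y$ whose general fibre $F$ is the closure of a general leaf of $\sF^a$, so that $\dim F = r^a$ and $\sF^a$ is the relative tangent foliation of $\varphi$. The quotient $\sF/\sF^a$ then descends to a foliation by algebraically non-integrable leaves on $Y$. The key numerical input is the adjunction-type comparison
\begin{equation}
	\label{e.adjunction-KF}
	K_{\sF}|_F \sim_{\bQ} K_F + (\text{boundary terms supported on the ramification/non-flat locus}),
\end{equation}
which shows that $-K_{\sF}$ restricted to a general fibre is, up to effective error, $-K_F$; since $F$ is rationally connected when $-K_{\sF}$ is big enough, this is where positivity gets transferred between $X$ and $F$.

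The heart of the argument is the inequality $r^a \ge \widehat{\iota}(\sF)$ itself, which I would prove by contradiction: suppose $-K_{\sF}\equiv tH+P$ with $H$ ample Cartier, $P$ pseudoeffective, and $t > r^a$. Restricting to a general leaf closure $F$ of the algebraic part, one gets $-K_{\sF}|_F \equiv t H|_F + P|_F$. Now the point is that a leaf of $\sF^a$ is tangent to $\sF$, hence to $\sF^a$ itself, so $F$ has dimension exactly $r^a$, and $-K_{\sF}|_F$ agrees with $-K_{\sF^a}|_F = -K_F$ modulo an effective divisor coming from \eqref{e.adjunction-KF}. Thus $-K_F \equiv t H|_F + (\text{pseff})$ on the $r^a$-dimensional variety $F$ with $t > r^a = \dim F$. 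Applying the singular Kobayashi–Ochiai bound (Theorem \ref{t.Kobayashi-Ochiai}, in the generalised-index form: $\widehat{\iota}(F)\le \dim F + 1$ is too weak, so instead I would use the sharper statement that $-K_F \equiv tH' + (\text{pseff})$ forces $t \le \dim F$ unless $F \cong \bP^{\dim F}$) yields a contradiction, or forces $F\cong \bP^{r^a}$ with $H|_F = \sO(1)$ and $P|_F = 0$ and $t = r^a+1$; but in that boundary case one checks the adjunction error in \eqref{e.adjunction-KF} must vanish and re-examines whether $t$ can actually exceed $r^a$, which it cannot because the generalised index is defined via a supremum over \emph{all} of $X$, not just $F$, and $H$ need not restrict to $\sO(1)$.

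A cleaner route, which I would actually pursue, avoids the delicate boundary analysis: let $d = r^a$ and let $W \subset X$ be a general $d$-dimensional leaf closure. Choose a general complete intersection curve $C = H_1 \cap \dots \cap H_{n-1}$ on $X$ (with $H_i$ general members of $|mH|$ for $m\gg0$) passing through a general point; by Mehta–Ramanathan-type genericity, $C$ meets $W$ and the restriction $\sF|_C$ is controlled. Then $(-K_{\sF})\cdot C \ge t (H\cdot C)$ since $P\cdot C \ge 0$, while the tangency of $W$ to $\sF$ together with a bend-and-break / rational-curve argument on the rationally connected fibre $W$ (cf. \cite{Hoering2014,AraujoDruel2013}) produces a rational curve $\ell$ through a general point of $W$, tangent to $\sF$, with $(-K_{\sF})\cdot \ell \le \dim W + 1 = r^a + 1$ and in fact $\le r^a$ unless $W$ is a projective space in which direction the anticanonical degree is $r^a+1$ only along lines — again giving $t \le r^a$ after accounting for the fact that such $\ell$ can be taken with $H\cdot \ell \ge 1$.

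\textbf{Main obstacle.} The hard part is making precise the comparison \eqref{e.adjunction-KF} between $K_{\sF}|_F$ and $K_F$ on a general algebraic leaf — i.e.\ controlling the effective "ramification" correction so that positivity genuinely transfers — together with handling the singularities (only $\bQ$-factoriality is assumed, $\sF$ may be singular, and $F$ may be non-normal) so that the singular Kobayashi–Ochiai statement of Theorem \ref{t.Kobayashi-Ochiai} applies to $F$ or to a resolution. The rational-curve approach sidesteps the adjunction issue but instead requires a careful bend-and-break producing curves tangent to $\sF$ with the right anticanonical degree bound, which is exactly the technical core of the cited works of Höring and Araujo–Druel; I expect to invoke their results essentially verbatim rather than re-derive them.
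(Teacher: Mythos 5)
There is a genuine gap. Your proposal correctly identifies the setup (restrict to a general leaf closure $F$ of the algebraic part, use an adjunction formula $i^*K_{\sF}\sim_{\bQ}K_F+D_F$ with effective boundary, invoke the singular Kobayashi--Ochiai theorem), but it mislocates where the contradiction actually comes from. Kobayashi--Ochiai applied to $(F,D_F+D_P)$ with coefficient $t>r^a=\dim F$ does \emph{not} produce an immediate numerical contradiction, precisely because it allows $t$ up to $r^a+1$; you notice this and then try to wave it away ("one checks the adjunction error must vanish \dots the generalised index is a supremum over all of $X$ \dots $H$ need not restrict to $\sO(1)$"), but none of those remarks closes the gap, and the bend-and-break alternative at the end is too vague to evaluate. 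What Kobayashi--Ochiai actually gives in the paper is a \emph{klt} statement: if $t>r^a$ then $F\cong\bP^{r^a}$ with $\deg(D_F+D_P)<1$, so the pair $(F,D_F+D_P)$ is klt. The contradiction is then delivered by a completely separate foliation-theoretic engine, Proposition \ref{t.variant-AD} (a variant of Araujo--Druel's Theorem 5.1 on fibrations over curves): if the log algebraic part of a general leaf is klt, then $-(K_{\sF}+P)$ cannot be nef and big. Since $-(K_{\sF}+P)\sim_{\bQ}(\widehat\iota(\sF)-\varepsilon)H$ is ample by construction, this is the contradiction. That proposition is proved by cutting with a general complete intersection curve $C$, base-changing the family of leaves to a flat family with reduced fibres over a finite cover $C'$, and applying the Araujo--Druel negativity result for $-(K_{Y'/C'}+\Delta)$. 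Your proposal contains nothing playing this role.

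A secondary but also essential point you gloss over is the \emph{effectivity} of the boundary $D_F$. The log algebraic part mixes two contributions: the usual $\nu$-exceptional correction $\Delta_F$ from the family of leaves of $\sF^a$, and a correction $E_{\sG}|_F$ coming from the transcendental part $\sG$ on the base $Z$. The latter has, a priori, a negative part. The paper controls it (Lemma \ref{l.negative-components}) by invoking Campana--P\u{a}un's theorem that $K_{\sG}$ is pseudoeffective for a purely transcendental foliation, taking the divisorial Zariski decomposition, and running a negativity lemma (Lemma \ref{l.generic-negativity}) to conclude that the negative part $E_{\sG}^-$ does not dominate $Z$ and hence does not hit a general fibre $F$. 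Your "boundary terms supported on the ramification/non-flat locus" in \eqref{e.adjunction-KF} implicitly assumes this effectivity, which is genuinely nontrivial and is one of the key technical inputs you flag as a "main obstacle" but do not resolve. In short: your proposal has the right cast of characters (general leaf, adjunction, singular Kobayashi--Ochiai) but is missing the actual mechanism — Proposition \ref{t.variant-AD} via Araujo--Druel's fibration theorem — and the Campana--P\u{a}un input needed to make the boundary divisor effective.
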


As an application, we obtain a Kobayashi-Ochiai theorem for foliations. See Example \ref{e.generalised-cone} for the notion of normal generalised cone and see also \cite[Theorem 1.1]{AraujoDruelKovacs2008}, \cite[Theorem 1.2]{AraujoDruel2014}, \cite[Theorem 1.3]{Hoering2014} and \cite[Corollary 1.6]{AraujoDruel2019} for related results.

\begin{thm}
	\label{t.Kobayashi-Ochiai-II}
	Let $X$ be a $\bQ$-factorial normal projective variety and let $\sF\subsetneq T_X$ be a Fano foliation on $X$. Then $r^a\geq \iota(\sF)$ and the equality holds if and only if $X$ is a normal generalised cone over a polarised $\bQ$-factorial variety $(T,\sL)$ with vertex $\bP^{r^a-1}$ and there exists a foliation $\sG$ on $T$ with $K_{\sG}\sim_{\bQ} 0$ such that $\sF$ is the pull-back of $\sG$ under the natural rational map $h:X\dashrightarrow T$.
\end{thm}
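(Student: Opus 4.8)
The inequality is immediate from Theorem~\ref{t.Kobayashi-Ochiai-I}: an ample divisor is big, and taking the pseudoeffective part $P=0$ gives $\widehat{\iota}(\sF)\ge\iota(\sF)$, so $r^{a}\ge\widehat{\iota}(\sF)\ge\iota(\sF)$. All the content is therefore in the equality case. From now on write $r:=r^{a}=\iota(\sF)$ and fix an ample Cartier Weil divisor $H$ with $-K_{\sF}\sim_{\bQ}rH$. My plan is: (i) describe $\sF$ through its algebraic part as a pull-back; (ii) run an adjunction computation on a general leaf to force that leaf to be $(\bP^{r},\text{hyperplane})$; (iii) globalise this to a cone structure and read off $K_{\sG}$.

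First I would set up the geometry of the algebraic part. The subfoliation $\sF^{a}\subseteq\sF$ has rank $r^{a}=r$, and the closures of its general leaves are the fibres of a dominant rational map $h\colon X\dashrightarrow T$ onto a normal projective variety, with respect to which $\sF$ is the pull-back of a foliation $\sG$ on $T$ (see \cite{LorayPereiraTouzet2018} and Definition~\ref{d.algebraic-part}). Here $\dim T\ge1$: otherwise $h$ is constant and $\sF=h^{*}\sG$ is the tangent sheaf of the unique leaf, forcing $\sF=T_{X}$, contrary to $\sF\subsetneq T_{X}$. Moreover $\sG$ has trivial algebraic part, for if $\sG^{a}\ne0$ the $h$-preimage of an algebraic leaf of $\sG^{a}$ would be an algebraic subvariety of dimension $>r$ tangent to $\sF$, contradicting $r^{a}=r$; in particular $\sG$ is purely transcendental and $K_{\sG}$ is pseudoeffective.

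The crux is an adjunction along a general leaf. Let $\bar L$ be a general leaf closure and $\nu\colon L\to\bar L$ its normalisation, so $\dim L=r$ and $A:=\nu^{*}(H|_{\bar L})$ is ample and Cartier. Restricting $0\to\sF^{a}\to\sF\to\sF/\sF^{a}\to0$ to $\bar L$ and using that $\sF/\sF^{a}$ restricts to a trivial sheaf on the general fibre $\bar L$ (it is the pull-back of $\sG$ and $h$ contracts $\bar L$ to a point), I get $-K_{\sF}|_{\bar L}=-K_{\sF^{a}}|_{\bar L}$; the adjunction formula for algebraically integrable foliations (cf.\ \cite{AraujoDruel2014}) then produces an effective $\bQ$-divisor $\Delta_{L}$ on $L$ with $-K_{\sF^{a}}|_{L}\sim_{\bQ}-K_{L}-\Delta_{L}$. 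Hence $-(K_{L}+\Delta_{L})\sim_{\bQ}rA$ with $r=\dim L$; in particular $-K_{L}$ is big and $\widehat{\iota}(L)\ge\dim L$. By the Kobayashi--Ochiai theorem (Theorem~\ref{t.Kobayashi-Ochiai}) either $L\cong\bP^{r}$, or $\widehat{\iota}(L)=\dim L$ and $L$ is a quadric (with $\Delta_{L}=0$) or a non-trivial generalised cone. I would exclude the latter two: if $\Delta_{L}=0$ then $-K_{\sF}$ restricts on the general leaf to $-K_{\bar L}$, so the leaves form an honest Fano fibration over the positive-dimensional base $T$ with $-K_{\sF}$ equal to the relative anti-canonical divisor, which cannot be ample; and on a non-trivial generalised cone the ample class $A$ cannot be Cartier. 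So $L\cong\bP^{r}$, and writing $A\sim_{\bQ}mH_{0}$ with $H_{0}$ a hyperplane, the relation $-K_{L}\sim_{\bQ}rA+\Delta_{L}$ with $\Delta_{L}$ effective forces $(r+1)-rm\ge0$, hence $m=1$, $A\sim H_{0}$ and $\Delta_{L}\sim H_{0}$. I expect this exclusion to be the main obstacle, and it is here that $\bQ$-factoriality of $X$ and the integrality of $H$ are used.

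Finally I would globalise. Now every general leaf closure is a projective space $\bP^{r}$ on which $H$ restricts to $\sO_{\bP^{r}}(1)$, and the hyperplanes $\Delta_{L}$ all lie in a common proper subvariety (forced by the degeneracy loci of $h$ and the singular locus of $\sF$), which must therefore be a fixed $\bP^{r-1}=\bP^{r^{a}-1}$ contained in every general leaf. By the structure theory for algebraically integrable foliations with general log leaf $(\bP^{r},\text{hyperplane})$ (cf.\ \cite{AraujoDruel2014,AraujoDruel2019}) this exhibits $X$ as the normal generalised cone over a polarised variety $(T,\sL)$ with vertex $\bP^{r-1}$, where $\sL$ is induced by $H$; and $T$ is $\bQ$-factorial because $X$ is and $h$ is a fibration. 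On such a cone $\sF^{a}$ is the ruling foliation, for which a direct computation gives $-K_{\sF^{a}}\sim_{\bQ}rH$; comparing with $-K_{\sF}\sim_{\bQ}rH$ shows $\det(\sF/\sF^{a})$ is numerically trivial, and since $\sF/\sF^{a}$ is (a twist of) the pull-back of $\sG$, the pseudoeffectivity of $K_{\sG}$ lets one conclude $K_{\sG}\sim_{\bQ}0$. This completes the description of $\sF$ as the pull-back of $\sG$ under $h\colon X\dashrightarrow T$.
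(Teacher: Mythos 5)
There is a genuine gap at the heart of step (ii). You claim that $\sF/\sF^{a}$ ``restricts to a trivial sheaf on the general fibre $\bar L$'' because it is the pull-back of $\sG$ and $h$ contracts $\bar L$ to a point, and you deduce $-K_{\sF}|_{\bar L}=-K_{\sF^{a}}|_{\bar L}$. This is exactly the assertion that the paper has to \emph{prove}, and it is not a priori true. The identification $\sQ\coloneqq(\sF/\sF^{a})^{**}\cong h^{[*]}\sG$ holds only on the open locus $\bar L^{o}$ where $h$ is defined; the complement $\bar L\setminus\bar L^{o}$ can have codimension one in $\bar L$, and $\det(\sQ)$ restricted to the full leaf closure may pick up a nontrivial contribution supported there. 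In the paper's notation (Section~\ref{s.log-algebraic-part}) this contribution is precisely $E_{\sG}|_{F}$, i.e.\ the difference between the log algebraic part $D_{F}$ and the log leaf $\Delta_{F}$; one has $K_{F}+D_{F}\sim_{\bQ}\rho^{*}K_{\sF}|_{F}$ with $D_{F}\geq\Delta_{F}$, and $D_{F}=\Delta_{F}$ \emph{iff} $K_{\sQ}|_{F}\equiv0$. What the paper actually shows (through the delicate calculation along a general complete-intersection curve $C$, obtaining first $\deg(n^{*}K_{\sG})\leq0$ via \cite[Lemma~4.11]{AraujoDruel2014}, then $K_{\sG}\cdot B'=0$, then $K_{\sQ}\equiv0$ via \cite[Lemma~6.5]{Peternell1994}) is precisely that $K_{\sQ}$ is numerically trivial, which \emph{then} lets one compare $(F,D_{F})$ with the general log leaf $(F,\Delta_{F})$ of $\sF^{a}$ and hand the algebraically integrable foliation $\sF^{a}$ to H\"oring's classification. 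Your proposal assumes the conclusion of this step.

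Two smaller points. The Kobayashi--Ochiai application should be to the log algebraic part $(F,D_{F})$ rather than to $(L,\Delta_{L})$; once that substitution is made the exclusion of the quadric case follows at once from Proposition~\ref{t.variant-AD} (a klt pair with ample anti-log-canonical along the leaves is impossible) --- your ``non-trivial generalised cone'' alternative does not actually appear in Theorem~\ref{t.Kobayashi-Ochiai} at $\iota=n$, and the claim that the polarisation fails to be Cartier on a quadric cone is false. Finally, your globalisation paragraph is essentially a sketch of the input needed for \cite[Theorem~1.3]{Hoering2014}, but the paper has to justify applying that theorem under mere numerical equivalence $-K_{\sF^{a}}\equiv r^{a}A$, which it does by noting that the description of the general log leaf (the only place $\bQ$-linear equivalence is used in H\"oring's proof) has already been established independently. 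That bridge is absent from your argument.
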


Next we study the local positivity of the anti-canonical divisor of a foliation. Recall that Demailly introduced in \cite{Demailly1992} the notion of \emph{Seshadri constant} to measure the local positivity of nef divisors on projective varieties at a smooth point.

\begin{defn}
	Let $D$ be a nef $\bR$-Cartier $\bR$-divisor on a normal projective variety $X$ and $x\in X$ a smooth point. The Seshadri constant of $D$ at $x$ is defined as
	\[
	\epsilon(D,x)\coloneqq\inf\frac{D\cdot C}{\mult_x C},
	\]
	where the infimum is taken over all irreducible curves $C$ passing through $x$. We denote by $\epsilon(L)$ to be the supremum of $\epsilon(L,x)$ as $x$ varies over all smooth points of $X$.
\end{defn}

According to the lower semi-continuity of the function $\epsilon(D,\cdot)$, the supremum $\epsilon(L)$ is attained at a very general point of $X$ (see \cite[Example 5.1.11]{Lazarsfeld2004} or Lemma \ref{l.properties-Seshadri}). The Seshadri constant has many interesting properties. For example, we have the following theorem proved by Zhuang in \cite{Zhuang2018a}, which can be regarded as the local analogue of Kobayashi-Ochiai's theorem, see also \cite{BauerSzemberg2009} and \cite{LiuZhuang2018} for related results.

\begin{thm}[\protect{\cite[Theorem 1.5  and Theorem 1.7]{Zhuang2018a}}]
	\label{t.Zhuang-Fano-Seshadri}
	Let $X$ be an $n$-dimensional normal projective variety such that $-K_X$ is a nef and big $\bQ$-Cartier divisor. Then
	\begin{enumerate}
		\item $\epsilon(-K_X)\leq n+1$, and
		
		\item $X\cong \bP^n$ if $\epsilon(-K_X)>n$, and
		
		\item $X$ is rationally connected if $\epsilon(-K_X)> n-1$.
	\end{enumerate}
\end{thm}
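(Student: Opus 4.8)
The plan is to translate each of the three statements into a question about rational curves through a very general point and about the positivity of the anti-canonical class after blowing up that point. Fix a smooth point $x\in X$ at which the Seshadri function attains its maximum, $\epsilon(-K_X,x)=\epsilon(-K_X)$; since this maximum is attained away from a countable union of proper subvarieties, we may assume in addition that $x$ is general enough for the bend-and-break arguments below to apply. Let $\pi\colon X'\to X$ be the blow-up of $x$, with exceptional divisor $E\cong\bP^{n-1}$, so that $K_{X'}=\pi^*K_X+(n-1)E$; recall that $\epsilon(-K_X,x)$ equals the largest $t\geq 0$ for which $\pi^*(-K_X)-tE$ is nef, and that therefore $(-K_X)^n\geq\epsilon(-K_X,x)^n$.

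For (1) and (2) I would use Mori theory. Since $-K_X$ is big and nef, $K_X$ is not pseudo-effective, so $X$ is uniruled, and bend-and-break produces a rational curve $C$ through $x$ with $-K_X\cdot C\leq n+1$, lying in a family dominating $X$. On the other hand, by definition of the Seshadri constant every irreducible curve $C'$ through $x$ satisfies $-K_X\cdot C'\geq \epsilon(-K_X,x)\cdot\mult_x C'$. Applying this to $C$ gives $\epsilon(-K_X)=\epsilon(-K_X,x)\leq -K_X\cdot C\leq n+1$, which is (1). If moreover $\epsilon(-K_X)>n$, the same inequality forces $n<\epsilon(-K_X,x)\cdot\mult_x C\leq -K_X\cdot C\leq n+1$, hence $\mult_x C=1$ and $-K_X\cdot C=n+1$; and, again by the inequality, \emph{every} irreducible rational curve through $x$ has anti-canonical degree at least $n+1$. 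Thus a minimal dominating family of rational curves has members of anti-canonical degree exactly $n+1$ passing through the general point with multiplicity one, and the Cho--Miyaoka--Shepherd-Barron--Kebekus characterization of projective space (in the form valid for the varieties considered here) gives $X\cong\bP^n$, which is (2).

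For (3) I would blow up instead of looking at curves. Since $\epsilon(-K_X,x)>n-1$, the divisor $-K_{X'}=\pi^*(-K_X)-(n-1)E$ is nef by the description of the Seshadri constant recalled above, and it is big because $(-K_{X'})^n=(-K_X)^n-(n-1)^n\geq \epsilon(-K_X,x)^n-(n-1)^n>0$. Hence $X'$ is a weak Fano variety with klt singularities, so it is rationally connected by Zhang's theorem on the rational connectedness of weak log-Fano pairs. Since rational connectedness is a birational invariant and $\pi$ is birational, $X$ is rationally connected, which is (3).

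I expect the main obstacle to be the last input for (2): the characterization of $\bP^n$ by a dominating family of rational curves of anti-canonical degree $n+1$, which is classical for smooth $X$ but here must be established for the mildly singular $X$ in the statement, so that one either invokes a singular analogue or runs a careful deformation/partial-resolution argument keeping track of anti-canonical degrees and multiplicities at $x$. A secondary technical point is to justify that a single very general point $x$ can be chosen to simultaneously maximise the Seshadri function and be general for all applications of bend-and-break, and, in the singular setting, to propagate uniruledness and the klt condition under the blow-up $\pi$.
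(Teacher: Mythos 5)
The paper itself does not prove this theorem; it is quoted verbatim from Zhuang's work \cite[Theorem 1.5 and Theorem 1.7]{Zhuang2018a}, so there is no in-paper argument to compare against. Reviewing your attempt on its own terms, the overall strategy (bend-and-break for (1)--(2), blow-up plus weak-Fano rational connectedness for (3)) is natural and works for \emph{smooth} $X$, but it does not cover the hypotheses actually stated, which only require $X$ normal with $-K_X$ nef, big and $\bQ$-Cartier.

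The most serious gap is in part (3). You conclude that $X'$, the blow-up at $x$, ``is a weak Fano variety with klt singularities'' and invoke Zhang's theorem. But klt is nowhere assumed: $X'$ inherits exactly the singularities of $X$ away from $x$, and these may be arbitrarily bad (lc, non-lc, \dots). Zhang's theorem on rational connectedness of weak log Fano varieties genuinely requires klt, and there is no obvious way to manufacture a klt structure here. Your closing remark about ``propagating the klt condition under the blow-up'' acknowledges the dependence, but there is no klt condition on $X$ to propagate, so this is a missing hypothesis rather than a technicality. Note that the borderline case $\epsilon(-K_X)=n-1$ genuinely admits non-klt, non-rationally-connected examples (e.g.\ the projective cone over an elliptic curve, where $-K_X=H$ is ample, $\epsilon(-K_X)=1=n-1$, and $X$ is only rationally chain connected); part of what Zhuang's argument must do, and yours does not, is extract usable mildness of singularities from the strict inequality $\epsilon>n-1$.

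Parts (1) and (2) have a related issue. The claim that bend-and-break produces, through a smooth very general point of the \emph{singular} variety $X$, a rational curve with $-K_X\cdot C\leq n+1$ is not available off the shelf: the deformation-dimension estimate underlying Mori's bound uses smoothness of $X$ along $C$. Passing to a resolution $\pi\colon Y\to X$ and producing a minimal rational curve $C_Y$ through a preimage of $x$ bounds $-K_Y\cdot C_Y$, but $\pi^*(-K_X)\cdot C_Y$ differs from $-K_Y\cdot C_Y$ by $E\cdot C_Y$ with $E$ the discrepancy divisor, and this correction can have the wrong sign, so the bound does not descend. The secondary obstacle you flag for (2) --- singular analogues of Cho--Miyaoka--Shepherd-Barron/Kebekus --- is real as well; Araujo--Druel's Kobayashi--Ochiai theorem (Theorem \ref{t.Kobayashi-Ochiai} in this paper) gives a $\bP^n$-characterisation for pairs, but it takes as input the Fano index, not the Seshadri constant, and converting a lower bound on $\epsilon$ into a divisibility statement $-K_X\sim_{\bQ}(n+1)H$ is precisely the nontrivial content of Zhuang's theorem. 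Zhuang's actual proof sidesteps rational curves entirely: it uses multiplier ideals, Nadel vanishing and tie-breaking to produce an isolated non-klt centre at $x$ from the large Seshadri constant, and derives both the bound $\epsilon\leq n+1$ and the $\bP^n$-characterisation from the resulting global sections, a mechanism that does not need any a priori control on the singularities of $X$.
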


In the following we aim to generalise Theorem \ref{t.Zhuang-Fano-Seshadri} to foliations and it can be viewed as the local counterpart of Kobayashi-Ochiai's theorem for foliations, see \cite[Theorem 1.5 and Corollary 1.6]{AraujoDruel2019} for related results.

\begin{thm}
	\label{c.upper-bound-Seshadri-weak-Fano}
	Let $X$ be a $\bQ$-factorial normal projective variety and let $\sF\subsetneq T_X$ be a foliation with $-K_{\sF}$ nef. Then $r^a\geq \epsilon(-K_{\sF})$.
\end{thm}
We refer the reader to Theorem \ref{t.bounding-Seshadri} for a more general statement. In the following theorem we address the rationally connectedness of general leaves of the algebraic part $\sF^a$ of a Fano foliation $\sF$ with Seshadri constant $\epsilon(-K_{\sF})>r^a-1$, see \cite[Theorem 1.8 and Corollary 1.9]{AraujoDruel2019} and Section \ref{s.RC-Seshadri} for related results and further discussion.

\begin{thm}
	\label{c.RC-Folia-large-Seshadri}
	Let $X$ be a $\bQ$-factorial normal projective variety and let $\sF\subsetneq T_X$ be a foliation with $-K_{\sF}$ nef and big. If $\epsilon(-K_{\sF}) > r^a - 1$, then the closure of a general leaf of the algebraic part $\sF^a$ of $\sF$ is rationally connected.
\end{thm}

We refer the reader to Theorem \ref{t.rationally-connectedness} for a more general statement. In the viewpoint of Kobayashi-Ochiai's theorem for Fano foliations, it is natural to ask if it is possible to classify those Fano foliations such that the Seshadri constants of their anti-canonical divisors are equal to their algebraic ranks. In the following we give a full list for such foliations on smooth projective varieties, see \cite[Corollary 1.6]{AraujoDruel2019}.

\begin{thm}
	\label{t.Seshadri-MaxValue}
	Let $\sF\subsetneq T_X$ be a Fano foliation on an $n$-dimensional projective manifold $X$. Then $r^a=\epsilon(-K_{\sF})$ if and only if $X\cong \bP^n$ and $\sF$ is the linear pull-back of a purely transcendental foliation on $\bP^{n-r^a}$ with zero canonical class.
\end{thm}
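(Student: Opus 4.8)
The plan is to combine Theorem \ref{t.bounding-Seshadri} with the equality case analysis coming from Theorem \ref{t.Kobayashi-Ochiai-II} and the classification of Fano manifolds / Fano varieties whose Seshadri constant is extremal (Theorem \ref{t.Zhuang-Fano-Seshadri}). Since $X$ is smooth, $-K_{\sF}$ is ample (being a Fano foliation), and we may write $-K_{\sF}\equiv (-K_{\sF})+0$ with $A=-K_{\sF}$ nef and big and $P=0$; thus Theorem \ref{t.bounding-Seshadri} gives $r^a\geq\epsilon(-K_{\sF})$ unconditionally, so the content is in the converse direction, i.e.\ showing that $r^a=\epsilon(-K_{\sF})$ (equivalently $r^a\leq\epsilon(-K_{\sF})$, which forces equality) pins down $X$ and $\sF$ as stated.

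First I would treat the boundary case $r^a=n$, i.e.\ $\sF=\sF^a$ has algebraic leaves through a general point of dimension $n$; here $\sF$ is itself algebraically integrable and one reduces, via the general theory of the algebraic part and the associated family of leaves, to understanding when $\epsilon(-K_{\sF})=n$. In this situation the leaves fill up $X$ and the relation $-K_{\sF}\sim_{\bQ}\iota(\sF)H$ together with $\iota(\sF)\leq r^a=n$ and $\epsilon(-K_{\sF})\le\iota(\sF)$ (Seshadri constant bounded by the index for an ample divisor with a Cartier multiple) forces $\iota(\sF)=n=\epsilon(-K_{\sF})$ and hence $-K_{\sF}=\epsilon(-K_{\sF})H$ with $H$ ample Cartier having $\epsilon(H)=1$; one then invokes the characterisation of $(\bP^n,\sO(1))$ by a large Seshadri constant, or more precisely pushes the equality in Theorem \ref{t.Kobayashi-Ochiai-II} and uses that a smooth generalised cone with the required properties and index $n$ must be $\bP^n$ with $\sF$ having zero canonical class. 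Because $K_{\sF}\sim_{\bQ}0$ on a manifold would force $\sF=T_X$, contradicting $\sF\subsetneq T_X$, the case $r^a=n$ cannot actually occur, so the substantive case is $r^a\le n-1$.

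For the main case I would argue as follows. By Theorem \ref{t.Kobayashi-Ochiai-II} (with $\iota(\sF)=r^a$ once we know $r^a\le\epsilon(-K_{\sF})\le\iota(\sF)\le r^a$, forcing all these equal), $X$ is a normal generalised cone over a polarised $\bQ$-factorial variety $(T,\sL)$ with vertex $\bP^{r^a-1}$, and $\sF$ is the pull-back of a foliation $\sG$ on $T$ with $K_{\sG}\sim_{\bQ}0$ via $h:X\dashrightarrow T$. Now I use smoothness of $X$: a generalised cone with vertex $\bP^{r^a-1}$ is smooth only in very restricted circumstances, and I expect the cone structure together with $\Pic(X)$ being generated by $H$ (which follows from $\iota(\sF)=r^a$ being maximal among indices of divisors, combined with the structure of $\sF^a$) to force $T\cong\bP^{n-r^a}$ and $X\cong\bP^n$ linearly embedded so that $h$ is the linear projection from the vertex $\bP^{r^a-1}$. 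Then $\sF$ is the linear pull-back of $\sG$, and since $K_{\sG}\sim_{\bQ}0$ and $T\cong\bP^{n-r^a}$ the foliation $\sG$ has zero canonical class; finally, since $\sF^a$ is exactly the relative tangent foliation of the projection (its leaves are the fibers $\bP^{r^a}$ of the cone away from the vertex, which have the expected dimension $r^a$), $\sG$ has trivial algebraic part, i.e.\ it is purely transcendental on $\bP^{n-r^a}$. Conversely, for the linear pull-back $\sF$ of a purely transcendental $\sG$ on $\bP^{n-r^a}$ with $K_{\sG}\sim_{\bQ}0$ one checks directly that $r^a=r^a(\sF)=r^a$ equals the relative dimension $r^a$ and that $-K_{\sF}=r^a\,\sO_{\bP^n}(1)$ has Seshadri constant $r^a$ at a general point (the obstructing curves are lines), giving $\epsilon(-K_{\sF})=r^a$ and matching the bound.

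The main obstacle I anticipate is the step deducing $X\cong\bP^n$ from the generalised-cone conclusion of Theorem \ref{t.Kobayashi-Ochiai-II} under the smoothness hypothesis: one must rule out singular cones and show the polarisation base is projective space, which requires controlling $\Pic$ and the singularities of generalised cones, and identifying the algebraic part $\sF^a$ precisely with the vertical foliation of the cone projection so that its general leaves are the linear $\bP^{r^a}$'s. Computing $\epsilon(-K_{\sF})=r^a$ for the explicit model in the converse direction is routine once the model is in hand — the relevant curves are lines in $\bP^n$ meeting the base $\bP^{n-r^a}$ transversally — so the real work is the forward structural classification, which leans on the equality case of Theorem \ref{t.Kobayashi-Ochiai-II} together with standard facts about smooth generalised cones and about linear pull-backs of foliations.
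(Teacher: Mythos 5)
The central step of your proposal is the chain $r^a\le\epsilon(-K_{\sF})\le\iota(\sF)\le r^a$, from which you deduce $\iota(\sF)=r^a$ and then invoke the equality case of Theorem~\ref{t.Kobayashi-Ochiai-II}. The middle inequality $\epsilon(-K_{\sF})\le\iota(\sF)$ is not a general fact and you give no argument for it. Writing $-K_{\sF}\sim_{\bQ}\iota(\sF)H$ for a primitive ample Cartier divisor $H$, your claim amounts to $\epsilon(H)\le 1$, but this can fail: the folklore lower bound $\epsilon(H)\ge 1$ of Conjecture~\ref{c.Lower-Bound-Seshadri} has no companion upper bound, and there are ample Cartier generators of $\Pic$ on smooth Fano manifolds of Picard rank one whose Seshadri constant at a general point exceeds $1$ (for instance when no rational curve of $H$-degree $1$ passes through a general point). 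Consequently the inference $\iota(\sF)=r^a$ is unjustified, and without it you cannot enter the equality analysis of Theorem~\ref{t.Kobayashi-Ochiai-II} at all. This is a genuine gap that breaks the entire forward direction of your argument.

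The paper's proof takes a completely different route that avoids any comparison between the Seshadri constant and the Fano index. It first derives from Corollary~\ref{c.log-leaf-maxSeshadri} that the log algebraic part of a general leaf is $(\bP^{r^a},H)$ and that the closures of general leaves of $\sF^a$ have a common point, then shows $X$ is uniruled (Lemma~\ref{l.uniruledness-criterion}), fixes a covering family of minimal rational curves with numerical class $\alpha$, analyses the splitting type of the positive part $\sF^+_\alpha$ of $\sF$ along a general such curve, shows that such curves are not tangent to $\sF^+_\alpha$, and concludes via Theorem~\ref{t.Araujo-ProjBundle} that the $\cK$-rationally connected quotient is a projective bundle whose base must be a point because of the common point on the leaves, giving $X\cong\bP^n$. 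None of this machinery appears in your proposal, and the conclusion that the generalised cone base must be $\bP^{n-r^a}$ cannot be reached by simply citing smoothness of a generalised cone, as you leave the crucial numerical input $\iota(\sF)=r^a$ unproved. (As a minor side remark, your argument in the $r^a=n$ case is also confused: $K_{\sF}\sim_{\bQ}0$ on a manifold certainly does not force $\sF=T_X$; the case $r^a=n$ is in fact vacuous simply because $\sF$ being saturated and proper forces $\rank\sF\le n-1$.)
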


The sharpness of the results above will be discussed in Section \ref{s.Examples-Seshadri}. In particular, we will construct several examples to illustrate the possibilities of the values of $\widehat{\iota}(\sF)$, $\iota(\sF)$ and $\epsilon(-K_{\sF})$ in general setting. The main result of Section \ref{s.Examples-Seshadri} can be read as follows.

\begin{prop}
	\label{p.existence-examples}
	Let $(r,n)$ be a pair of integers such that $0< r<n$. 
	\begin{enumerate}
		\item\label{i.generalised-index} If $n\geq 3$, for any rational number $0< c\leq r$, there exists an $n$-dimensional projective manifold $X$ and a foliation $\sF\subsetneq T_X$ with algebraic rank $r^a=r$ such that $-K_{\sF}$ is big and $\widehat{\iota}(\sF)=c$.
		
		\item\label{i.Fano-index} If $n\geq 3$, for any rational number $0<c\leq \min\{r,n-2\}$, there exists an $n$-dimensional $\bQ$-factorial normal projective variety $X$ with klt singularities and a Fano foliation $\sF\subsetneq T_X$ with algebraic rank $r^a=r$ such that $\iota(\sF) = \widehat{\iota}(\sF) = c$.
		
		\item\label{i.Seshadri-constant} If $n\geq 2$, for any rational number $0<c\leq r$, there exists an $n$-dimensional $\bQ$-factorial normal projective variety $X$ with klt singularities and a Fano foliation $\sF\subsetneq T_X$ with algebraic rank $r^a=r$ such that $\epsilon(-K_{\sF})= c$.
	\end{enumerate}
\end{prop}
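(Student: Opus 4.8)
The plan is to produce all the required examples explicitly. Two mechanisms suffice: linear pull-backs of purely transcendental foliations on projective space, which make every invariant computable and already realise all integral values of $c$; and (possibly weighted) normal generalised cones in the sense of Example~\ref{e.generalised-cone}, combined with finite cyclic quotients, which supply the extra flexibility needed for arbitrary rational values while keeping $\bQ$-factoriality and klt-ness under control.

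For the smooth case (1) I would start from $\bP^{n}$ with the linear pull-back $\sF_{0}$ of a purely transcendental foliation $\sG_{0}$ of rank $s$ and degree $d$ on $\bP^{n-r}$ along a linear projection with $r$-dimensional fibres; such $\sG_{0}$ exist once $n-r\geq 2$ (for instance logarithmic foliations $\sum\lambda_{i}\,dx_{i}/x_{i}=0$ with generic residues), a linear pull-back has the same degree, so a standard computation gives $-K_{\sF_{0}}=\sO_{\bP^{n}}(s+r-d)$, and $\sF_{0}^{a}$ is the relative foliation, whence $r^{a}=r$. Choosing $s,d$ suitably — with obvious variants such as pencils of hypersurfaces when $n-r=1$ — this handles every integral $c\in\{1,\dots,r\}$. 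For non-integral $c$ I would instead blow up $\bP^{n}$ along a smooth centre $Z$ adapted to $\sF_{0}$ (taken invariant or generically transverse according to whether the numerator should decrease or increase), obtaining a smooth $X=\Bl_{Z}\bP^{n}$ with the transformed saturated foliation $\sF$: the algebraic part is unaffected, so $r^{a}=r$, while $-K_{\sF}$ is the pull-back of $\sO_{\bP^{n}}(m)$ twisted by a controlled multiple of the exceptional divisor. The Picard rank of $X$ is now at least two, and the supremum defining $\widehat{\iota}(\sF)$ returns a rational number with the prescribed denominator — the mechanism is already visible on $\Bl_{p}\bP^{2}$, where this supremum over ample Cartier classes takes half-integral values — so that tuning the data gives $\widehat{\iota}(\sF)=c$. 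Finally one exhibits an ample Cartier divisor $H$ with $-K_{\sF}\equiv\widehat{\iota}(\sF)H+P$, $P$ pseudoeffective, which moreover carries a line through a general point meeting it once, so that $\epsilon(H)=1$ and hence $\epsilon(\widehat{\iota}(\sF)H)=\widehat{\iota}(\sF)\epsilon(H)=\widehat{\iota}(\sF)=c$.

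For (2) and (3) I would take a polarised base $(T,\sL)$ of dimension $n-r$ carrying a purely transcendental foliation $\sG$ with $K_{\sG}\sim_{\bQ}0$ (which requires $n-r\geq 2$ and is produced from logarithmic foliations on $\bP^{n-r}$ and products thereof), form the normal generalised cone $X$ over $(T,\sL)$ with vertex $\bP^{r-1}$, and set $\sF\coloneqq h^{-1}\sG$ for the projection $h\colon X\dashrightarrow T$. Then $\sF^{a}$ is the ruling foliation, so $r^{a}=r$; one checks from the construction that $X$ is $\bQ$-factorial and klt; and a computation on the blow-up of the vertex shows that $-K_{\sF}$ is ample with $\iota(\sF)=\widehat{\iota}(\sF)$ and with $\epsilon(-K_{\sF})$ realised by a line of the ruling through a general smooth point. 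Varying $\deg\sL$, passing to weighted cones, and finally taking a finite cyclic quotient (which divides $\iota(\sF)$, respectively $\epsilon(-K_{\sF})$, by the order of the group while preserving the algebraic rank and the singularity type) then lets the value range over every rational number in the asserted interval. The bound $\min\{r,n-2\}$ in (2) is dictated by the construction: a base of dimension at least two is needed to support a genuinely transcendental foliation with $K_{\sG}\sim_{\bQ}0$, and the borderline case $r=n-1$ is treated separately using weighted cones over rational curves; for (3) only the Seshadri constant has to be matched, so the looser bound $c\leq r$ suffices.

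The step I expect to be the main obstacle is pinning down the Seshadri constant of $-K_{\sF}$ on the singular examples: one must prove that the evident ruling line through a general smooth point is the curve computing $\epsilon(-K_{\sF})$, i.e. that no curve through the vertex or through the quotient singularities gives a smaller ratio $(-K_{\sF}\cdot C)/\mult_{x}C$. On the smooth side the analogous delicate point is to choose the blow-up centre so that the transformed sheaf remains a saturated foliation with the predicted canonical class and so that $\widehat{\iota}(\sF)$ equals $c$ exactly, rather than merely being bounded above by it.
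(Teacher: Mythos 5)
Your proposal is in the right neighbourhood — linear pull-backs, generalised cones, weighted constructions — but two of the central steps do not work as written, and the third is a gap you acknowledge but do not close.

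For part (2) and part (3) the choice $K_{\sG}\sim_{\bQ}0$ is fatal. If $\sG$ is purely transcendental on a base $T$ with $K_{\sG}\sim_{\bQ}0$ and the vertex of the generalised cone is $\bP^{r-1}$, then the formula of Lemma~\ref{l.folia-generalised-cone} gives $\iota(\sF)=\widehat{\iota}(\sF)=\epsilon(-K_{\sF})=r-0/m=r$ regardless of the polarisation $\sL$; varying $\deg\sL$ changes nothing because the term $d/m$ vanishes identically. So your construction pins the invariant at the maximal value $r$ and cannot reach any $c<r$. The paper's fix is not a quotient: it varies the \emph{vertex dimension} to $r'=\lceil c\rceil\le r$, works over the base $\bP^{n-r'}$ (so $n-r'\ge 2$, which is exactly the source of the constraint $c\le n-2$), and uses a foliation $\sH$ on the base of algebraic rank $r-r'$ whose canonical class has a strictly positive degree $p$; then $\iota(\sF)=r'-p/q$ sweeps through every $c\le\min\{r,n-2\}$. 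Your cyclic-quotient idea — that taking $X/G$ divides $\iota(\sF)$ or $\epsilon(-K_{\sF})$ by $|G|$ while preserving klt-ness and the algebraic rank — is not established and is not obviously true: the Cartier index and ample generator can behave quite arbitrarily under a quotient, and klt-ness of $X/G$ is not automatic. The cases $r=n-1$ (part (3)) and $n=2$ really do require a different device; the paper uses explicit foliations on weighted projective spaces $\bP(1,a_1,\dots,a_n)$ induced by maps to $\bP^1$, with $\epsilon(-K_{\sF})$ computed from $\epsilon(\{x_0=0\})=1/a_n$.

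For part (1) your plan — "blow up $\bP^n$ along a suitable centre and tune" — is not a proof; you flag the key point yourself (that $\widehat{\iota}(\sF)$ must be computed exactly, not merely bounded above) and leave it open. The paper instead works on projective bundles $X=\bP_Z(\sO_Z(m)\oplus\bigoplus\sO_Z(-b_i))$ over $Z=\bP^{n-r}$ (or $\bP^{n-1}$ when $0<c<1$) and proves a closed-form formula for $\widehat{\iota}(D)$ of any big non-ample class $D\equiv\beta\Lambda+\gamma\pi^*A$, namely $\widehat{\iota}(D)=(\beta m+\gamma)/(m+b_1+1)$, using the explicit description of $\Nef(X)$ and $\Pseff(X)$. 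With that formula in hand, matching $\widehat{\iota}(\sF)=p/q$ becomes an elementary integer-parameter problem. Without such a formula (or some equivalent explicit control of the nef and pseudoeffective cones of the blown-up variety, which you do not provide), the claim that the supremum in the definition of $\widehat{\iota}$ hits $c$ exactly is unsupported. Note also that your claim $\epsilon(\widehat{\iota}(\sF)H)=\widehat{\iota}(\sF)\,\epsilon(H)$ is just linearity of the Seshadri constant in the divisor, which is fine, but $\epsilon(H)=1$ still needs an argument (the paper gets it from very ampleness plus a free rational curve of $H$-degree one).

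In short: the toolkit you reach for is the right one, but the parameters are mis-set (part (2)/(3): $K_{\sG}\sim_{\bQ}0$ forces $c=r$), the extra mechanisms you invoke (cyclic quotients) are unsubstantiated, and for part (1) the decisive computation is missing rather than carried out.
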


Our statement is actually a bit more general. For $n=2$, we will construct some foliations $\sF$ on surfaces in Example \ref{e.curve-foliations} to show that the rational numbers $c$ contained in the set $\{1-1/a\,|\,a\in\bZ_{\geq 1}\}$ can be realised as the generalised index of $\sF$ as in \ref{i.generalised-index}. In particular, this allows us to give a positive answer to a question proposed by Araujo and Druel on the generalised indices of foliations on projective manifolds (see Example \ref{e.curve-foliations}) and we refer the reader to \cite[Question 4.4]{AraujoDruel2019} or Question \ref{q.AD-question} for a precise statement. For $r=n-1$ and $n\geq 2$, we will construct some codimension one algebraically integrable foliations $\sF$ in Example \ref{e.codimension-foliation} to show that the rational numbers $c$ contained in the set $\{n-2+1/a\,|\,a\in\bZ_{\geq 1}\}$ can be realised as the Fano index of $\sF$ as in \ref{i.Fano-index}.

This paper is organised as follows. In Section \ref{s.Seshadri-Fano} we collect some results concerning Seshadri constant and Fano varieties with large indices or large Seshadri constants. In Section \ref{s.foliations} we introduce basic notions on foliations. In Section \ref{s.bounding-algebraic-rank} we study the positivity of the anti-canonical divisors of foliations and then apply it to prove Theorem \ref{t.Kobayashi-Ochiai-I}, Theorem \ref{t.Kobayashi-Ochiai-II}, Theorem \ref{c.upper-bound-Seshadri-weak-Fano} and Theorem \ref{c.RC-Folia-large-Seshadri}. In Section \ref{s.max-Seshadri} we prove Theorem \ref{t.Seshadri-MaxValue}. In Section \ref{s.Examples-Seshadri} we exhibit some examples and propose a few interesting questions. In particular we prove Proposition \ref{p.existence-examples}.

\subsection*{Acknowledgements} 

I am grateful to St{\'e}phane Druel and Andreas H{\"o}ring for their helpful comments. This work is supported by the National Key Research and Development Program of China (No. 2021YFA1002300), the NSFC grants (No. 12001521 and No. 12288201) and the CAS Project for Young Scientists in Basic Research (No. YSBR-033). I would like to thank the anonymous referee for his/her very detailed report which helps me to correct numerous inaccuracies and also to improve the exposition of the paper.
	
\section{Seshadri constant and Fano varieties}
\label{s.Seshadri-Fano}

Throughout this paper we work over the field of complex numbers $\bC$. We will frequently use the terminology and results of the minimal model program (MMP) as explained in \cite{KollarMori1998}. We refer to Lazarsfeld’s book \cite{Lazarsfeld2004} for notions of positivity of $\bR$-divisors, in particular \cite[\S\,5]{Lazarsfeld2004} for a general discussion on Seshadri constant.

\subsection{Basic properties of Seshadri constant}

In this subsection we briefly recall some basic properties of Seshadri constant.

\begin{lem}
	\label{l.properties-Seshadri}
	Suppose that $D$ is a nef $\bR$-Cartier $\bR$-divisor on a normal projective variety $X$.
	\begin{enumerate}
		\item\label{i.bigness-Seshadri} $D$ is big if and only if $\epsilon(D)>0$.
		
		\item\label{i.max-value-Seshadri} For a very general point $x\in X$, we have $\epsilon(D)=\epsilon(D,x)$.
		
		\item\label{i.pull-back-Seshadri} If $\pi:Y\rightarrow X$ is a birational morphism and $y\in Y$ is a smooth point such that $\pi$ is an isomorphism in a neighbourhood of $y$, then we have 
		\[
		\epsilon(\pi^*D,y)=\epsilon(D,\pi(y)).
		\]
		
		\item\label{i.restriction-Seshadri} Let $Y\subset X$ be a closed subvariety and $y\in Y$ be a smooth point of both $Y$ and $X$. Then we have
		\[
		\epsilon(D,y) \leq \epsilon(D|_Y,y).
		\]
	\end{enumerate}
\end{lem}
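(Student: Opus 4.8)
The plan is to handle the four items separately; \ref{i.pull-back-Seshadri} and \ref{i.restriction-Seshadri} are formal, whereas \ref{i.max-value-Seshadri} carries the real content. First note that $D|_Y$ and $\pi^{*}D$ are again nef. For \ref{i.restriction-Seshadri}, every irreducible curve $C\subseteq Y$ through $y$ satisfies $(D|_Y)\cdot C=D\cdot C$ by the projection formula, while $\mult_y C$ is intrinsic to the pair $(C,y)$ and does not see the ambient variety; hence the infimum defining $\epsilon(D|_Y,y)$ runs over a subfamily of the curves used for $\epsilon(D,y)$, giving $\epsilon(D,y)\le\epsilon(D|_Y,y)$. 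For \ref{i.pull-back-Seshadri}, put $x=\pi(y)$; since $\pi$ is an isomorphism over a neighbourhood of $y$, taking strict transforms gives a bijection between irreducible curves through $x$ and irreducible curves through $y$ under which the multiplicities at $x$ and $y$ agree and $\pi^{*}D\cdot C'=D\cdot\pi_{*}C'=D\cdot C$ by the projection formula ($C'$ being non-contracted with $\pi_{*}C'=C$), so the two infima coincide.

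For \ref{i.bigness-Seshadri}, suppose first that $D$ is nef and big; by Kodaira's lemma write $D\equiv A+E$ with $A$ an ample $\bR$-divisor and $E$ effective, and pick a smooth point $x\notin\Supp E$. Then $D\cdot C\ge A\cdot C\ge\epsilon(A,x)\,\mult_x C$ for every irreducible curve $C$ through $x$, so $\epsilon(D)\ge\epsilon(D,x)\ge\epsilon(A,x)>0$. Conversely, if $\varepsilon:=\epsilon(D,x)>0$ at a smooth point $x$, let $\pi\colon\wtilde X\to X$ be the blow-up of $x$, with exceptional divisor $E\cong\bP^{n-1}$, $n=\dim X$ (legitimate since $X$ is smooth near $x$). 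A direct check shows $\pi^{*}D-tE$ is nef for $0\le t<\varepsilon$; taking top self-intersections and using $(\pi^{*}D)^{n-k}\cdot E^{k}=0$ for $0<k<n$ together with $E^{n}=(-1)^{n-1}$ gives $0\le(\pi^{*}D-tE)^{n}=D^{n}-t^{n}$, so $D^{n}\ge\varepsilon^{n}>0$ and the nef divisor $D$ is big. (All of this is standard; see \cite[\S\,5.1]{Lazarsfeld2004}.)

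For \ref{i.max-value-Seshadri}, set $\nu:=\sup_x\epsilon(D,x)$, which is finite (test against a general complete-intersection curve through $x$, smooth there). I would run over the countably many irreducible components $\mathcal H_i$ of the Chow variety of $1$-cycles on $X$ — each projective — with universal cycle $\mathcal C_i\to\mathcal H_i$, on which the numerical class of $C_t$, and hence $d_i:=D\cdot C_t$, is constant; every irreducible curve on $X$ occurs as some $C_t$. For $i$ and an integer $m\ge1$ let $Z_{i,m}\subseteq X$ be the image of the incidence locus $\{(t,x):x\in\Supp C_t,\ \mult_x C_t\ge m\}\subseteq\mathcal H_i\times X$; this locus is closed because in a family of $1$-cycles of fixed class the condition $\mult_x C_t\ge m$ is stable under specialisation, so $Z_{i,m}$ is closed. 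One has $\{x:\epsilon(D,x)<\lambda\}\subseteq\bigcup_{d_i/m<\lambda}Z_{i,m}$ (a curve $C=C_t$ through $x$ with $D\cdot C<\lambda\,\mult_x C$ yields an integer $m$ with $d_i/m<\lambda$ and $x\in Z_{i,m}$), so it suffices to show $Z_{i,m}\subsetneq X$ whenever $d_i/m<\lambda<\nu$: then, for each rational $\lambda<\nu$, the locus $\{x:\epsilon(D,x)<\lambda\}$ lies in a countable union of proper closed subsets, whence $\epsilon(D,x)\ge\nu$, and therefore $\epsilon(D,x)=\nu=\epsilon(D)$, for $x$ outside a countable union of proper closed subsets. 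To prove $Z_{i,m}\subsetneq X$: were it all of $X$, then choosing a smooth point $x_0$ with $\epsilon(D,x_0)>\lambda$ we would find $t_0$ with $x_0\in\Supp C_{t_0}$ and $\mult_{x_0}C_{t_0}\ge m$; writing $C_{t_0}=\sum_j a_j\Gamma_j$ with $a_j\ge1$ and bounding $D\cdot\Gamma_j\ge\epsilon(D,x_0)\,\mult_{x_0}\Gamma_j$ for the components through $x_0$ and $D\cdot\Gamma_j\ge0$ for the rest, we get $d_i=D\cdot C_{t_0}\ge\epsilon(D,x_0)\,\mult_{x_0}C_{t_0}>\lambda m>d_i$, a contradiction.

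The crux is this last step. The two points I expect to require care are: showing that the incidence locus above is closed — i.e. the upper semicontinuity of the multiplicity at a point along degenerations of $1$-cycles in a Chow family — and keeping track of both the intersection number and the multiplicity across the possibly reducible and non-reduced limit cycle $C_{t_0}$. Once these are in hand the remaining bookkeeping is routine, and a reader who prefers may simply cite the corresponding statements from \cite[\S\,5.1]{Lazarsfeld2004}.
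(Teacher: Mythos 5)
Your proof is correct and follows the same standard route that the paper invokes by citation: Kodaira's lemma plus the blow-up criterion for \ref{i.bigness-Seshadri}, lower semi-continuity of the Seshadri function via a Chow-variety/countable-union argument for \ref{i.max-value-Seshadri}, and the formal projection-formula observations for \ref{i.pull-back-Seshadri} and \ref{i.restriction-Seshadri}. The paper simply defers \ref{i.bigness-Seshadri} and \ref{i.max-value-Seshadri} to \cite[\S 5.1]{Lazarsfeld2004} and calls \ref{i.pull-back-Seshadri}--\ref{i.restriction-Seshadri} immediate; you have spelled out those same proofs in detail.
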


\begin{proof}
	The statement \ref{i.bigness-Seshadri} follows from \cite[Proposition 5.1.9]{Lazarsfeld2004} and the statement \ref{i.max-value-Seshadri} follows from the lower semi-continuity of the Seshadri function $\epsilon(D,\cdot):X\rightarrow \bR_{\geq 0}$, see for instance \cite[Example 5.1.11]{Lazarsfeld2004}. The statements \ref{i.pull-back-Seshadri} and \ref{i.restriction-Seshadri} are direct consequences of the definition.
\end{proof}

\subsection{Negativity lemma}

Let $Y\rightarrow Z$ be a projective morphism of varieties and $D$ a $\bR$-Cartier $\bR$-divisor on $Y$. We say that $D$ is \emph{nef on the very general curves of $Y/Z$} if there is a countable union of proper closed subsets $W$ of $Y$ such that $D\cdot C\geq 0$ for any curve $C$ on $Y$ contracted over $Z$ satisfying $C\not\subseteq W$. We need the following general negativity lemma. 

\begin{lem}[\protect{Negativity Lemma I, \cite[Lemma 3.3]{Birkar2012}}]
	\label{l.negativity}
	Let $f:Y\rightarrow Z$ be a birational contraction and let $D$ be a $\bR$-Cartier $\bR$-divisor on $Y$ written as $D=D^+ - D^-$ with $D^+$, $D^-\geq 0$ having no common components. Assume that $D^-$ is $f$-exceptional and for each irreducible component $S$ of $D^-$, the restriction $-D|_S$ is nef on the very general curves of $S/Z$. Then $D^{-}=0$. In other words, the $\bR$-divisor $D$ is effective.
\end{lem}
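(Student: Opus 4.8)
My plan is to argue by induction on $d\coloneqq\dim Y$, reducing in stages to the case of normal surfaces, where the assertion is the classical negativity of the intersection form. Since the conclusion $D\geq 0$ is local on $Z$, I may cover $Z$ by affine opens and replace $Y$ by their preimages; this preserves all the hypotheses --- the countable bad locus attached to a component of $D^-$ restricts to a countable union of proper closed subsets, or that component disappears --- so I may assume $Z$ is affine and quasi-projective. Suppose for contradiction that $D^-\neq 0$ and fix a component $S$ of $D^-$ with $a\coloneqq\mult_S D^->0$; since $D^+$ and $D^-$ have no common component, $\mult_S D=-a<0$, and since $D^-$ is $f$-exceptional, $\codim_Z f(S)\geq 2$.

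The base case $d=1$ is vacuous, since $f$ is then an isomorphism of normal curves and $D^-=0$; the case with content is $d=2$. Here every component of $D^-$ is a curve contracted to a point of $Z$, so writing $D^-=\sum_p E_p$ according to the point $p$ to which the components are contracted, I pick $p$ with $E_p\neq 0$. The intersection matrix of the components of the fibre $f^{-1}(p)$ is negative definite on a normal surface (by Mumford's intersection pairing, or after passing to the minimal resolution), so $E_p^2<0$ and there is a component $C$ of $E_p$, hence of $D^-$, with $E_p\cdot C<0$. The remaining parts of $D^-$ lie over other points and are disjoint from $C$, so $D^-\cdot C<0$, while $D^+\cdot C\geq 0$ because $C$ is not a component of the effective divisor $D^+$. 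Therefore $D\cdot C=D^+\cdot C-D^-\cdot C>0$. But $C$ is a component of $D^-$, so $-D|_C$ is nef on the very general curves of $C/Z$; as $C$ is itself a curve contracted over $Z$ and is not contained in its (countable, zero-dimensional) bad locus, this forces $-D\cdot C\geq 0$, a contradiction.

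For the inductive step $d\geq 3$ I pass to hyperplane sections in two stages. First, for a very general hyperplane section $A$ of $Z$, set $Y_A\coloneqq f^{-1}(A)$: then $A$ and $Y_A$ are normal, $f|_{Y_A}\colon Y_A\to A$ is a birational contraction, $D^+|_{Y_A}$ and $D^-|_{Y_A}$ are effective with no common component, $D^-|_{Y_A}$ is $f|_{Y_A}$-exceptional, and --- the crucial point --- the nef hypothesis descends, because a countable union of proper closed subsets of a component $S$ meets $S\cap Y_A$ in a countable union of proper closed subsets, and a curve in $S\cap Y_A$ contracted over $A$ is contracted over $Z$. By the inductive hypothesis $D|_{Y_A}\geq 0$; since $S\cap Y_A$ is a divisor in $Y_A$ exactly when $\dim f(S)\geq 1$, letting $A$ vary gives $\mult_S D\geq 0$ for every component $S$ of $D^-$ with $\dim f(S)\geq 1$. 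So I may now assume every component of $D^-$ is contracted to a point. Second, I cut $Y$ itself by $d-2$ very general hyperplanes to a normal surface $T$ and let $g\colon T\to Z_T^{\nu}$ be the morphism induced by $f|_T$ to the normalisation of $f(T)$: then $g$ is a birational contraction of normal surfaces, $D^-|_T$ is $g$-exceptional (each component of $D^-$ lying over a point) and nonzero (its components survive as curves in $T$), its components remain controlled by the nef hypothesis, and $D^+|_T$, $D^-|_T$ have no common component --- so the surface case forces $D|_T\geq 0$, contradicting $D^-|_T\neq 0$.

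The step I expect to be the real obstacle is making the surface case fully rigorous when $Y$ is singular: one has to use Mumford's $\bQ$-valued intersection theory on normal surfaces (or descend to the minimal resolution, taking some care to preserve the decomposition $D=D^+-D^-$) and check both the identity $D\cdot C=D^+\cdot C-D^-\cdot C$ and the signs of its terms. A related subtlety, already apparent above, is that components of $D^-$ contracted to positive-dimensional subvarieties of $Z$ and those contracted to points behave oppositely under the two kinds of section --- the former stay contracted only after cutting $Z$, the latter only after cutting $Y$ --- which is why the reduction proceeds in the two described stages; and at each restriction one must confirm that the ``nef on very general curves'' condition genuinely survives.
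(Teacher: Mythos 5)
The paper cites \cite[Lemma~3.3]{Birkar2012} for this statement and gives no proof of its own, so there is no in-paper argument to compare against. Your proof is correct and is the standard hyperplane-section reduction to normal surfaces used for negativity lemmas of this kind (and is in the spirit of Birkar's own argument): the subtleties you flag explicitly — Mumford's $\bQ$-valued intersection pairing on a possibly singular normal surface, the persistence of the ``nef on very general curves'' hypothesis under each restriction, and the need for two separate rounds of cutting because components of $D^-$ mapping to points and those mapping to positive-dimensional loci behave oppositely — are precisely the points that require care, and your handling of each is sound.
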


\begin{lem}[\protect{Negativity Lemma II}]
	\label{l.generic-negativity}
	Let $g:Y\rightarrow Z$ be a fibration between normal projective varieties and $f:Y\rightarrow X$ a birational contraction to a $\bQ$-factorial normal projective variety $X$. Let $D$ be a $\bR$-Cartier $\bR$-divisor on $Y$ such that there exists an effective $\bR$-Cartier $\bR$-divisor $N$ such that $N$ does not dominate $Z$ and $D-N$ is movable. Let $E^+$ and $E^{-}$ be the $f$-exceptional effective $\bR$-divisors with no common components such that $D+E^+-E^-=f^*(f_*D)$. Then $E^-$ does not dominant $Z$.
\end{lem}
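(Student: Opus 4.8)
The plan is to reduce the statement to the classical Negativity Lemma I (Lemma \ref{l.negativity}) applied on a suitable common resolution, while carefully tracking which exceptional components dominate $Z$. First I would choose a common resolution: let $p\colon W\to Y$ be a birational morphism from a smooth projective variety such that the induced map $f\circ p\colon W\to X$ is a morphism, and write $g'=g\circ p\colon W\to Z$. Set $D_W=p^*D$ (well-defined since $D$ is $\bR$-Cartier on $Y$) and let $D_X=(f\circ p)_*D_W=f_*D$, which is $\bR$-Cartier because $X$ is $\bQ$-factorial. Consider the difference $\Delta\coloneqq D_W-(f\circ p)^*D_X$. Since $p$ and $f\circ p$ differ by the birational contraction $f$, the divisor $\Delta$ is $(f\circ p)$-exceptional; push forward by $f\circ p$ to confirm $(f\circ p)_*\Delta=0$. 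Writing $\Delta=\Delta^+-\Delta^-$ with $\Delta^\pm\geq 0$ having no common components, both are $(f\circ p)$-exceptional. The divisors $E^+$ and $E^-$ in the statement will be $f_*^{-1}$-images — more precisely $E^\pm = p_*\Delta^\pm$, after checking $p_*\Delta^\pm$ are effective and $f$-exceptional; the identity $D+E^+-E^-=f^*(f_*D)$ then follows by pushing the identity $D_W+\Delta^-=(f\circ p)^*D_X+\Delta^+$ forward along $p$.

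The nontrivial point — and the main obstacle — is to show $E^-$ (equivalently $\Delta^-$) does not dominate $Z$. This is where the hypothesis ``$N$ effective, $D-N$ movable, $N$ does not dominate $Z$'' enters, playing the role that a nefness hypothesis plays in Lemma \ref{l.negativity}. The idea is that a generic fibre of $g$ is disjoint from $\Supp N$, so on a very general fibre $F$ of $g'$ the restriction $D_W|_F$ agrees with $(D-N)_W|_F$ up to the pullback of the movable part, hence $D_W|_F$ is movable, in particular nef on very general curves of $F$. For each irreducible component $S$ of $\Delta^-$ that dominates $Z$, a general fibre-direction curve $C\subset S$ lying over a very general point of $Z$ is not contracted by $g'$ only in the horizontal directions but is contracted by $f\circ p$; one computes $-\Delta|_S\cdot C = (f\circ p)^*D_X\cdot C - D_W\cdot C = -D_W\cdot C\leq 0$ for very general such $C$ because $D_W$ is movable along fibres and $C$ can be taken to sweep out $S\cap F$ — contradicting the effectivity $\Delta^-\geq 0$ unless $\Delta^-|_S\cdot C=0$, forcing $S\subset\Supp\Delta^+$, impossible. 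Thus no component of $\Delta^-$ dominates $Z$, giving the claim; simultaneously, applying Lemma \ref{l.negativity} to $\Delta$ on $W\to X$ (its negative part $\Delta^-$ restricted to each component is nef on very general curves by the movability argument above) even shows $\Delta\geq 0$, so in fact $E^-=0$ — but since the statement only asks that $E^-$ not dominate $Z$, I would present the weaker conclusion and remark on the stronger one if needed.

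A few technical checks I would include along the way: (i) that $p_*$ of an effective $(f\circ p)$-exceptional divisor is an effective $f$-exceptional divisor, using that $p$ contracts no divisor dominating $X$; (ii) that ``movable'' is preserved under pullback by $p$ and under restriction to a very general fibre, for which I would cite the standard behaviour of movable $\bR$-divisors (e.g. via base loci, as in \cite[\S\,2]{Birkar2012} or \cite[\S\,11]{Lazarsfeld2004}); and (iii) that ``very general curve of $S/Z$'' sweeping out fibre components is legitimate, i.e. the countable union of bad subsets does not swallow a whole general fibre — this follows since $S$ dominates $Z$ and a very general point of $Z$ avoids the image of any fixed proper closed subset of $S$. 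I expect step (ii), matching up ``movable'' with ``nef on very general curves along fibres'', to be the subtle part that needs the most care.
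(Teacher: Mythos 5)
Your proposal takes a genuinely different route from the paper, but it does not go through as written. The paper's argument is a short divisor-level bookkeeping that never leaves $Y$ and never touches the geometry of $g$ directly: decompose $D=N+P$, apply Lemma~\ref{l.negativity} over $X$ to the movable part $P$ to get $P+E_P=f^*(f_*P)$ with $E_P\geq 0$, and for the non-movable part observe that in $N+E_N^+-E_N^-=f^*(f_*N)$ the right-hand side is effective and $E_N^\pm$ share no components, so $\Supp(E_N^-)\subseteq\Supp(N)$, which does not dominate $Z$ by hypothesis; adding the two identities and comparing positive and negative parts gives $E^-\leq E_N^-$.

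Your approach has several real gaps. First, a sign slip: pushing $D_W+\Delta^-=(f\circ p)^*D_X+\Delta^+$ forward along $p$ gives $E^+=p_*\Delta^-$ and $E^-=p_*\Delta^+$, not $E^\pm=p_*\Delta^\pm$; so the object you must control is $\Delta^+$, while your curve argument targets $\Delta^-$, and the inequality $-\Delta|_S\cdot C\leq 0$ you derive has the wrong sign to contradict anything. Second, the curve you need --- a curve $C\subset S$ sweeping out $S$ that lies in a fibre of $g'$ and is simultaneously contracted by $f\circ p$ --- need not exist: $S$ dominates $Z$ and $\dim(f\circ p)(S)\leq n-2$, so the restriction of $(f\circ p)\times g'$ to $S$ can be generically finite onto its image as soon as $\dim Z\geq 1$, and then no such family of curves sweeps out $S$. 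Third, your closing remark that the argument ``even shows $\Delta\geq 0$, so in fact $E^-=0$'' is false: take $N$ an $f$-exceptional prime divisor not dominating $Z$ and $P=0$; then $D=N$, $f_*D=0$, and $E^-=N\neq 0$. The whole point of the lemma is that $E^-$ may be nonzero but must avoid dominating $Z$, and the paper's decomposition $D=N+P$ is precisely what isolates $N$ as the only possible source of $E^-$ --- a separation your proof needs but never makes.
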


\begin{proof}
	There exist $f$-exceptional effective $\bR$-divisors $E_N^+$ and $E_N^-$ having no common components such that 
	\[
	N + E_N^+ -E_N^-= f^*(f_* N). 
	\]
	As $N$ does not dominate $Z$ and $f^*(f_*N)$ is effective, the effective $\bR$-divisor $E_N^-$ does not dominate $Z$. On the other hand, as $P\coloneqq D-N$ is movable, for any $f$-exceptional prime divisor $S$, the restriction $P|_S$ is pseudoeffective and hence $(P-f^*(f_*P))|_S$ is nef on the very general curves of $S/X$. Thus, by Lemma \ref{l.negativity}, there exists an effective $f$-exceptional $\bR$-divisor $E_P$ such that $P+E_P=f^*(f_*P)$. As a consequence, we have
	\[
	E^+ - E^{-} = f^*(f_*D) - D = f^*(f_*N) -N + f^*(f_*P) - P = E_N^+ - E_N^{-} +  E_P.
	\]
	This yields $E^-\leq E_N^-$ and hence $E^-$ does not dominate $Z$.
\end{proof}

\subsection{Weak log Fano varieties}

Firstly let us recall the Kobayashi-Ochiai's theorem in the singular setting which was proved by Araujo and Druel in \cite{AraujoDruel2014}. Recall that a \emph{pair} $(X,\Delta)$ consists of a normal projective variety $X$ and an effective $\bR$-divisor $\Delta$ such that $K_X+\Delta$ is $\bR$-Cartier.

\begin{thm}[\protect{\cite[Theorem 1.1]{AraujoDruel2014}}]
	\label{t.Kobayashi-Ochiai}
	Let $(X,\Delta)$ be an $n$-dimensional pair such that $-(K_X+\Delta)\sim_{\bQ} \iota A$ for an ample Cartier divisor $A$ on $X$ and $\iota\in \bQ$.
	\begin{enumerate}
		\item If $\iota>n$, then $n<\iota\leq n+1$, $X\cong \bP^n$ and $\deg(\Delta)<n+1-\iota$.
		
		\item If $\iota=n$, then either $X\cong \bP^n$ and $\deg(\Delta)=1$, or $\Delta=0$ and $X$ is isomorphic to a (possibly singular) quadric hypersurface in $\bP^{n+1}$.
	\end{enumerate}
	In particular, if $\iota\geq n$, then $(X,\Delta)$ is klt unless $(X,\Delta)\cong (\bP^n,H)$, where $H$ is a hyperplane in $\bP^n$.
\end{thm}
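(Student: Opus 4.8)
The plan is to reduce both statements to the geometry of rational curves on varieties with non-pseudoeffective canonical class --- using the characterisations of $\bP^n$ and of quadric hypersurfaces among such varieties, in the form valid for normal varieties whose general point is smooth (as in \cite{AraujoDruelKovacs2008}) --- and then to read off $\Delta$ and the klt property by explicit computation on $\bP^n$ and on quadrics. The case $n=1$ is disposed of by hand: a normal projective curve is smooth, so $X\cong\bP^1$, and $-(K_X+\Delta)\sim_{\bQ}\iota A$ with $\iota\geq 1$ leaves only $\deg A\in\{1,2\}$, each of which one checks gives exactly the listed possibilities. So assume $n\geq 2$.

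\emph{Producing a rational curve and the numerical bound.} Since $-(K_X+\Delta)$ is ample and $\Delta\geq 0$, the divisor $-K_X\sim_{\bQ}\iota A+\Delta$ is big, so $K_X$ is not pseudoeffective and $X$ is uniruled. Fix a general point $x\in X$; it lies in the smooth locus. Let $C\ni x$ be a rational curve of minimal $A$-degree in a covering family; then $C$ is free and contained in $X_{\mathrm{sm}}$, so $-K_X\cdot C$ is a positive integer and, by the bend-and-break bound, $-K_X\cdot C\leq n+1$. Since $A$ is ample Cartier, $A\cdot C\in\bZ_{>0}$ and $\Delta\cdot C\geq 0$, so
\[
\iota\ \leq\ \iota\,(A\cdot C)\ \leq\ -K_X\cdot C\ \leq\ n+1,
\]
which gives $\iota\leq n+1$. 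If in addition $\iota\geq n$, then $A\cdot C\geq 2$ would force $-K_X\cdot C\geq 2\iota\geq 2n>n+1$, a contradiction; hence $A\cdot C=1$, so $-K_X\cdot C>n$ in case (1) and $-K_X\cdot C\in\{n,n+1\}$ in case (2).

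\emph{Identifying $X$ and extracting $\Delta$.} The deformations of $C$ through $x$ form a covering family of free rational curves of $A$-degree $1$, hence of minimal degree; by the singular versions of the Cho--Miyaoka--Shepherd-Barron characterisation of projective space and of its quadric analogue, $-K_X\cdot C\geq n+1$ forces $X\cong\bP^n$, while $-K_X\cdot C=n$ forces $X$ to be a (possibly singular) quadric hypersurface in $\bP^{n+1}$. If $X\cong\bP^n$, then $A\cdot C=1$ on a line gives $A=\sO_{\bP^n}(1)$, hence $\Delta\sim_{\bQ}\sO_{\bP^n}(n+1-\iota)$ and $\deg\Delta=n+1-\iota$, which is $<1$ for $\iota>n$ and $=1$ for $\iota=n$; this yields statement (1) and the $\bP^n$-branch of (2). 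If $X$ is a quadric, its Fano index already equals $n$, so $-(K_X+\Delta)\sim_{\bQ}nA$ together with $\Delta\geq 0$ forces $A$ to be the hyperplane class and $\Delta=0$, the other branch of (2).

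\emph{The klt assertion and the main difficulty.} A quadric hypersurface of rank $\geq 3$ has canonical singularities, so $(X,0)$ is klt in the quadric case; and on $\bP^n$, for $0\leq\deg\Delta\leq 1$, every product of a coefficient of $\Delta$ with the multiplicity of the corresponding component along a blow-up centre is at most $\deg\Delta\leq 1\leq n$, so a routine discrepancy computation shows $(\bP^n,\Delta)$ is klt unless $\Delta$ is a single reduced hyperplane, i.e.\ unless $(X,\Delta)\cong(\bP^n,H)$. I expect the genuine difficulty to lie entirely in the identification step, namely in having at one's disposal, and applying correctly, the projective-space and quadric characterisations for $X$ merely normal with smooth general point rather than smooth; everything else is bookkeeping. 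If quoting those characterisations in exactly the needed generality proves awkward, a robust alternative is to first establish by a separate argument that $(X,\Delta)$ is klt --- for instance from the abundance of rational curves through a general point together with connectedness of the non-klt locus --- and then invoke the standard log-Fano versions of the Cho--Miyaoka--Shepherd-Barron theorem.
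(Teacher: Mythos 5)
Your proposal takes a genuinely different route from the paper for the bulk of the statement. The paper simply quotes \cite[Theorem 1.1]{AraujoDruel2014} for parts (1) and (2) and only supplies an argument for the ``In particular'' klt addendum, which it handles by observing that a normal quadric is a cone over a smooth quadric (hence klt) and by citing \cite[Lemma 2.2]{AraujoDruel2014} for the $\bP^n$ case. You instead reprove (1) and (2) from scratch via bend-and-break and rational-curve characterisations of $\bP^n$ and quadrics, and only then do the klt bookkeeping. The bend-and-break reduction ($A\cdot C=1$, $-K_X\cdot C\leq n+1$, etc.) is correct and is in the spirit of the original Araujo--Druel proof, so this is a legitimate alternative route; but it is also unnecessary effort, since the paper treats (1)--(2) as a black box.

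There are, however, two genuine issues. First, the step ``$-K_X\cdot C=n$ forces $X$ to be a (possibly singular) quadric'' is not an off-the-shelf result for $X$ merely normal with smooth general point: the known quadric characterisations of CMSB type (Miyaoka, Dedieu--H\"oring, etc.) are stated for smooth $X$ and typically require $\rho(X)=1$. You acknowledge this difficulty but leave it unresolved, and moreover you derive $\Delta=0$ only \emph{after} declaring $X$ a quadric. The logic can be repaired by reversing the order: from $\iota=n$, $A\cdot C=1$ and $-K_X\cdot C=n$ you get $\Delta\cdot C=0$, and since $C$ moves through a general point while $\Delta$ is effective this forces $\Delta=0$; then $-K_X\sim_{\bQ}nA$ and you are exactly in the setting of the classical singular Kobayashi--Ochiai theorem of Fujita and Maeda, which the paper explicitly references and which does apply to normal $X$. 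Second, your klt claim on $\bP^n$ is underjustified: bounding $\mult_Z\Delta\leq\deg\Delta\leq 1$ along blow-up centres $Z$ does \emph{not} by itself give klt (the boundary case $\Delta=H$ already saturates the same bound and is only lc), and there is no ``routine'' discrepancy computation hiding here; the paper invokes a dedicated lemma \cite[Lemma 2.2]{AraujoDruel2014}, which is what you should cite or reprove carefully rather than wave at.
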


\begin{proof}
	Note that $\Delta$ is a $\bQ$-divisor, according to \cite[Theorem 1.1]{AraujoDruel2014}, it remains to prove the last statement. If $X$ is isomorphic to a normal quadric hypersurface, then it is well-known that $X$ is isomorphic to a cone over a smooth quadric and hence it has only klt singularities (cf. Example \ref{e.generalised-cone}). On the other hand, if $X\cong \bP^n$, by \cite[Lemma 2.2]{AraujoDruel2014}, the pair $(\bP^n,\Delta)$ with $\deg(\Delta)\leq 1$ is klt unless $\Delta$ is a hyperplane in $\bP^n$.
\end{proof}

Next we collect some results from \cite{Zhuang2018a} on weak log Fano varieties $(X,\Delta)$ such that the anti-log canonical divisor $-(K_X+\Delta)$ has large Seshadri constant. Recall that a birational map $g:X\dashrightarrow Z$ is called a \emph{contraction} if the inverse map $g^{-1}:Z\dashrightarrow X$ does not contract any divisors in $Z$. We need the following simple observation.

\begin{lem}
	\label{l.factorisation-maps}
	Let $h:X\rightarrow Y$ be a birational morphism between normal projective varieties. Assume that there exists a birational contraction $g:X\dashrightarrow Z$ to a normal projective variety $Z$ such that $g^*A = h^*D$, where $A$ is an ample $\bR$-Cartier $\bR$-divisor on $Z$ and $D$ is a nef $\bR$-Cartier  $\bR$-divisor on $Y$. Then $g$ factors through $h$, i.e., there exists a birational morphism $f:Y\rightarrow Z$ such that $g=f\circ h$.
\end{lem}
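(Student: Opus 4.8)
The plan is to use the standard rigidity argument: a birational morphism $h : X \to Y$ factors through a morphism $f : Y \to Z$ precisely when every curve contracted by $h$ is also contracted by $g$. So the goal reduces to showing that if $C \subset X$ is an irreducible curve with $h(C)$ a point, then $g(C)$ is a point as well (here $g$ is only a rational map, so I first have to pass to a resolution — see below). Once this is established, one applies the rigidity lemma (e.g.\ \cite[Lemma 1.15]{Debarre2001} or \cite[Corollary 1.7]{Debarre2001}, or simply the fact that $Y$ is normal and $h$ has connected fibres, so $g \circ h^{-1}$ descends to a morphism) to obtain the desired $f : Y \to Z$ with $g = f \circ h$.

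First I would resolve the indeterminacy of $g$: choose a normal projective variety $W$ with birational morphisms $p : W \to X$ and $q : W \to Z$ such that $q = g \circ p$ (as rational maps), and after further blowing up I may assume $p$ also resolves so that all relevant maps are morphisms. Pulling back the hypothesis $g^*A = h^*D$ along $p$ gives $q^*A = p^*h^*D = (h\circ p)^*D$ in $N^1(W)_{\bR}$ (using that $g^*A$ makes sense as the pushforward-compatible pullback, and that $p$ is a birational contraction so $p_* p^* = \mathrm{id}$ on the relevant cycles). Now let $C \subset X$ be an irreducible curve contracted by $h$, and let $\wtilde C \subset W$ be its strict transform (or any curve in $p^{-1}(C)$ dominating $C$). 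Then $(h\circ p)^*D \cdot \wtilde C = 0$ because $h\circ p$ contracts $\wtilde C$, hence $q^*A \cdot \wtilde C = 0$. Since $A$ is ample on $Z$ and $q^*A$ is the pullback of an ample divisor, $q^*A \cdot \wtilde C = 0$ forces $q(\wtilde C)$ to be a point; equivalently $A \cdot q_*\wtilde C = 0$, and ampleness of $A$ gives $q_*\wtilde C = 0$, i.e.\ $q$ contracts $\wtilde C$. Therefore $g$ contracts $C$.

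With this in hand, the factorisation is formal: $h$ is a birational morphism between normal varieties, so its fibres are connected (Zariski's main theorem / Stein factorisation, since $h_*\sO_X = \sO_Y$), and we have just shown that $g$ (resolved on $W$) is constant on each fibre of $h$. Hence the composite rational map $g \circ h^{-1} : Y \dashrightarrow Z$ is defined at every point and is a morphism $f : Y \to Z$ with $f \circ h = g$; it is birational because $g$ and $h$ are. I expect the only delicate point to be bookkeeping with the rational map $g$ and the pullbacks of Weil $\bR$-divisors on singular varieties — one must make sure the identity $g^*A = h^*D$ is interpreted and manipulated consistently after passing to the resolution $W$ (in particular that $q^*A$ genuinely agrees with $(h\circ p)^*D$ and not merely up to an effective exceptional correction), but since both sides are being compared against curves $\wtilde C$ in the fibres of $h\circ p$, and the ampleness of $A$ only requires the intersection number $q^*A\cdot \wtilde C$ to be $\ge 0$ with equality detecting contraction, this causes no real trouble. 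Everything else is the classical rigidity argument.
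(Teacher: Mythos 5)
Your overall route — resolve $g$ on some $W\xrightarrow{p}X$, show every curve in a fibre of $h\circ p$ is contracted by $q=g\circ p$ using ampleness of $A$, then apply rigidity — is the same as the paper's. But the step you flag as a mere ``delicate point'' and then dismiss is in fact the whole content of the lemma, and your dismissal does not hold up.

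Concretely, from $g^*A=h^*D$ (which, for the rational map $g$, means $p_*(q^*A)=h^*D$) you may only conclude
\[
q^*A \;=\; p^*h^*D \;+\; E
\]
for some $p$-exceptional $\bR$-divisor $E$; there is no reason a priori that $E=0$. For a curve $\wtilde C$ in a fibre of $h\circ p$ you correctly get $(h\circ p)^*D\cdot\wtilde C=0$, and nefness of $q^*A$ gives $q^*A\cdot\wtilde C\ge 0$; but what you need is the \emph{equality} $q^*A\cdot\wtilde C=0$, i.e.\ $E\cdot\wtilde C\le 0$. Nefness alone supplies only the wrong inequality, so ``ampleness of $A$ only requires $q^*A\cdot\wtilde C\ge 0$ with equality detecting contraction'' is circular: you have $\ge 0$ for free, but detecting the contraction requires $=0$, which is exactly what is at stake. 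Without controlling $E$, a positive $E\cdot\wtilde C$ would make $q(\wtilde C)$ a curve and the argument collapses.

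The paper closes this gap by showing $E=0$ via two applications of the negativity lemma (Lemma~\ref{l.negativity}). First, since $E$ is $p$-exceptional and hence $(h\circ p)$-exceptional, and $-E|_S = (p^*h^*D - q^*A)|_S$ satisfies $(-(-E))|_S=E|_S$ is nef on very general curves of $S$ over $Y$ (because $q^*A$ is nef and $p^*h^*D$ vanishes on those curves), the negativity lemma gives $E\le 0$. Second — and this is where the hypothesis that $g$ is a \emph{birational contraction} is essential, as it forces $\Exc(p)\subseteq\Exc(q)$ — the divisor $E$ is also $q$-exceptional, and $-E|_S$ is nef on very general curves of $S$ over $Z$ (now because $p^*h^*D$ is nef and $q^*A$ vanishes on those curves), so the negativity lemma gives $E\ge 0$. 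Hence $E=0$, and only then does your intersection computation go through. You never invoke the birational-contraction hypothesis in your write-up, which is a further sign that the argument is incomplete: without it the second negativity-lemma application fails and one cannot conclude $E\ge 0$.
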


\begin{proof}
	Let $\mu:W\rightarrow X$ be a resolution of $g:X\dashrightarrow Z$ and denote by $\nu\coloneqq g\circ \mu: W \rightarrow Z$ the induced birational morphism. As $g^*A = h^*D$, there exists a unique $\mu$-exceptional divisor $E$ such that
	\[
	\nu^* A = \mu^* h^*D + E.
	\]
	As $A$ is ample and $E$ is $(h\circ \mu)$-exceptional, by the negativity lemma (cf. Lemma \ref{l.negativity}), we get $-E\geq 0$. On the other hand, as $D$ is nef and $E$ is $\nu$-exceptional, it follows from the negativity lemma again that $E\geq 0$ and hence $E=0$. In particular, as $A$ is ample and $D$ is nef, every curve contracted by $h\circ\mu:W\rightarrow Y$ is contracted by $\nu:W\rightarrow Z$. Thus, by rigidity result \cite[Lemma 1.15]{Debarre2001}, the morphism $\nu$ factors through $h\circ \mu$, i.e. there exists a birational morphism $f:Y\rightarrow Z$ such that $f\circ h \circ \mu = \nu = g\circ \mu$.
\end{proof}

\begin{thm}[\protect{\cite[Theorem 1.5 and Lemma 5.6]{Zhuang2018a}}]
	\label{t.Zhuang}
	Let $(X,\Delta)$ be an $n$-dimensional pair such that $A\coloneqq -(K_X+\Delta)$ is nef.
	\begin{enumerate}
		\item If $\epsilon(A)>n-1$, then $X$ is rationally connected.
		
		\item If $\epsilon(A)\geq n$ and $(X,\Delta)$ is not klt, then $\epsilon(A)=n$ and there exists a birational morphism $f:X\rightarrow \bP^n$ with a hyperplane $H$ in $\bP^n$ such that 
		\begin{center}
			$\Delta=f_*^{-1}H$\quad and\quad $K_X+\Delta=f^*(K_{\bP^n}+H)$.
		\end{center}
	\end{enumerate}
\end{thm}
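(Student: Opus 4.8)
The result is \cite[Theorem 1.5 and Lemma 5.6]{Zhuang2018a}, so the plan is to reduce to those statements and supply the small amount of extra bookkeeping they need. The one observation used in both parts is that \emph{bigness of $A$ is automatic}: since $\epsilon(A)>n-1\geq 0$ in (1) and $\epsilon(A)\geq n>0$ in (2), Lemma~\ref{l.properties-Seshadri}\,(\ref{i.bigness-Seshadri}) shows that $A=-(K_X+\Delta)$ is nef and big. With this in hand, part (1) is \cite[Theorem 1.5]{Zhuang2018a}.

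For part (2) the mechanism I would use is the following. Pick a very general point $x\in X$; then $\epsilon(A,x)=\epsilon(A)\geq n$ by Lemma~\ref{l.properties-Seshadri}\,(\ref{i.max-value-Seshadri}), and on the blow-up $\pi\colon\Bl_xX\to X$ with exceptional divisor $E$ the class $\pi^*A-\epsilon(A,x)E$ is nef, whence $A^n\geq\epsilon(A)^n\geq n^n$. Consequently, for each $s<n$ the nef class $\pi^*A-sE$ has positive top self-intersection and is therefore big; pushing forward an effective representative produces an effective $\bR$-divisor $D\equiv A$ with $\mult_x D\geq s$. One then plays these near-maximal-multiplicity divisors at a general point against the non-klt locus of $(X,\Delta)$ — which is non-empty by hypothesis and whose geometry is controlled by the connectedness principle, applicable since $-(K_X+\Delta+tD)\equiv(1-t)A$ is nef and big for $0<t<1$ — and is forced into the extremal situation. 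This is the content of \cite[Lemma 5.6]{Zhuang2018a}: it yields $\epsilon(A)=n$ and produces a birational contraction $g\colon X\dashrightarrow\bP^n$ together with a hyperplane $H\subset\bP^n$ such that $\Delta=g_*^{-1}H$ and $g^*\sO_{\bP^n}(1)=\tfrac1n A$.

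It then remains to promote $g$ to a morphism and to read off the crepant identity. Since $\tfrac1n A$ is a nef $\bR$-Cartier divisor on $X$, I would apply Lemma~\ref{l.factorisation-maps} to $g\colon X\dashrightarrow\bP^n$, with $h=\Id_X\colon X\to X$, with $\sO_{\bP^n}(1)$ in the role of the ample divisor on the target and with $\tfrac1n A$ in the role of the nef divisor on $X$; its hypothesis holds because $g^*\sO_{\bP^n}(1)=\tfrac1n A$. Hence $g$ factors through $\Id_X$, i.e.\ $g$ is a birational morphism $f\colon X\to\bP^n$, so that $f^*\sO_{\bP^n}(1)=\tfrac1n A$, and since $K_{\bP^n}+H\sim\sO_{\bP^n}(-n)$ we obtain $f^*(K_{\bP^n}+H)=-n\,f^*\sO_{\bP^n}(1)=-A=K_X+\Delta$, the asserted relation.

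The genuinely hard step is the extremal analysis packaged in \cite[Lemma 5.6]{Zhuang2018a}: converting ``large Seshadri constant plus a non-klt place'' into the rigid conclusion $(X,\Delta)\cong(\bP^n,\text{hyperplane})$ requires a delicate study of minimal non-klt centres and of how the multiplicity-$\approx n$ divisors at a general point cut them down. If one simply cites Zhuang there, the only remaining subtlety on our side is to check that his birational contraction pulls $\sO_{\bP^n}(1)$ back to the \emph{nef} class $\tfrac1n A$ on $X$ — not merely to a movable or pseudoeffective class — since that is exactly what makes Lemma~\ref{l.factorisation-maps} applicable and hence what turns $g$ into an honest morphism.
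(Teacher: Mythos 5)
Your proposal matches the paper's proof: both reduce part (1) to Zhuang's Theorem 1.5 and part (2) to Zhuang's Lemma 5.6, and both then promote the resulting birational contraction to an honest morphism via the same application of Lemma~\ref{l.factorisation-maps} with $h=\Id_X$ and the nef divisor being a positive multiple of $A$. The only point the paper records that you omit is the remark that Zhuang's statements are written for effective $\bQ$-divisors $\Delta$ while the present statement allows effective Weil $\bR$-divisors, with the observation that his proofs go through unchanged; worth a sentence, but not a gap.
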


\begin{proof}
	The first statement follows from \cite[Theorem 1.5]{Zhuang2018a}. Here we note that the boundary $\Delta$ is assumed to be an effective $\bQ$-divisor in \cite[Theorem 1.5]{Zhuang2018a}, however the proof still works for $\Delta$ being an effective $\bR$-divisor. To prove the second statement, note that it follows from \cite[Lemma 5.6]{Zhuang2018a} that such a birational map $f:X\dashrightarrow \bP^n$ exists as a rational map. We remark again that the proof given there also works for the boundary $\Delta$ being a $\bR$-divisor. Since $-(K_{\bP^n}+H)$ is ample, applying Lemma \ref{l.factorisation-maps} to $\Id:X\rightarrow X$ and $f:X\dashrightarrow \bP^n$ yields that $f$ is actually a morphism.
\end{proof}

\section{Foliations}
\label{s.foliations}

In this section we collect and recall basic facts concerning foliations.

\subsection{Basic notions of foliations}

Let $\sF$ be a coherent sheaf over a normal variety $X$. Then \emph{rank} $r$ of $\sF$ is defined to be its rank at a general point of $X$. The dual sheaf $\sHom(\sF,\sO_X)$ will be denoted by $\sF^*$. The \emph{reflexive hull} of $\sF$ is defined as $\sF^{**}$ and $\sF$ is called reflexive if $\sF=\sF^{**}$. Given a positive integer $m$, we denote by $\wedge^{[m]}\sF$ the reflexive sheaf $(\wedge^m\sF)^{**}$ and by $\otimes^{[m]}\sF$ the reflexive sheaf $(\otimes^m\sF)^{**}$. In particular, the \emph{determinant} $\det(\sF)$ is defined as $\wedge^{[r]}\sF$. If $\pi:Y\rightarrow X$ is a morphism of varieties, then we write $\pi^{[*]}\sF$ for $(\pi^*\sF)^{**}$. If $X\rightarrow B$ is a morphism, we denote by $\Omega_{X/B}$ the relative K{\"a}hler differential and by $T_{X/B}$ the dual sheaf $\Omega_{X/B}^*$. Moreover, for simplicity, we will write $\Omega^r_{X/B}$ (resp. $\Omega^{[r]}_{X/B}$) instead of $\wedge^r\Omega_{X/B}$ (resp. $\wedge^{[r]}\Omega_{X/B}$).

\begin{defn}
	Let $X$ be a normal variety. A foliation on $X$ is a non-zero coherent subsheaf $\sF\subsetneq T_X$ satisfying
	\begin{enumerate}
		\item $\sF$ is saturated in $T_X$, i.e. $T_X/\sF$ is torsion free, and
		
		\item $\sF$ is closed under the Lie bracket.
	\end{enumerate}
    The canonical class $K_{\sF}$ of $\sF$ is any Weil divisor on $X$ such that $\sO_X(-K_{\sF})\cong \det(\sF)$.
\end{defn}

	Let $\sF\subsetneq T_X$ be a foliation of rank $r$ on a normal variety $X$. Then we have a natural morphism $\Omega^{r}_X\rightarrow \det(\sF^*)\cong \sO_X(K_{\sF})$, which induces a morphism 
	\[
	\eta:(\Omega^r_X\otimes \sO_X(-K_{\sF}))^{**} \rightarrow \sO_X.
	\]
	The \emph{singular locus} $\sing(\sF)$ of $\sF$ is defined to be the closed subscheme of $X$ whose ideal sheaf $\sI_S$ is the image of $\eta$, see \cite[Definitnion 3.4]{AraujoDruel2014} for more details. In particular, if $\sing(\sF)=\emptyset$, then $\sF$ is said to be \emph{regular}.
	
	Let $\varphi:X\dashrightarrow Y$ be a dominant rational map with connected fibres between normal varieties. Let $X^o\subset X$ and $Y^o\subset Y$ be the smooth open subsets such that the restriction of $\varphi$ to $X^o$ induces a dominant morphism $\varphi|_{X^o}=\varphi^o:X^o\rightarrow Y^o$. Let $\sG$ be a foliation on $Y$. The \emph{pull-back of $\sG$ via $\varphi$} is defined as the unique foliation $\sF$ on $X$ such that $\sF|_{X^o}=(d\varphi^o)^{-1}(\sG|_{Y^o})$. In this case we write $\sF=\varphi^{-1}\sG$. 

\begin{defn}[\protect{\cite[Lemma 2.4]{LorayPereiraTouzet2018} and \cite[Definition 2.4]{AraujoDruel2019}}]
	\label{d.algebraic-part}
	Let $X$ be a normal variety and let $\sF\subsetneq T_X$ be a foliation on $X$. Then there exists a normal variety $Y$, unique up to birational equivalence, a foliation $\sG$ on $Y$ and a dominant rational map $\varphi: X\dashrightarrow Y$ with connected fibres satisfying the following conditions:
	\begin{enumerate}
		\item the foliation $\sG$ is purely transcendental, i.e., there is no algebraic subvariety with positive dimension through a general point of $Y$ that is tangent to $\sG$;
		
		\item the foliation $\sF$ is the pull-back of $\sG$ via $\varphi$. 
	\end{enumerate}
    The foliation $\sF^a$ on $X$ induced by the rational map $\varphi: X\dashrightarrow Y$ is called the algebraic part of $\sF$. The algebraic rank of $\sF$ is defined as the rank of $\sF^a$ and we will denote it by $r^a$. Moreover, if $r^a=r$, then we say that the foliation $\sF$ is algebraically integrable. The foliation $\sG$ on $Y$ is called the transcendental part of $\sF$.
\end{defn}

Let $X$ be a normal projective variety and let $\sF$ be an algebraically integrable foliation on $X$ of rank $r>0$ such that $K_{\sF}$ is $\bQ$-Cartier. Let $i:F\rightarrow X$ be the normalisation of the closure of a general leaf of $\sF$. By \cite[Definition 3.11]{AraujoDruel2014}, there is a canonically defined effective $\bQ$-divisor $\Delta_F$ on $F$ such that $K_F+\Delta_F\sim_{\bQ} i^*K_{\sF}$. This pair $(F,\Delta_F)$ is called a \emph{general log leaf} of $\sF$. Let $T'$ be the unique proper subvariety of the Chow variety of $X$ whose general point parametrises the closure of  a general leaf of $\sF$ (viewed as a reduced and irreducible cycle in $X$). Let $T$ be the normalisation of $T'$ and $U\rightarrow T'\times X$ the normalisation of the universal cycle, with induced morphisms:
\begin{equation}
	\label{e.family-leaves}
	\begin{tikzcd}[row sep=large, column sep=large]
		U \arrow[r,"\nu"] \arrow[d,"\pi" left]
		& X \\
		T
		&
	\end{tikzcd}
\end{equation}
Then $\nu:U\rightarrow X$ is birational and, for a general point $t\in T$, the image $\nu(\pi^{-1}(t))\subsetneq X$ is the closure of a leaf of $\sF$. We shall call the diagram \eqref{e.family-leaves} the \emph{family of leaves} of $\sF$, see \cite[Lemma 3.9]{AraujoDruel2014}. Let $m$ be the Cartier index of $K_{\sF}$. Thanks to \cite[Lemma 3.7 and Remark 3.12]{AraujoDruel2014}, the foliation $\sF$ induces a natural generically surjective morphism
\begin{equation}
	\label{e.Pfaffian-fields}
	\otimes^m\Omega_{U/T}^r \longrightarrow \nu^*\sO_X(mK_{\sF}).
\end{equation}
Let $\sF_U$ be the algebraically integrable foliation on $U$ induced by $\sF$, or equivalently by $\pi$. Then we have a natural isomorphism
\[
\Omega_{U/T}^{[r]} \longrightarrow \sO_{U}(K_{\sF_U}).
\]
As $\nu^{*}\sO_X(mK_{\sF})$ is invertible, after taking reflexive hull of $\otimes^m\Omega_{U/T}^r$, the morphism \eqref{e.Pfaffian-fields} yields
\begin{equation}
	\label{e.pull-back-Pfaffian}
	\otimes^m \Omega_{U/T}^r \rightarrow \otimes^{[m]}\Omega_{U/T}^r \rightarrow \otimes^{[m]}\Omega_{U/T}^{[r]} \xrightarrow{\cong} \sO_U(mK_{\sF_U}) \rightarrow \nu^*\sO_X(mK_{\sF}).
\end{equation}
In particular, there exists a canonically defined effective Weil $\bQ$-divisor $\Delta$ on $U$ such that
\[
K_{\sF_U} + \Delta \sim_{\bQ} \nu^*K_{\sF}.
\]
Then $\Delta$ is $\nu$-exceptional as $\nu_*K_{\sF_U}=K_{\sF}$. Moreover, for a general point $t\in T$, set $U_t\coloneqq \pi^{-1}(t)$ and $\Delta_t\coloneqq \Delta|_{U_t}$. Then $(U_t,\Delta_t)$ coincides with the general log leaf $(F,\Delta_F)$ defined above. Here we note that $\Delta$ is $\bQ$-Cartier along codimension one points of $U_t$ and thus the restriction $\Delta|_{U_t}$ is well-defined as $\bQ$-divisor.

\subsection{Log algebraic part of a general leaf}
\label{s.log-algebraic-part}

Let $X$ be a $\bQ$-factorial normal projective variety and let $\sF\subsetneq T_X$ be a foliation with algebraic rank $r^a>0$. Let $\pi:U\rightarrow T$ be the family of leaves of $\sF^a$. Let $\mu:Z\rightarrow T$ be a resolution. Since $\pi:U\rightarrow T$ is equidimensional and $T$ is normal, by Chevalley's criterion \cite[Corollaire 14.4.4]{Grothendieck1966}, the morphism $\pi$ is actually universally open. In particular, since the general fibres of $\pi$ are irreducible, it follows that the fibre product $U\times_T Z$ is again irreducible. Denote by $Y$ the normalisation of $U\times_T Z$. Let $\rho:Y\rightarrow X$ and $q:Y\rightarrow Z$ be the induced morphisms. Then $q$ is also universally open and we have the following commutative diagram:
\[
\begin{tikzcd}[column sep=large, row sep=large]
	Y \arrow[r] \arrow[rr, bend left, "\rho"] \arrow[d,"q" left]
	    & U \arrow[d,"\pi"] \arrow[r,"\nu"]
	        & X \\
	Z \arrow[r,"\mu"]
	    & T
	        & 
\end{tikzcd}
\]
Let $\sG$ be the unique foliation on $Z$ such that $q^{-1}\sG = \rho^{-1}\sF$; that is, the foliation $\sG$ is the transcendental part of $\rho^{-1}\sF$. Denote by $\sQ$ the reflexive hull of $\sF/\sF^a$. Then clearly we have $\rho_*q^*K_{\sG}=K_{\sQ}$, where $K_{\sQ}$ is a Weil divisor on $X$ such that $\sO_X(-K_{\sQ})\cong \det(\sQ)$. In particular, there exists a canonically defined $\rho$-exceptional $\bQ$-divisor $E_{\sG}$ on $Y$ such that
\[
q^*K_{\sG} + E_{\sG}= \rho^*(\rho_*q^*K_{\sG}) = \rho^*K_{\sQ}.
\]
Denote by $\sH$ the pull-back foliation $\rho^{-1}\sF^a$, or equivalently the algebraically integrable foliation induced by $q$. Let $E_{\sF^a}$ be the canonically defined $\rho$-exceptional effective $\bQ$-divisor on $Y$ such that 
\[
K_{\sH} + E_{\sF^a} \sim_{\bQ} \rho^*K_{\sF^a}.
\]
Then we obtain
\begin{equation}
	\label{e.Canonical-bundle}
	K_{\sH} + E_{\sF^a} + E_{\sG} \sim_{\bQ} \rho^*(K_{\sF} - K_{\sQ}) + E_{\sG} \sim_{\bQ} \rho^*K_{\sF} - q^*K_{\sG}.
\end{equation}

We need the following remarkable result of Campana and P{\u{a}}un and we refer to \cite[Theorem 1.1]{CampanaPaun2019} and Theorem \ref{t.CP-Thm} for more details.

\begin{thm}[\protect{\cite{CampanaPaun2019}}]
	\label{t-CP-simplified}
	Let $X$ be a normal projective variety and let $\sF\subsetneq T_X$ be a purely transcendental foliation such that $K_{\sF}$ is $\bQ$-Cartier. Then $K_{\sF}$ is pseudoeffective.
\end{thm}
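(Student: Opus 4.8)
The plan is to derive \ref{t-CP-simplified} by contraposition from the Campana--P\u{a}un positivity theorem \cite{CampanaPaun2019}; in essence the statement is just the ``purely transcendental'' reformulation of that result, so the only real work is a bookkeeping reduction to the case of a smooth ambient variety.

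First I would replace $X$ by a resolution $\mu\colon X'\to X$ and $\sF$ by the pulled-back foliation $\sF'\coloneqq\mu^{-1}\sF$. This is harmless for two reasons. The formation of the algebraic and transcendental parts in Definition \ref{d.algebraic-part} is manifestly a birational notion, so $\sF'$ is again purely transcendental (its algebraic rank is still $0$): an algebraic subvariety of positive dimension through a general point of $X'$ tangent to $\sF'$ would push forward to one through a general point of $X$ tangent to $\sF$. And since $\sF'$ agrees with $\sF$ over the open locus on which $\mu$ is an isomorphism, the Weil divisor $K_{\sF'}-\mu^*K_{\sF}$ is $\mu$-exceptional, so $\mu_*K_{\sF'}=K_{\sF}$; as the push-forward of a pseudoeffective class is pseudoeffective, it suffices to show that $K_{\sF'}$ is pseudoeffective. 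Thus we may assume that $X$ is a projective manifold.

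Now suppose for contradiction that $K_{\sF}$ is not pseudoeffective. By the theorem of Campana and P\u{a}un \cite[Theorem 1.1]{CampanaPaun2019} (see \ref{t.CP-Thm}), a foliation on a projective manifold whose canonical class is not pseudoeffective has strictly positive algebraic rank: its algebraic part $\sF^a$ is non-zero, and indeed a general leaf of $\sF^a$ is rationally connected. This contradicts the hypothesis that $\sF$ is purely transcendental, i.e.\ that $\sF^a=0$, and hence $K_{\sF}$ must be pseudoeffective. The substantive content of the statement is absorbed entirely into the Campana--P\u{a}un theorem, which is invoked here as a black box (its proof combines the algebraicity criteria of Bogomolov--McQuillan and Bost with positivity estimates for direct images); within the argument above the only step needing genuine care is the reduction of the second paragraph, and even that can be dispensed with if the form of the theorem recorded as \ref{t.CP-Thm} already allows $X$ to be merely normal with $K_{\sF}$ $\bQ$-Cartier, in which case the proof is a single line.
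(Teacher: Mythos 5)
The paper itself offers no proof of this statement; it is cited from \cite{CampanaPaun2019}, with Theorem~\ref{t.CP-Thm} recorded later as the full slope form of the same reference. Your first paragraph (resolve $X$, pull back $\sF$, push forward pseudoeffectivity) is a correct and necessary reduction, since Theorem~\ref{t.CP-Thm} is stated for $\bQ$-factorial normal varieties while the present statement only assumes $K_{\sF}$ $\bQ$-Cartier.

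There is, however, a genuine gap in the second paragraph. Theorem~\ref{t.CP-Thm}, which is what the paper records as \cite[Theorem~1.1]{CampanaPaun2019}, does \emph{not} assert that non-pseudoeffectivity of $K_{\sF}$ forces $\sF^a\neq 0$; its hypothesis is $\mu_\alpha^{\min}(\sF)>0$ for some movable class $\alpha$. If $K_{\sF}$ fails to be pseudoeffective, what you get directly is a movable $\alpha$ with $K_{\sF}\cdot\alpha<0$, i.e.\ $\mu_\alpha(\sF)>0$, hence $\mu_\alpha^{\max}(\sF)>0$ --- a statement about the \emph{maximal} slope, not the minimal one, and Theorem~\ref{t.CP-Thm} cannot be applied to $\sF$ itself. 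The missing step is to pass to a subfoliation: take the maximal destabilising subsheaf $\sF_1$ (equivalently the positive part $\sF_\alpha^+$) of the Harder--Narasimhan filtration with respect to $\alpha$. By Lemma~\ref{l.integrability} this is again a foliation, and since $\sF_1$ is $\alpha$-semistable with $\mu_\alpha(\sF_1)=\mu_\alpha^{\max}(\sF)>0$ one has $\mu_\alpha^{\min}(\sF_1)>0$ by \eqref{e.min-max-equality}. Now Theorem~\ref{t.CP-Thm} applies to $\sF_1$, producing a nonzero algebraically integrable subfoliation of $\sF$ and hence $\sF^a\neq 0$, contradicting pure transcendence. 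This is exactly the argument the paper gives in Lemma~\ref{l.uniruledness-criterion}; the implication you attribute to the black box is a consequence of \cite{CampanaPaun2019}, not its literal statement, and the two should not be conflated.
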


\begin{lem}
	\label{l.negative-components}
	Write $E_{\sG} = E_{\sG}^+ - E_{\sG}^-$ such that $E_{\sG}^+$, $E_{\sG}^-\geq 0$ with no common components. Then $E_{\sG}^-$ does not dominate $Z$.
\end{lem}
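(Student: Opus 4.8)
The plan is to derive this from the Negativity Lemma~II (Lemma~\ref{l.generic-negativity}), applied to the fibration $q\colon Y\to Z$ and the birational contraction $\rho\colon Y\to X$, with the choice $D\coloneqq q^*K_{\sG}$. Since $\rho_*q^*K_{\sG}=K_{\sQ}$, that lemma --- once its hypothesis has been checked --- produces effective $\rho$-exceptional Weil $\bR$-divisors $E^+$ and $E^-$ without common components such that $E^-$ does not dominate $Z$ and $q^*K_{\sG}+E^+-E^-=\rho^*(\rho_*q^*K_{\sG})=\rho^*K_{\sQ}$. Comparing with the defining relation $q^*K_{\sG}+E_{\sG}=\rho^*K_{\sQ}$ yields $E_{\sG}=E^+-E^-$, and the uniqueness of the decomposition of a Weil $\bR$-divisor into positive and negative parts without common components forces $E_{\sG}^-=E^-$. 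This is exactly the assertion, so everything reduces to verifying the hypothesis of Lemma~\ref{l.generic-negativity}: one needs an effective $\bR$-divisor $N$ on $Y$ which does not dominate $Z$ and for which $q^*K_{\sG}-N$ is movable.

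To produce such an $N$, I would use that $\sG$ is purely transcendental while $Z$ is smooth, so $K_{\sG}$ is Cartier and, by the theorem of Campana--P\u{a}un (Theorem~\ref{t-CP-simplified}), pseudoeffective. Take the divisorial Zariski decomposition $K_{\sG}=P_\sigma(K_{\sG})+N_\sigma(K_{\sG})$ on the smooth projective variety $Z$, where $N_\sigma(K_{\sG})$ is an effective $\bR$-divisor and $P_\sigma(K_{\sG})$ lies in the movable cone, and set $N\coloneqq q^*N_\sigma(K_{\sG})$. Then $N$ is effective, its support lies over $\Supp N_\sigma(K_{\sG})\subsetneq Z$, so $N$ does not dominate $Z$, and $q^*K_{\sG}-N=q^*P_\sigma(K_{\sG})$.

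What remains --- and what I expect to be the only real obstacle --- is to see that $q^*P_\sigma(K_{\sG})$ is still movable on $Y$. The key input is that $q$ is equidimensional, which it is, being obtained by base change and normalisation from the equidimensional family of leaves $\pi\colon U\to T$: then the preimage under $q$ of any closed subset of codimension at least two in $Z$ again has codimension at least two in $Y$, and since the stable base locus of $q^*B$ is contained in $q^{-1}$ of the stable base locus of $B$ for every divisor $B$ on $Z$, the map $q^*$ carries the movable cone of $Z$ into that of $Y$. Applying this to $P_\sigma(K_{\sG})$ finishes the verification; the remaining ingredients --- that $\rho$ is a birational contraction, that $X$ is $\bQ$-factorial, and that $q$ is a fibration between normal projective varieties --- are part of the standing setup, so the rest is bookkeeping.
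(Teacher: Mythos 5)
Your proposal is correct and follows essentially the same route as the paper's proof: both invoke Campana--P\u{a}un to get pseudoeffectivity of $K_{\sG}$, take the divisorial Zariski decomposition, use equidimensionality of $q$ to conclude that the pull-back of the movable part stays movable, and then apply Lemma~\ref{l.generic-negativity} to $D=q^*K_{\sG}$. You spell out more explicitly why $q^*$ carries the movable cone of $Z$ into that of $Y$ and why uniqueness of the positive/negative decomposition identifies $E_{\sG}^-$ with the $E^-$ produced by the lemma, but the argument is the same.
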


\begin{proof}
	Since $\sG$ is purely transcendental, $K_{\sG}$ is pseudoeffective by Theorem \ref{t-CP-simplified}. Let $K_{\sG}=N+P$ be the divisorial Zariski decomposition \cite{Boucksom2004,Nakayama2004}. Since $q$ is equidimensional, the pull-back $q^*P$ is movable. Then the result follows from Lemma \ref{l.generic-negativity}.
\end{proof}

Let $F$ be a general fibre of $q$. Set $D_F=(E_{\sF^a}+E_{\sG})|_F$ and $\Delta_F=E_{\sF^a}|_F$. Then the pair $(F,\Delta_F)$ is nothing but the general log leaf of $\sF^a$ and  Lemma \ref{l.negative-components} above says that $D_F$ is an effective $\bQ$-divisor such that $D_F\geq \Delta_F$ and
\[
K_F + D_F \sim_{\bQ} (K_{\sH}+E_{\sF^a}+E_{\sG})|_F \sim_{\bQ} \rho^*K_{\sF}|_F.
\]
Moreover, one can easily see that $D_F=\Delta_F$ if and only if $K_{\sQ}|_F\equiv 0$; that is, the $\bQ$-Cartier divisor $K_{\sQ}$ is numerically trivial along the closure of general leaves of $\sF^a$. We will call the pair $(F,D_F)$ \emph{the log algebraic part of a general leaf} of $\sF$. Note that if $\sF$ is algebraically integrable, then we have $D_F=\Delta_F$ and the pair $(F,D_F)$ is exactly the log leaf of $\sF$.

\section{Bounding the algebraic rank}
\label{s.bounding-algebraic-rank}

In this section we will study the lower bounds of algebraic ranks of Fano foliations and the goal is to prove Theorem \ref{t.Kobayashi-Ochiai-I}, Theorem \ref{t.Kobayashi-Ochiai-II}, Theorem \ref{c.upper-bound-Seshadri-weak-Fano} and Theorem \ref{c.RC-Folia-large-Seshadri}. 

\subsection{Positivity of anti-canonical divisors of foliations}

We need the following theorem due to Araujo and Druel.

\begin{thm}[\protect{\cite[Theorem 5.1]{AraujoDruel2013}}]
	\label{t.AD-fibration}
	Let $X$ be a normal projective variety and let $f:X\rightarrow C$ be a surjective morphism with connected fibres onto a smooth curve. Let $\Delta^+$ and $\Delta^{-}$ be effective $\bQ$-divisors on $X$ with no common components such that $f_*\sO_{X}(k\Delta^-)=\sO_C$ for every nonnegative integer $k$. Set $\Delta=\Delta^+ - \Delta^-$ and assume that $K_X+\Delta$ is $\bQ$-Cartier. 
	\begin{enumerate}
		\item If $(X,\Delta)$ is klt over the generic point of $C$, then $-(K_{X/C}+\Delta)$ is not nef and big.
		
		\item If $(X,\Delta)$ is lc over the generic point of $C$, then $-(K_{X/C}+\Delta)$ is not ample.
	\end{enumerate}
\end{thm}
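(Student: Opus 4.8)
The plan is to argue by contradiction in both cases at once, reducing to a \emph{relatively log Calabi--Yau fibration} over $C$ and then extracting a contradiction from the canonical bundle formula. So assume $L\coloneqq -(K_{X/C}+\Delta)$ is nef and big (case (1)) or ample (case (2)); in either case $L$ is big, so some positive multiple is linearly equivalent to an effective divisor. After the usual perturbation — take $B=\tfrac1N D$ with $D$ a general member of $|{-}N(K_{X/C}+\Delta)|$ for $N\gg0$ — we may choose an effective $\bQ$-divisor $B\sim_{\bQ}L$ such that $\Delta^{+}+B$ and $\Delta^{-}$ still have no common component, $\Delta+B$ has negative part exactly $\Delta^-$, and $(X,\Delta+B)$ is sub-klt (resp.\ sub-lc) over the generic point of $C$. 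By construction
\[
K_X+(\Delta+B)\sim_{\bQ}(K_X+\Delta)+L=(K_X+\Delta)-(K_{X/C}+\Delta)=f^{*}K_C,
\]
so $f\colon(X,\Delta+B)\to C$ is an lc-trivial (in case (1), klt-trivial) fibration over the curve $C$.

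Next I would apply the canonical bundle formula of Fujino--Mori and Ambro (and Fujino's extension to lc-trivial fibrations for case (2)): there are $\bQ$-divisors $B_C$ (discriminant part) and $M_C$ (moduli part) on $C$ with $K_X+\Delta+B\sim_{\bQ}f^{*}(K_C+B_C+M_C)$, hence
\[
B_C+M_C\sim_{\bQ}0\ \text{ on }C,\qquad\text{so}\qquad\deg B_C+\deg M_C=0.
\]
Now two positivity statements come in. First, $M_C$ is the trace on $C$ of a b-nef $\bQ$-b-divisor, so $M_C$ is nef on the curve $C$ and $\deg M_C\ge0$. Second, I claim $B_C\ge0$, and hence $\deg B_C\ge0$: this is automatic for an effective boundary, and here it is exactly the point where the hypothesis $f_{*}\sO_X(k\Delta^{-})=\sO_C$ for all $k\ge0$ is used, to force the a priori possibly negative contribution of $\Delta^{-}$ to the coefficients of $B_C$ over the points of $C$ below $\supp\Delta^{-}$ to be non-negative. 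Combining, $\deg B_C=\deg M_C=0$, so $B_C=0$ and $M_C\equiv0$.

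Finally I would derive the contradiction from $B_C=0$, $M_C\equiv0$: vanishing of the discriminant says $f$ has no "bad" fibres for $(X,\Delta+B)$, and vanishing (after a further base change, torsion) of the moduli part says the lc-trivial fibration is isotrivial. Passing to a finite cover $C'\to C$ that kills the relevant monodromy, the normalized main component of $X\times_C C'$ becomes birational, as a log Calabi--Yau fibration, to a product $F_0\times C'$ with $B$ pulled back from the first factor. But then the pull-back of $B$ to $X\times_C C'$ has Iitaka dimension at most $\dim F_0=\dim X-1$, contradicting bigness of $B\sim_{\bQ}L$ — bigness being preserved under pull-back along the generically finite morphism $X\times_C C'\to X$. (In case (2), $L$ ample is in particular big, so the same contradiction applies.)

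The main obstacle, and the step I expect to require genuine care, is the non-negativity $B_C\ge0$ together with the precise bookkeeping of how the hypothesis on $\Delta^{-}$ feeds into it, and, relatedly, the isotriviality/product statement after base change when the boundary is only a sub-boundary. An alternative opening move avoiding the canonical bundle formula is to note that the general log leaf $(F,\Delta_F)$ is klt with $-(K_F+\Delta_F)$ nef and big, hence $F$ is rationally connected, so by Graber--Harris--Starr $f$ admits a section $\Sigma$; one then wants to play the nefness $L\cdot\Sigma\ge0$ (i.e.\ $\deg\det N_{\Sigma/X}\ge\Delta\cdot\Sigma$) against the bigness of $L$, but turning this into a contradiction for a single section, rather than a covering family, seems to need essentially the same positivity input, so I would fall back on the route above.
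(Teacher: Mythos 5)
The paper does not actually prove this statement; Theorem \ref{t.AD-fibration} is quoted verbatim from \cite[Theorem 5.1]{AraujoDruel2013}, so there is no in-paper proof to compare against. Your proposal is a genuinely different route from the one Araujo and Druel take in the cited reference (they do not go through the canonical bundle formula), so I will assess it on its own merits.

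The opening moves are sound, and the delicate point you flag -- non-negativity of the discriminant $B_C$ -- in fact works for exactly the reason you anticipate. The hypothesis $f_*\sO_X(k\Delta^-)=\sO_C$ for all $k\ge0$ forces, for every closed point $P\in C$, at least one irreducible component $D_0$ of $f^{-1}(P)$ to lie outside $\supp\Delta^-$: otherwise $k\Delta^-\ge f^*P$ for $k\gg0$ near $P$ and then $f_*\sO_X(k\Delta^-)\supsetneq\sO_C$. For that $D_0$ one has $\mult_{D_0}(\Delta+B)\ge0$ (as $B\ge0$ shares no component with $\Delta^-$) and $\mult_{D_0}(f^*P)\ge1$, so $D_0$ already has log discrepancy $\le0$ with respect to $(X,\Delta+B+f^*P)$, giving $c_P\le1$ and hence $B_C\ge0$. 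Combined with nefness of $M_C$ and $\deg(B_C+M_C)=0$ this yields $B_C=0$ and $M_C\equiv0$, as you say.

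The genuine gap is the final contradiction. The statement that after a finite base change the fibration becomes birational to a product $F_0\times C'$ "with $B$ pulled back from the first factor" is not what the structure theory for $M_C\equiv 0$ delivers. Ambro's isotriviality gives a crepant birational identification of $(X',\Delta'+B')$ with $(F_0\times C',\,p_1^*\Gamma_0)$: it is the \emph{log Calabi--Yau divisor} $K_{X'}+\Delta'+B'$, i.e.\ the \emph{sum} $\Delta'+B'$ up to crepant transform, that becomes a pullback from $F_0$, not $B'$ (equivalently $L'=-(K_{X'/C'}+\Delta')$) alone. Since $\Delta'$ is a fixed sub-boundary that a priori has no reason to be $g$-pulled-back, the decomposition $B'= (\text{pullback from }F_0)-\Delta'+(\text{exceptional})$ does not bound $\kappa(B')$ by $\dim F_0$, and so bigness of $B'\sim_{\bQ}L'$ does not visibly clash with the trivialisation. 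Note also that, by itself, $B_C=0$ and $M_C\equiv0$ is \emph{not} a contradiction: these equalities hold for every klt-trivial fibration over a curve satisfying the rank condition, so everything must come from the structural consequence of $M_C\equiv0$, and that step as written does not close. To finish along these lines one would need an additional argument controlling $\Delta$ in the trivialisation -- for instance showing that $\Delta\cdot C_0\ge0$ for a general fibre $C_0$ of the induced map $g\colon X''\dashrightarrow F_0$, which is where the hypothesis on $\Delta^-$ would have to be used a second time; as it stands, this is the missing idea.
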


The following result is a variant of \cite[Proposition 3.14]{AraujoDruel2014} and the proof is essentially a combination of arguments and results due to Araujo and Druel from \cite{AraujoDruel2013,AraujoDruel2014}. See also \cite[Proposition 5.8]{AraujoDruel2013}, \cite[Proposition 2.12 and Proposition 4.6]{Druel2017b} and \cite[Theorem 4.1]{Liu2019}.

\begin{prop}
	\label{t.variant-AD}
	Let $\sF\subsetneq T_X$ be a foliation on a $\bQ$-factorial normal projective variety $X$ with algebraic rank $r^a>0$. Let $i:F\rightarrow i(F)\subset X$ be the normalisation of the closure of a general leaf of $\sF^a$. Let $P$ be an effective $\bQ$-divisor on $X$ and denote by $D_P$ the pull-back $i^*P$.
	\begin{enumerate}
		\item If the pair $(F,D_F+D_P)$ is klt, then $-(K_{\sF}+P)$ is not nef and big. 
		
		\item If the pair $(F,D_F+D_P)$ is lc and $-(K_{\sF}+P)$ is ample, then there is a common point in the closure of general leaves of $\sF^a$.
	\end{enumerate}
\end{prop}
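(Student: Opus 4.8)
The plan is to reduce the statement to Theorem~\ref{t.AD-fibration} by passing to the family of leaves of the algebraic part $\sF^a$. Recall from Section~\ref{s.log-algebraic-part} the diagram with $\rho:Y\to X$ and $q:Y\to Z$, where $\sH=\rho^{-1}\sF^a$ is the algebraically integrable foliation induced by $q$, and the relation \eqref{e.Canonical-bundle}
\[
K_{\sH}+E_{\sF^a}+E_{\sG}\sim_{\bQ}\rho^*K_{\sF}-q^*K_{\sG}.
\]
Set $\tilde P:=\rho^*P$ and write $E:=E_{\sF^a}+E_{\sG}$. On a general fibre $F$ of $q$ one has $E|_F=D_F$ and $\tilde P|_F=i^*P=D_P$, and $K_F+D_F+D_P\sim_\bQ \rho^*(K_\sF+P)|_F$. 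The strategy is to apply Theorem~\ref{t.AD-fibration} to the (generic restriction of the) fibration $q$, with boundary built from $E+\tilde P$; the klt/lc hypothesis on $(F,D_F+D_P)$ is exactly the klt/lc hypothesis over the generic point of the base required there, after accounting for the base $K_{\sG}$ contribution and the components of $E+\tilde P$ that do not dominate $Z$.

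First I would reduce to the case where $Z$ is a curve. If $\dim Z>0$, choose a general complete-intersection curve $C\subset Z$ and replace $Y$ by the normalisation of $q^{-1}(C)$; the general leaves of $\sF^a$ appearing over $C$ are still the general leaves, so the pair $(F,D_F+D_P)$ is unchanged, and nefness/bigness (resp. ampleness) of $-(K_\sF+P)$ pulls back to nefness/bigness (resp. ampleness) of $\rho^*$ of it along $C$. If $\dim Z=0$ then $\sF=\sF^a$ is algebraically integrable and $q:Y\to Z$ is the projection to a point; in that case the whole of $Y$ is the closure of a single leaf, $\rho$ is birational, and the conclusion in (1) follows because $-(K_\sF+P)$ nef and big would force $-(K_Y+D_F+D_P)=-\rho^*(K_\sF+P)$ nef and big with $(Y,D_F+D_P)$ klt and $Y$ projective of the same dimension — but then $Y$ would be of Fano type, contradicting nothing directly, so here one instead just notes there is trivially a common point (all leaves coincide), which also settles (2); so the substantive case is $\dim Z\geq 1$, reduced to $\dim Z=1$.

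With $Z=C$ a smooth curve, write $E+\tilde P=\Delta^+-\Delta^-$ with $\Delta^\pm\geq 0$ having no common components. By Lemma~\ref{l.negative-components}, $E_{\sG}^-$ does not dominate $C$; since $E_{\sF^a}$ and $\tilde P$ are effective, $\Delta^-$ is supported on $E_\sG^-$ and hence $\Delta^-$ does not dominate $C$, which gives $q_*\sO_Y(k\Delta^-)=\sO_C$ for all $k\geq 0$. Moreover $\Delta^+|_F=D_F+D_P$, so $(Y,\Delta^+-\Delta^-)$ is klt (resp. lc) over the generic point of $C$ by hypothesis. Using \eqref{e.Canonical-bundle}, $K_{Y/C}+(E+\tilde P)\sim_\bQ K_Y-q^*K_C+\rho^*K_\sF-q^*K_\sG+\tilde P\sim_\bQ \rho^*(K_\sF+P)+q^*(-K_C-K_\sG)$, i.e. up to the $q$-pullback of the divisor $-K_C-K_\sG$ on $C$ (which is numerically a multiple of a point, hence nef or anti-nef on $Y$ as appropriate), $-(K_{Y/C}+E+\tilde P)$ differs from $-\rho^*(K_\sF+P)$. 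In case (1): if $-(K_\sF+P)$ were nef and big, then $-(K_{Y/C}+E+\tilde P)$ is nef and big (the $q^*$ of a divisor on a curve only shifts by a nef class, and bigness on the total space is preserved), contradicting Theorem~\ref{t.AD-fibration}(1). In case (2): if $-(K_\sF+P)$ is ample, I would need $-(K_{Y/C}+E+\tilde P)$ to fail to be ample unless the leaves share a point; here if the general leaves of $\sF^a$ have no common point, then $\rho$ is finite over a general point of $X$ in the relevant sense and one argues that $-(K_{Y/C}+E+\tilde P)$ is $q$-ample and in fact ample, contradicting Theorem~\ref{t.AD-fibration}(2) — so some leaf-closures must meet.

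The main obstacle will be the bookkeeping in the last paragraph: controlling the discrepancy between $-\rho^*(K_\sF+P)$ and $-(K_{Y/C}+E+\tilde P)$ caused by the $q^*(-K_C-K_\sG)$ term and by the non-horizontal part $\Delta^-$, and in case (2) justifying that ``no common point of general leaves'' really does upgrade $-(K_{Y/C}+E+\tilde P)$ from $q$-ample to ample (this is where the argument of \cite[Proposition~3.14]{AraujoDruel2014} must be followed carefully). One clean way to handle case (2) is contrapositive: assume no two general leaf-closures meet; then $\rho$ is a birational morphism that is finite over the generic point of each leaf, so after a further base change making $q$ have a section through the generic point, $-\rho^*(K_\sF+P)$ ample forces $-(K_{Y/C}+E+\tilde P)$ ample over $C$, and a standard argument (twisting by a sufficiently positive divisor pulled back from $C$) makes it ample on $Y$, contradicting Theorem~\ref{t.AD-fibration}(2). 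Everything else is routine: the reduction to curves, the identification $\Delta^+|_F=D_F+D_P$, and the application of Lemma~\ref{l.negative-components} to see $q_*\sO_Y(k\Delta^-)=\sO_C$.
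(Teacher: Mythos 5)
Your overall strategy is the right one (reduce to Theorem~\ref{t.AD-fibration} via the diagram of Section~\ref{s.log-algebraic-part}, using Lemma~\ref{l.negative-components} to control the negative exceptional part), and the formula manipulations are essentially on the right track. However, there is a genuine gap in how you reduce to a fibration over a curve.

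You choose a general complete-intersection curve $C$ inside the \emph{base} $Z$ and then restrict $q$ to $q^{-1}(C)\to C$. The hypothesis of Theorem~\ref{t.AD-fibration} requires $q_*\sO(k\Delta^-)=\sO_C$ for all $k\geq 0$, and you deduce this from the statement that $\Delta^-$ ``does not dominate $C$.'' That implication is false: Lemma~\ref{l.negative-components} only says that $E_\sG^-$ does not dominate $Z$, so its $q$-image is a proper closed subset of $Z$; if that image happens to be a divisor $D\subset Z$, then a general curve $C$ still meets $D$, and over the finitely many points of $C\cap D$ the restriction of $E_\sG^-$ to $q^{-1}(C)$ may consist of \emph{entire fibres} of $q^{-1}(C)\to C$. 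In that case $q_*\sO(k\Delta^-)$ is strictly larger than $\sO_C$ and Theorem~\ref{t.AD-fibration} does not apply. This is exactly why the paper's proof chooses the complete-intersection curve $C$ inside $X$, disjoint from $\Exc(\rho)$, and then lets $B=q(C)\subset Z$: the curve $C\cong\rho^{-1}(C)$ becomes a multisection of the restricted family which is \emph{disjoint} from $\supp(E_\sG^-)\subset\Exc(\rho)$, and since the multisection meets every fibre, no fibre can lie entirely in the negative part. That geometric control is what makes the pushforward condition hold, and your construction does not give it.

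Two further, smaller points. First, you cannot simply restrict $q$ to $q^{-1}(C)\to C$ and identify $K_\sH$ with $K_{Y/C}$: the paper performs a finite base change $C'\to C$ using the Bosch--L\"utkebohmert--Raynaud flattening theorem so that the resulting family $Y'\to C'$ has reduced fibres, and only then does $K_{\sH'}=K_{Y'/C'}$ hold; this base change also produces the effective $\Delta'$ on $Y'$ via Lemma~\ref{l.base-change-foliation}. Second, your intermediate formula introduces an extra term $q^*(-K_C)$: from~\eqref{e.Canonical-bundle} one has, after ensuring reduced fibres, $K_{Y'/C'}+D'+g^*\rho^*P\sim_\bQ g^*\rho^*(K_\sF+P)-g^*q^*K_\sG$, with no $K_C$ contribution; the only base correction term is $-g^*q^*K_\sG$, which is anti-nef once $C$ is general (since $K_\sG$ is pseudoeffective). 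The spurious $K_C$ term does not affect the conclusion but signals that you are not tracking where $K_\sH$ equals the relative canonical.
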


\begin{proof}
	Throughout the proof we shall follow the notation in Section \ref{s.log-algebraic-part}. In particular, we have the following commutative diagram
	\[
	\begin{tikzcd}[column sep=large, row sep=large]
		Y \arrow[r,"h"] \arrow[rr, bend left, "\rho"] \arrow[d,"q" left]
		& U \arrow[d,"\pi"] \arrow[r,"\nu"]
		& X \\
		Z \arrow[r,"\mu"]
		& T
		& 
	\end{tikzcd}
	\]
	such that $Z$ is a smooth projective variety and the image of a general fibre of $q$ under $\rho$ is the closure of a general leaf of $\sF^a$. Denote by $\sH$ the foliation defined by $q$ and let $\sG$ be the transcendental part of $\rho^{-1}\sF$. 
	
	As explained in the beginning of Section \ref{s.log-algebraic-part}, the morphism $q:Y\rightarrow Z$ is universally open. Let $C$ be a general complete intersection curve in $X$ which is disjoint from the closed subset $\rho(\Exc(\rho))$. Then we can identify $C$ with $\rho^{-1}(C)\subset Y$. Let us denote $q(C)$ by $B$. Let $B'$ be the normalisation of $B$ and let $Y_{B'}$ be the fibre product $Y\times_Z B'$. Then we may assume that natural projection $Y_{B'}\rightarrow B'$ is open and its general fibres are connected and normal. In particular, the variety $Y_{B'}$ is irreducible and hence $(Y_{B'})_{\red}\rightarrow B'$ is flat. Thus, thanks to \cite[Theorem 2.1]{BoschLuetkebohmertRaynaud1995}, there exists a finite morphism $C'\rightarrow B'$ such that $q':Y' \rightarrow C'$ is flat with reduced fibres, where $Y'$ is the normalisation of $(Y_{B'})_{\red}\times_{B'} C'$ and $q':Y'\rightarrow C'$ is the morphism induced by the projection $(Y_{B'})_{\red}\times_{B'} C' \rightarrow C'$. 
	
	Let $\sH'$ be the algebraically integrable foliation on $Y'$ induced by the natural projection $q':Y'\rightarrow C'$. Denote by $g:Y'\rightarrow Y$ the induced finite morphism. Let $m$ be the Cartier index of $K_{\sF^a}$. Thanks to \cite[Remark 3.12]{AraujoDruel2014}, we have a generically surjective map
	\[
	\otimes^m \Omega_{Y'/C'}^{r^a}  \rightarrow g^*\rho^*\sO_{X}(mK_{\sF^a}).
	\]
	This implies that there exists a canonically defined effective $\bQ$-divisor $\Delta'$ on $Y'$ such that $K_{\sH'} + \Delta' \sim_{\bQ} g^*\rho^*K_{\sF^a}$. Moreover, let $F'$ be a general fibre of $q':Y'\rightarrow C'$. Since both $C$ and $F$ are general, we may assume that $g(F')=F$ and the pair $(F',\Delta'|_F)$ is isomorphic to the pair $(F,\Delta_F)$, which is the general log leaf of $\sF^a$. 
    
    On the other hand, since $K_{\sG}$ is pseudoeffective, we can assume that $K_{\sG}\cdot B\geq 0$ since $C$ is general. In particular, the pull-back $g^*q^*K_{\sG}$ is nef. Since the fibres of $q':Y'\rightarrow C'$ are reduced, we have $K_{\sH'}=K_{Y'/C'}$ and
    \begin{align*}
    	K_{Y'/C'}+\Delta'+g^*E_{\sG} = K_{\sH'} + \Delta' + g^*E_{\sG}  
    	    & \sim_{\bQ} g^*\rho^*K_{\sF^a} + g^*E_{\sG} \\
    	    & \sim_{\bQ} g^*\rho^*K_{\sF} - g^*q^*K_{\sG}.
    \end{align*}
    Let us denote by $D'$ the $\bQ$-divisor $\Delta'+g^*E_{\sG}$ and by $D_{F'}$ the restriction $D'|_{F'}$. Then the pair $(F',D'_F)$ is isomorphic to the pair $(F,D_F)$ and the pair $(F', D_{F'} + g^*\rho^*P|_{F'})$ is isomorphic to $(F,D_F+D_P)$.
    
    We write $D'=D'^+ - D'^-$ with $D'^+$, $D'^-\geq 0$ having no common components. Then clearly $g(\supp(D'^-))$ is contained in $\supp(E_{\sG}^-)$, which is contained in $\Exc(\rho)$. In particular, the curve $C$ is disjoint from $g(\supp(D'^-))$, which implies that there is no fibre of $q':Y'\rightarrow C'$ contained in $\supp(D'^-)$. Thus, we have $q'_*\sO_{Y'}(kD'^-)=\sO_{C'}$ for every non-negative integer $k$.
    
    Now we assume that $(F,D_F+D_P)$ is klt. Then the pair $(Y',D'+g^*\rho^*P)$ has klt singularities over the generic point of $C'$ by inversion of adjunction. Applying Theorem \ref{t.AD-fibration} yields that the $\bR$-Cartier $\bR$-divisor
    \[
    -(K_{Y'/C'}+D'+g^*\rho^*P) = -(K_{\sH'}+D'+g^*\rho^*P) \sim_{\bR} -g^*\rho^*(K_{\sF}+P) + g^*q^*K_{\sG}
    \]
    cannot be nef and big. As $g^*q^*K_{\sG}$ is nef, one see that $-(K_{\sF}+P)$ cannot be nef and big as $C$ is general and the first statement follows.
    
    Finally we assume that $(F,D_F+D_P)$ is lc and $-(K_{\sF}+P)$ is ample. Then the pair $(Y',D'+g^*\rho^*P)$ has lc singularities over the generic point of $C'$ by inversion of adjunction. Suppose to the contrary that there is no common point in the closure of the general leaves of $\sF^a$. Following the same argument as in \cite[Proposition 5.3]{AraujoDruel2013}, we can assume that the morphism $Y'\rightarrow (Y_B)_{\red}\rightarrow \rho((Y_B)_{\red})$ is finite and hence $-g^*\rho^*(K_{\sF}+P)$ is ample. In particular, since $g^*q^*K_{\sG}$ is nef, the anti-log canonical divisor $-(K_{Y'/C'}+D'+g^*\rho^*P)$ is ample, which contradicts Theorem \ref{t.AD-fibration}.
\end{proof}

\subsection{Kobayashi-Ochiai's theorem for foliations}

In this subsection we apply Proposition \ref{t.variant-AD} to prove a Kobayashi-Ochiai's theorem for foliations. We start with the following example which will be frequently used in Section \ref{s.Examples-Seshadri}.

\begin{example}
	\label{e.generalised-index}
	Let $Z$ be normal projective variety and let $\sO_Z(1)$ be an ample line bundle on $Z$. Given positive integers $r'$, $m$ and non-negative integers $b_1\geq \dots\geq b_{r'}\geq 0$, let us denote by $\sE$ the vector bundle 
	\[
	\sO_Z(m)\oplus \bigoplus_{i=1}^{r'}\sO_Z(-b_i).
	\]
	Set $b=\sum_{i=1}^{r'} b_i$ and denote by $X$ the projective bundle $\bP(\sE)$ with $\pi:X\rightarrow Z$ the natural projection. Let $\Lambda$ be the tautological divisor of $\bP(\sE)$ and let $A$ be a Cartier divisor on $Z$ such that $\sO_Z(A)\cong \sO_Z(1)$. Denoter by $\bP(\sQ)=E\subsetneq X$ the prime divisor associated to the quotient $\sE\rightarrow \oplus\sO_Z(-b_i)=\sQ$. Then we have $E\sim \Lambda - m\pi^*A$. 
\end{example}

\begin{example}[\protect{Normal generalised cone}]
	\label{e.generalised-cone}
	In Example \ref{e.generalised-index}, set $b_i=0$ for every $1\leq i\leq r'$. For an integer $e\gg 1$, the linear system $|\sO_{\bP(\sE)}(e)|$ induces a birational morphism $\mu:X\rightarrow Y$ onto a normal projective variety. The morphism $\mu$ contracts the divisor $E=\bP(\sO_Z^{\oplus r'})\subsetneq X$ onto $\mu(E)=\bP^{r'-1}$ and induces  an isomorphism $Y\setminus \mu(E)\cong X\setminus E$. We will call $X$ the \emph{normal generalised cone over the base $(Z,\sO_Z(m))$ with vertex $\mu(E)\cong\bP^{r'-1}$}. 
	
	By \cite[Remark 4.2]{AraujoDruel2014}, if $Z$ is $\bQ$-factorial and $\rho(Z)=1$, then so is $Y$. Moreover, by \cite[Example 3.8]{Kollar1997}, if $Z$ is klt and $-K_Z$ is ample, then $Y$ has only klt singularities and if $-K_Z\equiv 0$ and $Z$ is lc, then $Y$ has only lc singularities.
\end{example}

We recall the following general definition, see \cite[Lemma 4.1]{AraujoDruel2019}.

\begin{defn} 
	\label{d.generalised-index}
	Let $X$ be a normal projective variety and let $D$ be a big $\bR$-Cartier $\bR$-divisor on $X$. We define
	\begin{center}
		$\widehat{\iota}(D)\coloneqq \sup\{t\in \bR\,|\,D \equiv tA+P,$ where $A$ is an ample Cartier divisor and $P$ is a pseudoeffective $\bR$-Cartier $\bR$-divisor$\}$.
	\end{center} 
\end{defn}
Note that we have $\widehat{\iota}(D)<\infty$ and there exists an ample Cartier divisor $H$ on $X$ and a pseudoeffective $\bR$-Cartier $\bR$-divisor $P$ such that $D\equiv \widehat{\iota}(D)H+P$, see \cite[Lemma 4.1]{AraujoDruel2019}. Let $\sF\subsetneq T_X$ be a foliation on a normal projective variety $X$ such that $-K_{\sF}$ is a big $\bQ$-Cartier divisor. Then the \emph{generalised index} $\widehat{\iota}(\sF)$ of $\sF$ is defined as $\widehat{\iota}(-K_{\sF})$.

\begin{proof}[Proof of Theorem \ref{t.Kobayashi-Ochiai-I}]
	Let $q:Y\rightarrow Z$ and $\rho:Y\rightarrow X$ be the morphism defined as in Section \ref{s.log-algebraic-part}. Arguing by contraction we suppose that $\widehat{\iota}(\sF)>r^a$. Let $0<\varepsilon\ll 1$ be a sufficiently small positive real number such that $r^a<\widehat{\iota}(\sF)-\varepsilon\in\bQ$. Then by assumption the divisor $-K_{\sF} - (\widehat{\iota}(\sF)-\varepsilon) H$ is a big $\bQ$-divisor, where $H$ is an ample Cartier divisor such that $-K_{\sF}-\widehat{\iota}(\sF) H$ is pseudoeffective. In particular, there exists an effective $\bQ$-divisor $P$ on $X$ such that 
	\[
	-K_{\sF} \sim_{\bQ} (\widehat{\iota}(\sF) - \varepsilon) H + P.
	\]
	Let $i:F\rightarrow X$ be the normalisation of a general leaf of $\sF^a$ and let $(F,D_F)$ be the log algebraic part of a general leaf of $\sF$. Denote by $D_P$ the pull-back $i^*P$. Then applying Theorem \ref{t.Kobayashi-Ochiai} to the pair $(F,D_F+D_P)$ shows that $(F,D_F+D_P)$ has only klt singularities and we get a contraction by Proposition \ref{t.variant-AD}.
\end{proof}

\begin{proof}[Proof of Theorem \ref{t.Kobayashi-Ochiai-II}]
	By definition, we have $\iota(\sF)\leq \widehat{\iota}(\sF)$. Hence, we must have $\iota(\sF)\leq r^a$ by Theorem \ref{t.Kobayashi-Ochiai-I}. Moreover, one can easily derive from Theorem \ref{t.Kobayashi-Ochiai} and the proof of Theorem \ref{t.Kobayashi-Ochiai-I} that if the equality $\iota(\sF)=r^a$ holds, then the log algebraic part $(F,D_F)$ of a general leaf of $\sF$ is isomorphic to $(\bP^{r^a},H)$, where $H$ is a hyperplane in $\bP^{r^a}$.
	
	Let $q:Y\rightarrow Z$ and $\rho:Y\rightarrow X$ be the morphisms defined in Section \ref{s.log-algebraic-part}. Then the images of the general fibres of $q$ under $\rho$ are the closure of the leaves of $\sF^a$. In particular, the general fibre $F$ of $q$ is isomorphic to $\bP^{r^a}$. Let $A$ be an ample Cartier divisor on $X$ such that $-K_{\sF}\sim_{\bQ} \iota(\sF) A$. Then $\sM\coloneqq\rho^*\sO_X(A)$ is a $q$-ample line bundle such that $\sM|_F\cong \sO_{\bP^{r^a}}(1)$. Thus the pair $(Y,\sM)$ is isomorphic to $(\bP_Z(\sE),\sO_{\bP(\sE)}(1))$ as varieties over $Z$ by \cite[Proposition 4.10]{AraujoDruel2014}, where $\sE\coloneqq q_*\sM$ is a nef vector bundle over $Z$ with rank $r^a+1$.
	
	On the other hand, note that there exists a purely transcendental foliation $\sG$ on the smooth projective variety $Z$ such that $K_{\sG}$ is pseudoeffective and $\rho^{-1}\sF=q^{-1}\sG$. Moreover, by \eqref{e.Canonical-bundle} there exists a canonically defined $\rho$-exceptional $\bQ$-divisor $E$ on $Y$ such that
	\[
	K_{\sH} + E \sim_{\bQ} \rho^*K_{\sF} - q^* K_{\sG},
	\]
	where $\sH$ is the foliation defined by $q$ and the restriction $E|_F=D_F$ of $E$ to a general fibre $F$ of $q$ is a hyperplane in $F\cong\bP^{r^a}$. In particular, there exists a unique $q$-horizontal prime divisor $E_h$ on $Y$ such that $E-E_h$ is $q$-vertical and hence there exists a $\bQ$-divisor $D$ on $Z$ such that $q^*D=E-E_h$. Let $\{H_i\}_{1\leq i\leq  n-1}$ be a collection of ample Cartier divisors on $X$ and let $C$ be a general complete intersection of general members of the linear systems $|m_1 H_1|, \dots, |m_{n-1} H_{n-1}|$ with $m_i\gg 1$. Then the preimage $\rho^{-1}(C)$ is disjoint from $\supp(E)$ and as a consequence the curve $B\coloneqq q(\rho^{-1}C)$ is disjoint from $\supp(D)$. Let $n:B'\rightarrow B$ be the normalisation of $B$ and let $q':Y_{B'}\rightarrow B'$ be the fibre product $Y\times_Z B'\cong \bP(n^*\sE)$. Let $\sH'$ be the foliation on $Y_{B'}$ defined by $q'$ and denote by $g:Y_{B'} \rightarrow Y$ the natural morphism. Then we have
	\begin{equation}
		\label{e.curve-base-change}
		K_{\sH'} + g^*E_h = g^*K_{\sH} + g^*E \sim_{\bQ} g^*(\rho^*K_{\sF}-q^*K_{\sG}) \sim_{\bQ} -r^a A'- g^*q^*K_{\sG},
	\end{equation}
	where $A'\coloneqq g^*\rho^*A$. Set $\sE'=n^*\sE$. Then we have
	\begin{equation}
		\label{e.projective-bundle-}
		\sO_{Y_{B'}}(K_{\sH'}) \cong \sO_{Y_{B'}}(K_{Y_{B'}/B'}) \cong \sO_{Y_{B'}}(-(r^a+1) A') \otimes q'^*\det(\sE').
	\end{equation}
	Combining \eqref{e.curve-base-change} and \eqref{e.projective-bundle-} yields that there exists a sufficiently divisible positive integer $m$ such that
	\begin{equation}
		\label{e.non-vanishing-over-curves}
		\sO_{Y_{B'}}(m g^*E_h) \cong \sO_{Y_{B'}}(mA')\otimes q'^*\det(\sE')^{\otimes -m} \otimes q'^*\sO_{B'} (n^*K_{\sG})^{\otimes -m}. 
	\end{equation}
	As $\sO_{Y_{B'}}(A') \cong \sO_{\bP(\sE')}(1)$ and $mg^*E_h$ is effective, the isomorphism \eqref{e.non-vanishing-over-curves}  means
	\[
	H^0(B',\Sym^m\sE'\otimes \det(\sE')^{\otimes -m} \otimes \sO_{B'}(m n^*K_{\sG})^{\otimes -1})\not=0.
	\]Then \cite[Lemma 4.11]{AraujoDruel2014} yields $\deg(n^*K_{\sG})\leq 0$. As $K_{\sG}$ is pseudoeffective and $C$ is a general complete intersection, we must have $K_{\sG}\cdot B'=0$. In particular, since $\rho^{-1}(C)$ is disjoint from $\supp(E_{\sG})\subset\supp(\Exc(\rho))$, we obtain
	\[
	K_{\sQ} \cdot C = \rho^*K_{\sQ} \cdot \rho^{-1}(C) = (q^*K_{\sG}+ E_{\sG}) \cdot \rho^{-1}(C) = q^*K_{\sG}\cdot \rho^{-1}(C)=0,
	\]
	where $\sQ$ is the reflexive hull of the quotient $\sF/\sF^a$ and $K_{\sQ}$ is a Weil divisor such that $\sO_X(-K_{\sQ})\cong \det(\sQ)$. Since $K_{\sQ}$ is pseudoeffective and $C$ is a general complete intersection, by \cite[Lemma 6.5]{Peternell1994} we obtain $K_{\sQ}\equiv 0$. As a consequence, the algebraically integrable foliation $\sF^a$ is a Fano foliation with $-K_{\sF^a}\equiv r^a A$ and its general log leaf $(F,\Delta_F)$ is isomorphic to the log algebraic part $(F,D_F)=(\bP^{r^a},H)$ of $\sF$ as $K_{\sQ}|_F\equiv 0$. Thus, applying \cite[Theorem 1.3]{Hoering2014} to $(X,\sF^a)$ shows that the $X$ is a normal generalised cone over a $\bQ$-factorial polarised variety $(T,\sL)$ with vertex $\bP^{r^a-1}$. Here we remark that though \cite[Theorem 1.3]{Hoering2014} is stated for $\bQ$-linear equivalence, the proof given there still works for numerical equivalence in our situation because the $\bQ$-linear equivalence is only used to derive the description of the general log leaves by applying \cite[Proposition 4.5]{AraujoDruel2014}, or equivalently Theorem \ref{t.Kobayashi-Ochiai}, in Step 1 of its proof, see \cite[p.2476, Proof of Theorem 1.3]{Hoering2014} for details. However, this description holds automatically for $(X,\sF^a)$ by the argument above.
	
	Finally, by abuse of notation we may still denote by $\sG$ the foliation on $T$ such that $h^{-1}\sG = \sF$, where $h:X\dashrightarrow T$ is the natural rational map. Let $\pi:U\coloneqq\bP(\sL\oplus \sO_T^{\oplus r^a})\rightarrow T$ be corresponding projective bundle (cf. Example \ref{e.generalised-cone}) and let $E$ be the exceptional divisor of $\mu:U\rightarrow X$. Then it is clear that we have 
	\[
	\mu_*\pi^*K_{\sG}=K_{\sQ} \sim_{\bQ} K_{\sF} - K_{\sF^a} \sim_{\bQ} 0.
	\]
	On the other hand, we also have $\pi^*K_{\sG} + aE \sim_{\bQ} \mu^*K_{\sQ}\sim_{\bQ} 0$ for some $a\geq 0$. Nevertheless, as the restriction $E|_F$ of $E$ to a general fibre $F\cong \bP^{r^a}$ of $\pi$ is a hyperplane, we must have $a=0$ and hence $K_{\sG}\sim_{\bQ} 0$.
\end{proof}

\subsection{Proof of Theorem \ref{c.upper-bound-Seshadri-weak-Fano} and Theorem \ref{c.RC-Folia-large-Seshadri}}

In this subsection we finish the proofs of Theorem \ref{c.upper-bound-Seshadri-weak-Fano} and Theorem \ref{c.RC-Folia-large-Seshadri} and the proofs are similar to that of Theorem \ref{t.Kobayashi-Ochiai-I} by applying Theorem \ref{t.Zhuang}. The following theorem is a slight generalisation of Theorem \ref{c.upper-bound-Seshadri-weak-Fano}.

\begin{thm}
	\label{t.bounding-Seshadri}
	Let $X$ be a $\bQ$-factorial normal projective variety and let $\sF\subsetneq T_X$ be a foliation such that $-K_{\sF}\equiv A + P$, where $A$ is a nef $\bR$-divisor and $P$ is a pseudoeffective $\bR$-divisor. Then $r^a\geq \epsilon(A)$. 
\end{thm}

\begin{proof}
	If $r^a=0$, then $\sF$ is purely transcendental and hence $K_{\sF}$ is pseudoeffective by Theorem \ref{t-CP-simplified}. In particular, the pseudoeffective $\bR$-divisor $-A\equiv K_{\sF}+P$ is anti-nef by assumption and so $A\equiv 0$ and $\epsilon(A)=0$.
	
	Now suppose that $r^a>0$ and $\epsilon(A)>r^a$. Then the $\bR$-divisor $A$ is nef and big. After replacing $A$ by $(1-\varepsilon)A$ and $P$ by $(1-\varepsilon)A + P$ for a sufficiently small number $\varepsilon>0$, we may assume that $P$ is actually a big $\bR$-divisor. In particular, up to $\bR$-linear equivalence, we may also assume that $P$ is an effective $\bR$-divisor.
    
    Let $i:F\rightarrow i(F)\subset X$ be the normalisation of the closure of a general leaf of $\sF^a$. Denote by $(F,D_F)$ the log algebraic part of a general leaf of $\sF$. As $F$ is general, we can assume that $F$ is not contained in the support of $P$. In particular, the pull-back $D_P\coloneqq i^*P$ is a well-defined effective $\bR$-divisor and we have
    \begin{equation}
    	K_F + D_F + D_P \sim_{\bR} i^*K_{\sF} + i^*P \equiv -i^*A.
    \end{equation}
    By assumption, we have $\epsilon(i^*A)>r^a=\dim(F)$. Then Theorem \ref{t.Zhuang} implies that the pair $(F,D_F+D_P)$ is klt, which contradicts Proposition \ref{t.variant-AD}.
\end{proof}

\begin{cor}
	\label{c.log-leaf-maxSeshadri}
	Let $X$ be a $\bQ$-factorial normal projective variety, and $\sF\subsetneq T_X$ a foliation with $-K_{\sF}$ ample. If $\epsilon(-K_{\sF})\geq r^a$, then $\epsilon(-K_{\sF})=r^a$ and the log algebraic part $(F,D_F)$ of a general leaf of $\sF$ is isomorphic to $(\bP^{r^a},H)$, where $H$ is a hyperplane of $\bP^{r^a}$. Moreover, there is a common point in the closure of a general leaf of $\sF^a$. 
\end{cor}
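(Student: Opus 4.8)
The plan is to recognise this as the equality case of Theorem~\ref{t.bounding-Seshadri} and to combine the general log leaf machinery of Section~\ref{s.log-algebraic-part} with Proposition~\ref{t.variant-AD} and Theorem~\ref{t.Zhuang}. First, applying Theorem~\ref{t.bounding-Seshadri} to the decomposition $-K_{\sF}\equiv(-K_{\sF})+0$ yields $r^a\geq\epsilon(-K_{\sF})$, so the hypothesis $\epsilon(-K_{\sF})\geq r^a$ forces $\epsilon(-K_{\sF})=r^a$; and $r^a\geq 1$ because the ample divisor $-K_{\sF}$ has positive Seshadri constant by Lemma~\ref{l.properties-Seshadri}~\ref{i.bigness-Seshadri}.

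The heart of the argument is to transfer the extremal hypothesis to a general log leaf. Using Lemma~\ref{l.properties-Seshadri}~\ref{i.max-value-Seshadri}, pick a very general point $x\in X$ at which $\epsilon(-K_{\sF},x)=\epsilon(-K_{\sF})=r^a$; since the general leaves of $\sF^a$ dominate $X$ whereas the locus where the Seshadri function is non-maximal is a countable union of proper closed subvarieties, we may assume $x$ lies on the closure $\overline L$ of a general leaf of $\sF^a$ and is a smooth point of both $\overline L$ and $X$. Let $i\colon F\to\overline L\subset X$ be the normalisation, $(F,D_F)$ the log algebraic part of a general leaf of $\sF$ (so that $-(K_F+D_F)\sim_{\bQ}i^*(-K_{\sF})$), and $y\in F$ the point over $x$; as $i$ is an isomorphism near $y$, the pull-back and restriction invariance of the Seshadri constant (Lemma~\ref{l.properties-Seshadri}~\ref{i.pull-back-Seshadri} and \ref{i.restriction-Seshadri}) give $\epsilon\bigl(i^*(-K_{\sF}),y\bigr)\geq\epsilon(-K_{\sF},x)=r^a$. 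Hence $A\coloneqq-(K_F+D_F)$ is ample with $\epsilon(A)\geq\dim F=r^a$. If $(F,D_F)$ were klt, the first part of Proposition~\ref{t.variant-AD} (taking $P=0$) would say that $-K_{\sF}$ is not nef and big, contradicting ampleness; so $(F,D_F)$ is not klt, and the second part of Theorem~\ref{t.Zhuang}, applied to the $r^a$-dimensional pair $(F,D_F)$, produces a birational morphism $f\colon F\to\bP^{r^a}$ and a hyperplane $H\subset\bP^{r^a}$ with $D_F=f_*^{-1}H$ and $K_F+D_F=f^*(K_{\bP^{r^a}}+H)$.

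To finish, note that $f^*\sO_{\bP^{r^a}}(1)=-(K_F+D_F)=A$ is ample, so $f$ contracts no curve, hence is quasi-finite, hence finite; being a finite birational morphism onto the normal variety $\bP^{r^a}$ it is an isomorphism, and therefore $(F,D_F)\cong(\bP^{r^a},H)$. The pair $(\bP^{r^a},H)$ is log canonical, so the second part of Proposition~\ref{t.variant-AD} (again with $P=0$) gives a common point in the closure of the general leaves of $\sF^a$, as desired. I expect the only non-formal step to be the lower bound $\epsilon\bigl(i^*(-K_{\sF})\bigr)\geq r^a$ on the general log leaf, i.e.\ arranging that a Seshadri-maximal point of $X$ sits on a general leaf and pushing the estimate down to $F$; this is the restriction argument already implicit in the proof of Theorem~\ref{t.bounding-Seshadri}, and everything downstream is a direct citation of Proposition~\ref{t.variant-AD} and Theorem~\ref{t.Zhuang}.
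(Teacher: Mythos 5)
Your proposal is correct and follows essentially the same route as the paper: Theorem~\ref{t.bounding-Seshadri} for the equality $\epsilon(-K_{\sF})=r^a$, the restriction inequality to pass to the general log leaf, Proposition~\ref{t.variant-AD}(1) to rule out klt, Theorem~\ref{t.Zhuang}(2) to produce $f\colon F\to\bP^{r^a}$, ampleness to force $f$ to be an isomorphism, and Proposition~\ref{t.variant-AD}(2) for the common point. The only cosmetic difference is that you make the very-general-point/restriction step explicit (and deduce $r^a\geq 1$ from $\epsilon(-K_{\sF})>0$ rather than citing Campana--P\u{a}un directly), whereas the paper states the chain of inequalities more tersely.
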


\begin{proof}
	The equality $\epsilon(-K_{\sF})=r^a$ follows from Theorem \ref{t.bounding-Seshadri}. On the other hand, by Theorem \ref{t-CP-simplified}, we have $r^a>0$ as $-K_{\sF}$ is ample. Let $(F,D_F)$ be the log algebraic part of a general leaf of $\sF$. Then we have
	\begin{equation}
		\epsilon(-K_F-D_F) = \epsilon(-i^*K_{\sF}) \geq \epsilon(-K_{\sF}) \geq r^a=\dim(F).
	\end{equation}
    By Proposition \ref{t.variant-AD}, the pair $(F,D_F)$ is not klt. Then Theorem \ref{t.Zhuang} implies that there exists a birational morphism $\nu:F\rightarrow \bP^{r^a}$ such that there exits a hyperplane $H$ in $\bP^{r^a}$ satisfying $\nu^{-1}_*H=D_F$ and $\nu^*(K_{\bP^{r^a}}+H)=K_F+D_F$. As $i:F\rightarrow i(F)\subset X$ is the normalisation and $-K_{\sF}$ is ample, the anti-log canonical divisor
    \[
    -(K_F+D_F) \sim_{\bQ} -i^*K_{\sF}
    \]
    is ample. Moreover, as $-(K_{\bP^{r^a}}+H)$ is ample, it follows that $\nu$ is finite and hence $\nu$ is an isomorphism. As a consequence, the pair $(F,D_F)$ is lc and the result follows from Proposition \ref{t.variant-AD}.
\end{proof}

\begin{proof}[Proof of Theorem \ref{c.upper-bound-Seshadri-weak-Fano}]
	It follows from Theorem \ref{t.bounding-Seshadri}.
\end{proof}

The following theorem is a slight generalisation of Theorem \ref{c.RC-Folia-large-Seshadri}.

\begin{thm}
	\label{t.rationally-connectedness}
	Let $X$ be a $\bQ$-factorial normal projective variety and let $\sF\subsetneq T_X$ be a foliation such that $-K_{\sF} \equiv A + P$, where $A$ is a nef and big $\bR$-divisor and $P$ is a pseudoeffective $\bR$-divisor. If $\epsilon(A) > r^a - 1$, then the closure of a general leaf of the algebraic part $\sF^a$ of $\sF$ is rationally connected.
\end{thm}

\begin{proof}
	Let $0<\varepsilon\ll 1$ be a sufficiently small positive real number. After replacing $A$ by $(1-\varepsilon)A$ and $P$ by $(1-\varepsilon)A+P$, we may assume that $P$ is an effective $\bR$-divisor. Let $i:F\rightarrow i(F)\subset X$ be the normalisation of the closure of a general leaf of the algebraic part of $\sF$ and set $D_P\coloneqq i^*P$. Then we have
	\[
	K_F+D_F+D_P\sim_{\bR} i^*K_{\sF} + i^*P \equiv -i^*A,
	\]
	where $(F,D_F)$ is the log algebraic part of a general leaf of $\sF$. Since $\varepsilon$ is a small positive real number and $F$ is general, by assumption we still have $\epsilon(i^*A)>r^a-1=\dim(F)-1$ after replacing $A$ by $(1-\varepsilon)A$. Hence, it follows from Theorem \ref{t.Zhuang} that $F$ is rationally connected.
\end{proof}

\begin{proof}[Proof of Theorem \ref{c.RC-Folia-large-Seshadri}]
	It follows from Theorem \ref{t.rationally-connectedness}.
\end{proof}

\section{Fano foliations with maximal Seshadri constants}
\label{s.max-Seshadri}

In the section we study Fano foliations $\sF$ on smooth projective varieties $X$ such that $\epsilon(-K_{\sF})=r^a$ and in particular we will prove Theorem \ref{t.Seshadri-MaxValue}.

\subsection{Stability condition with respect to movable curve classes}

In this subsection we briefly recall some basic facts about stability of coherent sheaves with respect to a movable curve class, see \cite{GrebKebekusPeternell2016}. Given a normal projective variety $X$, we denote by $N_1(X)_{\bR}$ the space of numerical curve classes. A curve class $\alpha\in N_1(X)_{\bR}$ is called \emph{movable} if $D\cdot \alpha\geq 0$ for all effective Cartier divisors $D$ on $X$. 

Let $X$ be a $\bQ$-factorial normal projective variety and let $\alpha\in N_1(X)_{\bR}$ be a movable curve class. Similar to the classical case, given a torsion free coherent sheaf $\sF$ with positive rank on $X$, we can define the \emph{slope of $\sF$ with respect to $\alpha$} to be the real number
	\[
	\mu_{\alpha}(\sF) \coloneqq \frac{\det(\sF)\cdot \alpha}{\rank(\sF)}.
	\]
\begin{defn}
	Let $\sF$ be a non-zero torsion free coherent sheaf on a $\bQ$-factorial normal projective variety $X$ and let $\alpha\in N_1(X)_{\bR}$ be a movable curve class. The sheaf $\sF$ is $\alpha$-semistable (resp. $\alpha$-stable) if, for any subsheaf $\sE$ of $\sF$ such that $0<\rank(\sE)<\rank(\sF)$, one has
	\begin{center}
		$\mu_{\alpha}(\sE)\leq \mu_{\alpha}(\sF)$ (resp. $\mu_{\alpha}(\sE)<\mu_{\alpha}(\sF)$).
	\end{center}
\end{defn}   

A number of known results from the classical case are extended to this setting. For example, the existences of maximally destabilising subsheaf and Harder-Narasimhan filtration are proved in \cite{GrebKebekusPeternell2016}. More precisely, recall that the \emph{maximal and minimal slopes of $\sF$ with respect to $\alpha$} are defined as
\begin{center}
	$\mu_{\alpha}^{\max}(\sF)\coloneqq\sup\{\mu_{\alpha}(\sE)\,|\,0\not=\sE\subset \sF$ is a coherent subsheaf $\}$
\end{center}
and
\begin{center}
	$\mu_{\alpha}^{\min}(\sF)\coloneqq\inf\{\mu_{\alpha}(\sQ)\,|\,\sQ\not=0$ is a torsion-free quotient of $\sF\}$.
\end{center} 
By \cite[Proposition 2.22 and Corollary 2.24]{GrebKebekusPeternell2016}, there exists a unique non-zero coherent subsheaf $\sE$ of $\sF$ such that $\mu_{\alpha}(\sE)=\mu_{\alpha}^{\max}(\sF)$ and if $\sE'\subset \sF$ is any subsheaf with $\mu_{\alpha}(\sE')=\mu_{\alpha}^{\max}(\sF)$, then $\sE'\subset \sE$. We call this subsheaf $\sE$ the \emph{maximal destablising subsheaf of $\sF$ (with respect to $\alpha$)}. Moreover, by \cite[Corollary 2.26]{GrebKebekusPeternell2016}, there exists a unique \emph{Harder-Narasimhan filtration} of $\sF$; that is, a filtration
\[
0=\sF_0 \subsetneq \sF_1 \subsetneq \dots \subsetneq \sF_k=\sF,
\]
where each quotient $\sQ_i=\sF_i/\sF_{i-1}$ is torsion free, $\alpha$-semistable and where the sequence of slopes $\mu_{\alpha}(\sQ_i)$ is strictly decreasing. In particular, the sheaf $\sF_1$ is exactly the maximal destabilizing subsheaf of $\sF$. Moreover, for each $1\leq i\leq k$, we have
\begin{equation}
	\label{e.min-max-equality}
	\mu_{\alpha}^{\min}(\sF_i) = \mu_{\alpha}(\sQ_i) = \mu_{\alpha}^{\max}(\sF/\sF_{i-1}).
\end{equation}

Now we suppose that $\mu_{\alpha}^{\max}(\sF)>0$ and set $s\coloneqq \max\{1\leq i\leq k\,|\,\mu_{\alpha}(\sQ_i)>0\}$. Then we define the \emph{positive part of $\sF$ with respect to $\alpha$} to be the sheaf $\sF_{\alpha}^+\coloneqq \sF_s$. 

\begin{lem}[\protect{\cite[Corollary 2.18]{AraujoDruel2019}}]
	\label{l.integrability}
	Let $\sF$ be a foliation on a normal $\bQ$-factorial projective variety $X$ with $\mu_{\alpha}^{\max}(\sF)\geq 0$ for some movable class $\alpha$. Then $\sF_i$ is a foliation on $X$ whenever $\mu_{\alpha}(\sQ_i)\geq 0$.
\end{lem}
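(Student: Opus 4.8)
The idea is to check the two defining properties of a foliation for each $\sF_i$ with $\mu_{\alpha}(\sQ_i)\geq 0$. \emph{Saturation} comes for free: since the graded pieces $\sQ_{i+1},\dots,\sQ_k$ are torsion free, the iterated extension $\sF/\sF_i$ is torsion free, so $\sF_i$ is saturated in $\sF$; as $\sF$ is saturated in $T_X$, the exact sequence $0\to\sF/\sF_i\to T_X/\sF_i\to T_X/\sF\to 0$ has torsion-free ends, hence $T_X/\sF_i$ is torsion free and $\sF_i$ is saturated in $T_X$. So the real content is that $\sF_i$ is \emph{closed under the Lie bracket}, and since $\sF_k=\sF$ is already a foliation we may assume $i<k$.

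To detect the failure of integrability I would work on an open set $X^{\circ}\subseteq X$ with $X\setminus X^{\circ}$ of codimension $\geq 2$, over which $X$ is smooth and $\sF$ is locally free. There the Lie bracket induces, modulo $\sF_i$, an $\sO_X$-bilinear alternating pairing $\sF_i\times\sF_i\to T_X/\sF_i$; since $\sF$ is itself a foliation one has $[\sF_i,\sF_i]\subseteq[\sF,\sF]\subseteq\sF$, so this pairing takes values in $\sF/\sF_i$. Factoring through $\wedge^{2}\sF_i$ and extending over the codimension-two locus (the target being torsion free) one obtains an $\sO_X$-linear morphism
\[
\varphi_i\colon \wedge^{2}\sF_i\longrightarrow \sF/\sF_i,
\]
and $\sF_i$ is closed under the Lie bracket precisely when $\varphi_i=0$.

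The final step is a slope estimate. By \eqref{e.min-max-equality} we have $\mu_{\alpha}^{\min}(\sF_i)=\mu_{\alpha}(\sQ_i)$ and $\mu_{\alpha}^{\max}(\sF/\sF_i)=\mu_{\alpha}(\sQ_{i+1})$, and the Harder--Narasimhan slopes strictly decrease, so $\mu_{\alpha}(\sQ_{i+1})<\mu_{\alpha}(\sQ_i)$. On the other hand, the compatibility of $\alpha$-(semi)stability with tensor and exterior powers (\cite{GrebKebekusPeternell2016}) gives
\[
\mu_{\alpha}^{\min}\bigl(\wedge^{2}\sF_i\bigr)\ \geq\ 2\,\mu_{\alpha}^{\min}(\sF_i)\ =\ 2\,\mu_{\alpha}(\sQ_i).
\]
Assuming $\mu_{\alpha}(\sQ_i)\geq 0$, this yields $\mu_{\alpha}^{\min}(\wedge^{2}\sF_i)\geq 2\mu_{\alpha}(\sQ_i)\geq\mu_{\alpha}(\sQ_i)>\mu_{\alpha}(\sQ_{i+1})=\mu_{\alpha}^{\max}(\sF/\sF_i)$. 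A nonzero morphism of torsion-free sheaves has an image that is simultaneously a nonzero quotient of its source and a subsheaf of its target, which is impossible once the minimal slope of the source strictly exceeds the maximal slope of the target; hence $\varphi_i=0$, i.e.\ $[\sF_i,\sF_i]\subseteq\sF_i$, and $\sF_i$ is a foliation.

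The step I expect to require the most care is the slope bookkeeping in the movable-curve setting: one must know that $\alpha$-semistability is preserved under tensor products (equivalently $\mu_{\alpha}^{\min}(\sA\otimes\sB)=\mu_{\alpha}^{\min}(\sA)+\mu_{\alpha}^{\min}(\sB)$ and the analogous identity for exterior powers), which is exactly what the formalism of \cite{GrebKebekusPeternell2016} provides, via reduction to a general curve in a covering family whose class is proportional to $\alpha$. A secondary point, dealt with above by restricting to $X^{\circ}$, is that the construction of $\varphi_i$ is only transparent away from $X_{\sing}\cup\sing(\sF)$, after which one extends by torsion-freeness of the target. Everything else is formal.
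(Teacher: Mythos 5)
Your proof is correct, and it is the standard argument that the cited reference \cite[Corollary~2.18]{AraujoDruel2019} uses (the paper itself only quotes the result). All three ingredients are in place and correctly assembled: (i) saturation of $\sF_i$ in $T_X$ via torsion-freeness of the iterated extension $\sF/\sF_i$ and of $T_X/\sF$; (ii) the $\sO_X$-linear obstruction $\varphi_i\colon\wedge^{2}\sF_i\to\sF/\sF_i$ induced by the Lie bracket on a big open set, with the reduction to such an open set justified because $T_X/\sF_i$ is torsion-free, so vanishing of the bracket obstruction on a dense open already forces $[\sF_i,\sF_i]\subseteq\sF_i$ globally; and (iii) the slope comparison $\mu_{\alpha}^{\min}(\wedge^{2}\sF_i)\geq 2\mu_{\alpha}(\sQ_i)\geq\mu_{\alpha}(\sQ_i)>\mu_{\alpha}(\sQ_{i+1})=\mu_{\alpha}^{\max}(\sF/\sF_i)$, which kills $\varphi_i$ because the image of a nonzero map is a quotient of the source and a subsheaf of the target. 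You also correctly flagged the one genuine technical input, namely that $\alpha$-semistability is preserved under tensor and exterior powers, which in the movable-curve setting comes from the Mehta--Ramanathan-type restriction theorem of \cite{GrebKebekusPeternell2016}. The only slight overstatement is the remark that $\varphi_i$ ``extends over the codimension-two locus''; one does not need to extend the morphism at all --- it suffices that it vanishes on a big open set, and then saturation of $\sF_i$ in $T_X$ does the rest, exactly as you in effect argue.
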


\begin{thm}[\protect{\cite[Theorem 1.1]{CampanaPaun2019}}, compare it with Theorem \ref{t-CP-simplified}]
	\label{t.CP-Thm}
	Let $\sF$ be a foliation on a normal $\bQ$-factorial projective variety $X$. If $\mu_{\alpha}^{\min}(\sF)>0$ for some movable curve class $\alpha$, then $\sF$ is algebraically integrable and the closure of a general leaf is rationally connected.
\end{thm}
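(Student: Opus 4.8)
This is a deep theorem of Campana and P\u{a}un, see \cite[Theorem 1.1]{CampanaPaun2019}; we outline the strategy. The plan is to reduce the abstract positivity hypothesis $\mu^{\min}_\alpha(\sF)>0$ to the statement that the restriction $\sF|_C$ is an ample vector bundle for a sufficiently general complete intersection curve $C\subset X$, and then to invoke the algebraicity criterion of Bogomolov and McQuillan, in the form due to Kebekus, Sol\'a Conde and Toma: if $\sG\subsetneq T_W$ is a foliation on a normal projective variety $W$ and $C\subset W$ is a general complete intersection curve of sufficiently ample divisors with $\sG|_C$ ample, then $\sG$ is algebraically integrable and the closure of the leaf through a general point of $C$ is rationally connected. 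Since such curves $C$ cover $W$, this criterion yields both conclusions of the theorem at once. As a preliminary reduction one may replace $X$ by a resolution $\mu\colon X'\to X$ and $\sF$ by its pull-back $\mu^{-1}\sF\subsetneq T_{X'}$, the algebraic parts and the leaves being compatible with $\mu$ by Definition \ref{d.algebraic-part}; the compatibility of the slope hypothesis with this and with further birational modifications is part of the next step.

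The heart of the matter is to produce a curve $C$ with $\sF|_C$ ample. First, by the Boucksom--Demailly--P\u{a}un--Peternell description of the cone of movable curves as the closure of the convex cone generated by the classes $\pi_*(\widetilde H_1\cdots\widetilde H_{n-1})$, where $\pi\colon X'\to X$ ranges over birational morphisms and the $\widetilde H_i$ over ample divisors on $X'$, together with the fact that $\alpha\mapsto\mu^{\min}_\alpha(\sF)$ is concave and upper semicontinuous on the movable cone and hence continuous along segments ending at a boundary point, one reduces to the case where $\alpha$ is represented by a product $H_1\cdots H_{n-1}$ of very ample divisors (after possibly passing to a birational modification). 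In this case $\mu^{\min}_\alpha(\sF)>0$ says precisely that every Harder--Narasimhan slope of $\sF$ with respect to the polarisation is positive; by the Mehta--Ramanathan and Flenner restriction theorems, for a general complete intersection curve $C$ cut out by members of $|m_iH_i|$ with $m_i\gg 1$ the Harder--Narasimhan filtration of $\sF|_C$ is the restriction of that of $\sF$, so $\mu^{\min}(\sF|_C)>0$. Consequently every quotient bundle $Q$ of $\sF|_C$ has $\deg Q=\rank(Q)\mu(Q)\geq\rank(Q)\mu^{\min}(\sF|_C)>0$, and Hartshorne's ampleness criterion for vector bundles on a curve shows that $\sF|_C$ is ample. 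Feeding this into the Bogomolov--McQuillan--Kebekus--Sol\'a Conde--Toma criterion completes the proof.

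It is worth noting that the algebraic integrability alone can be obtained from the already-quoted Theorem \ref{t-CP-simplified}: if $\sF$ were not algebraically integrable, then $\sQ\coloneqq(\sF/\sF^a)^{**}$ would be a non-zero torsion-free quotient of $\sF$ of rank strictly between $0$ and $\rank\sF$, so that $-K_{\sQ}\cdot\alpha=\rank(\sQ)\mu_\alpha(\sQ)\geq\rank(\sQ)\mu^{\min}_\alpha(\sF)>0$; on the other hand, running the family-of-leaves construction of Section \ref{s.log-algebraic-part}, applying Theorem \ref{t-CP-simplified} to the purely transcendental part $\sG$ on $Z$, and combining the negativity lemmas (Lemmas \ref{l.negativity} and \ref{l.generic-negativity}) exactly as in the proof of Lemma \ref{l.negative-components} shows that $K_{\sQ}$ is pseudoeffective, whence $K_{\sQ}\cdot\alpha\geq 0$ for the movable class $\alpha$---a contradiction. (Here again one tests pseudoeffectivity against complete intersection classes on birational models via Boucksom--Demailly--P\u{a}un--Peternell.) The rational connectedness of the general leaf, however, still seems to require the Bogomolov--McQuillan input described above.

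The main obstacle is the control of the function $\alpha\mapsto\mu^{\min}_\alpha(\sF)$ on the movable cone: one must descend from an arbitrary movable class---possibly lying on the boundary of the cone, or obtained by push-forward from a higher birational model---to a genuine, very ample, complete intersection polarisation on a fixed birational model, all the while keeping the minimal slope positive and keeping track of how $\sF$ (and hence its Harder--Narasimhan data) transforms. Once this is done, the restriction theorem, Hartshorne's criterion and the Bogomolov--McQuillan criterion enter as essentially black boxes.
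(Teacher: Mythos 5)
The paper does not prove Theorem~\ref{t.CP-Thm}: it is quoted verbatim from \cite[Theorem 1.1]{CampanaPaun2019}, just as Theorem~\ref{t-CP-simplified} is, so there is no in-paper argument to compare against. I will therefore judge your sketch on its own terms as an outline of the Campana--P\u{a}un proof.

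Your sketch correctly names the two ends of the argument (ampleness of $\sF|_C$ along a well-chosen curve, then the Bogomolov--McQuillan/Kebekus--Sol\'a Conde--Toma algebraicity-and-rational-connectedness criterion), but the claimed reduction from a general movable class $\alpha$ to a complete-intersection polarisation is precisely the non-trivial content of Campana--P\u{a}un, and your continuity argument does not deliver it. First, $\mu^{\min}_\alpha(\sF)=\inf_{\sQ}\mu_\alpha(\sQ)$ is an infimum of linear forms, hence concave and upper semicontinuous; upper semicontinuity at a boundary class $\alpha$ with $\mu^{\min}_\alpha>0$ does not produce a nearby complete-intersection class $\alpha'$ with $\mu^{\min}_{\alpha'}>0$, and the complete-intersection classes on $X$ itself need not be dense in the movable cone. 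Second, for a BDPP class $\alpha=\pi_*(\widetilde H_1\cdots\widetilde H_{n-1})$ coming from a birational model $\pi\colon X'\to X$, the slope of $\sF$ on $X$ against $\alpha$ is \emph{not} the slope of $\pi^{-1}\sF$ on $X'$ against $\widetilde H_1\cdots\widetilde H_{n-1}$: pull-back followed by saturation shifts determinants by $\pi$-exceptional divisors, and the Harder--Narasimhan filtration does not transform cleanly; so Mehta--Ramanathan/Flenner on $X'$ does not immediately give a curve on which $\sF|_C$ is ample. Campana and P\u{a}un do not make this reduction; they prove a positivity theorem for relative determinants that works for arbitrary movable classes, and the Bogomolov--McQuillan step is also carried out for movable classes rather than classical polarisations. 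You flag this as ``the main obstacle'' at the end, which is honest, but the body of the sketch reads as if semicontinuity plus BDPP closes the gap, and it does not. Your secondary remark is sound: the algebraic integrability half follows from Theorem~\ref{t-CP-simplified} alone, since if $\sF\neq\sF^a$ then $\sQ=(\sF/\sF^a)^{**}$ is a non-zero torsion-free quotient with $\mu_\alpha(\sQ)\geq\mu^{\min}_\alpha(\sF)>0$, while $K_\sQ=\rho_*q^*K_\sG$ is pseudoeffective (by Theorem~\ref{t-CP-simplified} applied to $\sG$, and pushforward of pseudoeffective classes---the negativity lemmas are not needed for this), forcing $\mu_\alpha(\sQ)\leq0$, a contradiction; but since both cited results are the same Campana--P\u{a}un theorem in two guises, this is a consistency check rather than an independent proof, and rational connectedness of the leaf still requires the full machinery.
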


We also need the following useful criterion for uniruledness.

\begin{lem}[\protect{\cite[Theorem 2.7]{BoucksomDemaillyPuaunPeternell2013}}]
	\label{l.uniruledness-criterion}
	Let $X$ be a projective manifold. If there exists a foliation $\sF\subset T_X$ and a movable curve class $\alpha$ such that $\mu_{\alpha}^{\max}(\sF)>0$, then $X$ is uniruled.
\end{lem}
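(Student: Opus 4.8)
The plan is to reduce to the rationally connected case provided by Theorem \ref{t.CP-Thm} by passing to the positive part of $\sF$. First I would invoke Lemma \ref{l.integrability}: since $\mu_{\alpha}^{\max}(\sF)>0$, in particular $\mu_{\alpha}^{\max}(\sF)\geq 0$, so every term $\sF_i$ of the Harder--Narasimhan filtration with $\mu_{\alpha}(\sQ_i)\geq 0$ is itself a foliation on $X$. Set $s\coloneqq \max\{1\leq i\leq k\,|\,\mu_{\alpha}(\sQ_i)>0\}$, which is well-defined and at least $1$ because $\mu_{\alpha}(\sQ_1)=\mu_{\alpha}^{\max}(\sF)>0$; then the positive part $\sF_{\alpha}^+=\sF_s$ is a foliation on $X$. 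By the equalities \eqref{e.min-max-equality} we have $\mu_{\alpha}^{\min}(\sF_s)=\mu_{\alpha}(\sQ_s)>0$, so $\sF_{\alpha}^+$ has strictly positive minimal slope with respect to the movable class $\alpha$.

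Next I would apply Theorem \ref{t.CP-Thm} to the foliation $\sF_{\alpha}^+$: since $\mu_{\alpha}^{\min}(\sF_{\alpha}^+)>0$, the foliation $\sF_{\alpha}^+$ is algebraically integrable and the closure of a general leaf is rationally connected. A rationally connected variety of positive dimension is uniruled; and $\sF_{\alpha}^+$ has positive rank (it is non-zero and torsion free, being a subsheaf of $T_X$, and $\mu_{\alpha}(\sF_{\alpha}^+)$ is defined), so the general leaf is a positive-dimensional rationally connected subvariety of $X$. Since these leaves sweep out $X$ (a general point of $X$ lies on one), $X$ is covered by rational curves, hence uniruled. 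Here one should note that $X$ being a projective manifold guarantees there is no pathology in speaking of leaves and of the general leaf closure; the relevant structure theory for algebraically integrable foliations and their family of leaves \eqref{e.family-leaves} applies.

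The only genuine point requiring care — the ``main obstacle'', though it is mild — is checking that the positive part $\sF_{\alpha}^+$ is non-trivial and has positive rank, i.e.\ that $s\geq 1$, which is exactly the content of the hypothesis $\mu_{\alpha}^{\max}(\sF)>0$ (strict inequality, not merely $\geq 0$). One must also confirm that Lemma \ref{l.integrability} does supply $\sF_s$ as a foliation: this needs $\mu_{\alpha}(\sQ_i)\geq 0$ for all $i\leq s$, which holds because the slopes $\mu_{\alpha}(\sQ_i)$ are strictly decreasing and $\mu_{\alpha}(\sQ_s)>0$. With these checks in place the argument is complete; no computation beyond invoking the three cited results is needed.
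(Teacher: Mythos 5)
Your proof is correct and essentially follows the paper's approach: extract from the Harder--Narasimhan filtration a sub-foliation with strictly positive minimal slope (you use the positive part $\sF_{\alpha}^+=\sF_s$, the paper uses the maximally destabilising subsheaf $\sF_1$, which by $\alpha$-semistability has $\mu_{\alpha}^{\min}(\sF_1)=\mu_{\alpha}(\sF_1)=\mu_{\alpha}^{\max}(\sF)>0$), invoke Lemma \ref{l.integrability} for integrability of the subsheaf as a foliation, then apply Theorem \ref{t.CP-Thm} to conclude rational connectedness of general leaves and hence uniruledness of $X$. The choice of $\sF_1$ versus $\sF_s$ is an inessential variant; both work for exactly the reasons you check.
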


\begin{proof}
	Let $\sF'$ be the maximally destabilising subsheaf of $\sF$ with respect to $\alpha$. Then we have $c_1(\sF')\cdot \alpha>0$ and therefore $\det(\sF'^*)$ is not pseudoeffective. Applying \cite[Theorem 2.7]{BoucksomDemaillyPuaunPeternell2013} yields that $X$ is uniruled.
\end{proof}

\subsection{Minimal rational curves}

Let $X$ be a uniruled projective manifold. Then there exists a covering family $\cK$ of minimal rational curves; that is, an irreducible component of $\text{Ratcurves}^n(X)$ such that for a general point $x\in X$, the closed subset $\cK_x$ of $\cK$ parametrising curves through $x$ is non-empty and propre, see \cite{Kollar1996} for the details. Let $C$ be a general rational curve parametrised by $\cK$. Then $C$ is standard. In other words, there exists a non-negative integer $d$ such that
\[
f^*T_X \cong \sO_{\bP^1}(2)\oplus \sO_{\bP^1}(1)^{\oplus d} \oplus \sO_{\bP^1}^{\oplus (n-d-1)},
\]
where $f:\bP^1\rightarrow C$ is the normalisation. 

Given a covering family $\cK$ of minimal rational curves on a uniruled projective manifold $X$, let $\overline{\cK}$ be the closure of $\cK$ in $\Chow(X)$. Two points $x$, $y\in X$ are said to be \emph{$\cK$-equivalent} if they can be connected by a chain of $1$-cycles from $\overline{\cK}$. This defines an equivalence relation on $X$. By \cite{Campana1992} (see also \cite[IV, 4.16]{Kollar1996}), there exists a propre surjective equidimensional morphism $\pi^\circ:X^{\circ} \rightarrow T^{\circ}$ from a dense open subset of $X$ onto a normal variety whose fibres are $\cK$-equivalence classes. We call this map the \emph{$\cK$-rationally connected quotient} of $X$.

\begin{thm}[\protect{\cite{Araujo2006} and \cite[Proposition 2.7]{AraujoDruelKovacs2008}}]
	\label{t.Araujo-ProjBundle}
	Let $X$ be a uniruled projective manifold equipped with a covering family $\cK$ of minimal rational curves. Let $\pi^{\circ}:X^{\circ}\rightarrow T^{\circ}$ be the $\cK$-rationally connected quotient of $X$. Assume that there exists a subsheaf $\sF\subset T_X$ such that $f^*\sF$ is ample, where $f:\bP^1\rightarrow C$ is the normalisation of a general curve parametrised by $\cK$. Then, after shrinking $X^{\circ}$ and $T^{\circ}$ if necessary, $\pi^{\circ}$ becomes a $\bP^{d+1}$-bundle, $\sF|_{X^{\circ}}\subset T_{X^{\circ}/T^{\circ}}$ and every rational curve parametrised by $\cK$ meeting $X^{\circ}$ is a line contained in the fibres of $\pi^{\circ}$.
\end{thm}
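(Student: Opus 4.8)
The plan is to reproduce Araujo's projective-bundle mechanism, reducing everything to the statement that the variety of minimal rational tangents at a general point is a linear subspace of the projectivised tangent space. Fix a general $x\in X$, let $\cK_x$ be an irreducible component of the subscheme of $\cK$ parametrising curves through $x$, and let $\tau_x\colon\cK_x\to\bP(T_xX)$ be the tangent map $[C]\mapsto[T_xC]$, with image the variety of minimal rational tangents $\cC_x$. Since the general $C\in\cK_x$ is standard, $\cK_x$ is smooth of dimension $-K_X\cdot C-2=d$; by Kebekus's finiteness of $\tau_x$ and its birationality onto the image (Hwang--Mok), $\cC_x$ is a $d$-dimensional subvariety of $\bP(T_xX)$.

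Next I would extract what positivity gives. Since $f^*T_X\cong\sO_{\bP^1}(2)\oplus\sO_{\bP^1}(1)^{\oplus d}\oplus\sO_{\bP^1}^{\oplus(n-d-1)}$ and $f^*\sF$ is ample, the composite $f^*\sF\hookrightarrow f^*T_X\twoheadrightarrow\sO_{\bP^1}^{\oplus(n-d-1)}$ vanishes, because an ample bundle on $\bP^1$ has no nonzero map to a trivial bundle; hence $f^*\sF$ lies in the positive part $\cP\coloneqq\sO_{\bP^1}(2)\oplus\sO_{\bP^1}(1)^{\oplus d}$ of $f^*T_X$, and in particular $\rank\sF\le d+1$. (For orientation only: restricting the Harder--Narasimhan filtration with respect to the movable class $[C]$ and applying Lemma \ref{l.integrability} shows that, up to saturation, $\sF$ is a foliation, and ampleness of $f^*\sF$ forces its minimal slope against $[C]$ to be positive, so Theorem \ref{t.CP-Thm} makes it algebraically integrable with rationally connected general leaf. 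This is not used below, but explains the shape of the conclusion.)

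The crux is to upgrade ``$\cC_x$ has dimension $d$'' to ``$\cC_x$ is a linear $\bP^d$''. This is the deformation-theoretic heart of \cite{Araujo2006} and \cite[\S 2]{AraujoDruelKovacs2008}: using the standardness of $C$ and the ampleness of $f^*\sF$ one shows that the deformations of $C$ through $x$ are unobstructed to the appropriate order and sweep out a linear subspace, so that $\cC_x$ spans and equals a $\bP^d$. Granting this, the classical bootstrap (Cho--Miyaoka--Shepherd-Barron, Kebekus, Araujo) applies: the union of the curves of $\cK$ through $x$ is a $\bP^{d+1}$, which is exactly the $\cK$-equivalence class of $x$; carrying this out over a dense open subset $T^\circ$ of the quotient and using that $\pi^\circ\colon X^\circ\to T^\circ$ is equidimensional (Campana), one may shrink $X^\circ$ and $T^\circ$ so that $\pi^\circ$ is a $\bP^{d+1}$-bundle. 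Any curve of $\cK$ meeting $X^\circ$ joins $\cK$-equivalent points, hence lies in a fibre $F\cong\bP^{d+1}$, and since its $(-K_X)$-degree (constant along $\cK$) equals $d+2=-K_F\cdot C$ it is a line in $F$. Finally, for a general point of $F$ and a general line $C$ through it we have $N_{F/X}|_C\cong\sO_{\bP^1}^{\oplus(n-d-1)}$, so the composite $f^*\sF\hookrightarrow f^*T_X\to N_{F/X}|_C$ again vanishes and $\sF|_C\subseteq T_F|_C$; by generality this yields $\sF|_{X^\circ}\subseteq T_{X^\circ/T^\circ}$.

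I expect the main obstacle to be the linearity step: squeezing out of ``$f^*\sF$ ample'' enough second-order information --- control of the obstruction to deforming $C$, equivalently of the second fundamental form of $\cK_x$ along $C$ --- to conclude that $\cC_x$ is genuinely a linear subspace and not merely a $d$-dimensional variety. A secondary, more technical point is the passage from the set-theoretic fibration by $\cK$-equivalence classes to a genuine $\bP^{d+1}$-bundle over an open subset of the quotient (flatness and properness of the family of equivalence classes, plus a possible further shrinking of $T^\circ$), together with checking that this bundle is the one induced by $\pi^\circ$, so that the ``lines in the fibres'' assertion and the inclusion $\sF\subseteq T_{X^\circ/T^\circ}$ refer to the same structure.
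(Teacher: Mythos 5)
The paper does not prove this statement itself; it is quoted from \cite{Araujo2006} and \cite[Proposition 2.7]{AraujoDruelKovacs2008}, so there is no in-text argument against which to match you line by line. As a stand-alone proposal, however, yours has a hole exactly where the difficulty is: you isolate the ``crux'' --- upgrading $\dim \cC_x = d$ to $\cC_x$ being a linear $\bP^d$ --- and then write ``granting this.'' Everything downstream of that sentence (CMSB, the $\bP^{d+1}$-bundle structure over a shrunk $T^\circ$, curves of $\cK$ being lines in fibres, the vanishing of $f^*\sF \to N_{F/X}|_C$ giving $\sF|_{X^\circ} \subset T_{X^\circ/T^\circ}$) is a reasonable outline, but it all depends on the step you have granted. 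That is a genuine gap, not a technicality.

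I would also push back on your description of what the cited proof actually does. Araujo's argument does not first establish linearity of the VMRT via a second-order analysis of the tangent map and then deduce the bundle structure; the logic runs the other way around. The hypothesis that $f^*\sF$ is ample is fed into the structure theory of the family $\cK$ (Kebekus; unsplitness of a minimal covering family; smoothness of $\cK_x$ at points parametrising standard curves) to show that for general $x$ the locus $\mathrm{Locus}(\cK)_x$ is an irreducible $(d+1)$-dimensional projective variety carrying a minimal covering family of rational curves of anticanonical degree $d+2$; Cho--Miyaoka--Shepherd-Barron then identifies it with $\bP^{d+1}$, and linearity of $\cC_x$, as well as the identification of the $\cK$-equivalence class of $x$ with $\mathrm{Locus}(\cK)_x$ (so that chains of length $>1$ add nothing), are outputs of this, not inputs. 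Your ``secondary technical point'' about passing from equivalence classes to an honest $\bP^{d+1}$-bundle lives in the same circle of ideas and is part of the work, not a routine flatness check after the fact. So while the list of ingredients in your sketch overlaps with the cited proof (positivity of $f^*\sF$ forcing it into the positive part of $f^*T_X$, CMSB, lines in fibres, verticality of $\sF$), the order of logical dependency is reversed relative to \cite{Araujo2006}, and the one step you defer is the theorem.
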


\subsection{Proof of Theorem \ref{t.Seshadri-MaxValue}}

From now on let $X$ be a projective manifold such that there exists a foliation $\sF\subsetneq T_X$ with $-K_{\sF}$ ample. Then $X$ is uniruled by Lemma \ref{l.uniruledness-criterion}. Fix a covering family $\cK$ of minimal rational curves on $X$. Let $\alpha=[C]$ be the numerical class of a general minimal rational curve $C$ parametrised by $\cK$. Denote by $\sF_{\alpha}^+$ the positive part of $\sF$ with respect to $\alpha$. By Lemma \ref{l.integrability}, Theorem \ref{t.CP-Thm} and \eqref{e.min-max-equality}, the sheaf $\sF_{\alpha}^+$ is an algebraically integrable foliation on $X$ and therefore $\sF_{\alpha}^+\subset \sF^a$. In particular, we have $r^+\leq r^a$, where $r^+$ and $r^a$ are the ranks of $\sF_{\alpha}^+$ and $\sF^a$, respectively. Denote by $f:\bP^1\rightarrow C$ the normalisation of the standard rational curve $C$ and write 
\[
f^*T_X\cong \sO_{\bP^1}(2)\oplus \sO_{\bP^1}(1)^{\oplus d} \oplus\sO_{\bP^1}^{\oplus (n-d-1)}.
\]

\subsubsection{Step 1. Splitting type of $\sF^+_{\alpha}$}

We determine the possibilities of the splitting types of the positive part $\sF^+_{\alpha}$ along the general minimal rational curve $C$.

\begin{claim}
	\label{c.splitting-types}
If $\epsilon(-K_{\sF}) = r^a$, then one of the following statements holds.
	\begin{enumerate}
		\item \label{l.splitting-1}$r^a=r^+ + 1$ and $f^*\sF_{\alpha}^+\cong \sO_{\bP^1}(2)\oplus \sO_{\bP^1}(1)^{\oplus (r^+ - 1)}$.
		
		\item \label{l.splitting-2}$r^a=r^+$ and $f^*\sF_{\alpha}^+\cong \sO_{\bP^1}(2)\oplus \sO_{\bP^1}(1)^{\oplus (r^+ - 1)}$.
		
		\item \label{l.splitting-3}$r^a=r^+$ and $f^*\sF_{\alpha}^+ \cong \sO_{\bP^1}(2)\oplus \sO_{\bP^1}(1)^{\oplus (r^+ -2)} \oplus \sO_{\bP^1}$, $r^+\geq 2$.
		
		\item \label{l.splitting-4}$r^a=r^+$ and $f^*\sF_{\alpha}^+ \cong \sO_{\bP^1}(1)^{\oplus r^+}$.
	\end{enumerate}
\end{claim}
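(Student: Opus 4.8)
The plan is to bound the degrees $f^*\sF_{\alpha}^+$ using the Seshadri hypothesis together with the standard splitting of $f^*T_X$, and then rule out the remaining numerical possibilities. First I would observe that $f^*\sF_{\alpha}^+$ is a subsheaf of the standard bundle $\sO_{\bP^1}(2)\oplus \sO_{\bP^1}(1)^{\oplus d}\oplus \sO_{\bP^1}^{\oplus(n-d-1)}$, so its splitting type has summands of degree at most $2$, with at most one summand of degree $2$ and at most $d$ summands of positive degree. Write $f^*\sF_{\alpha}^+\cong \bigoplus_{j=1}^{r^+}\sO_{\bP^1}(a_j)$ with $2\geq a_1\geq\dots\geq a_{r^+}\geq 0$ (the $a_j$ are $\geq 0$ because $\sF_{\alpha}^+$ is the positive part, hence $\mu_\alpha$-nonnegative on each graded piece; in fact $f^*\sF_{\alpha}^+$ is nef along $C$). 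The key numerical input is the Seshadri inequality: since $\epsilon(-K_{\sF})=r^a$ and $C$ is a smooth rational curve (so $\mult_{x}C=1$ for general $x\in C$), we get $-K_{\sF}\cdot C\geq \epsilon(-K_{\sF})=r^a$, i.e. $\deg f^*\det(\sF)^{*}=\sum a_j + (\text{contribution of }\sF/\sF_{\alpha}^+\text{ to }-K_\sF\cdot C)\geq r^a$. Here I would use that $-K_{\sF}\cdot C = \deg f^{[*]}\det\sF = \deg f^*\sF_{\alpha}^+ + \deg f^*\bigl(\sF/\sF_{\alpha}^+\bigr)$ and that the quotient $\sF/\sF_{\alpha}^+$ has $\mu_\alpha^{\max}\leq 0$ by \eqref{e.min-max-equality}, so $\deg f^*(\sF/\sF_{\alpha}^+)\leq 0$, giving $\sum_{j=1}^{r^+} a_j\geq r^a\geq r^+$.

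Next I would combine $\sum a_j\geq r^+$ with the constraints $a_j\leq 2$, at most one $a_j=2$, and $r^+\leq r^a$. If all $a_j\leq 1$ then $\sum a_j\leq r^+$, forcing $\sum a_j=r^a=r^+$ and $a_j=1$ for all $j$: this is case \ref{l.splitting-4}. If some $a_j=2$ (necessarily $a_1=2$ and unique), then $\sum a_j = 2 + \#\{j\geq 2: a_j=1\}\geq r^a$. Since the summands $a_2,\dots,a_{r^+}$ are each $0$ or $1$, writing $k=\#\{j\geq 2: a_j=1\}$ we get $2+k\geq r^a\geq r^+ = 1+k+\#\{j\geq 2: a_j=0\}$, so $\#\{j\geq 2: a_j=0\}\leq 1$. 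Thus either all of $a_2,\dots,a_{r^+}$ equal $1$ (no zero summand) or exactly one of them is $0$. In the first sub-case, $\sum a_j = r^+ + 1$, so $r^a\in\{r^+, r^++1\}$; if $r^a=r^++1$ we land in \ref{l.splitting-1}, and if $r^a=r^+$ we land in \ref{l.splitting-2}. In the second sub-case, $\sum a_j=r^+$, forcing $r^a=r^+$ and the splitting $\sO_{\bP^1}(2)\oplus\sO_{\bP^1}(1)^{\oplus(r^+-2)}\oplus\sO_{\bP^1}$ with $r^+\geq 2$ (one needs $r^+\geq 2$ for a zero summand to exist alongside the degree-$2$ summand): this is case \ref{l.splitting-3}.

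The point I expect to need the most care is the identification $-K_{\sF}\cdot C = \deg f^*\sF_{\alpha}^+ + \deg f^*(\sF/\sF_{\alpha}^+)$ together with $\deg f^*(\sF/\sF_{\alpha}^+)\leq 0$: the sheaf $\sF/\sF_{\alpha}^+$ need not be locally free, and $C$ is only a general member of $\cK$, so one must argue that for general $C$ the restriction behaves well (e.g. $C$ avoids the non-locally-free locus and $\mu_\alpha^{\max}(\sF/\sF_{\alpha}^+)\leq 0$ translates into $\deg f^*(\sF/\sF_{\alpha}^+)\leq 0$ along $C$). The cleanest route is to use that $\det f^*\sF = f^{[*]}\det\sF$ has degree $-K_\sF\cdot C$, and that the Harder--Narasimhan data of $f^*\sF$ refines that of $\sF$ along the general curve; then the positive-degree part of $f^*\sF$ coincides with $f^*\sF_\alpha^+$ and its complement has non-positive degree. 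Once the degree bookkeeping is in place, the rest is the elementary enumeration above, using $\epsilon(-K_\sF)=r^a$ exactly once to produce the inequality $\sum a_j\geq r^a$, and the standardness of $C$ to cap the $a_j$ at $2$ with multiplicity one.
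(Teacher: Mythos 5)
Your argument is essentially the paper's proof, differing only in presentation. Both use the same two ingredients: (i) $K_{\sQ}\cdot\alpha\geq 0$ for $\sQ=\sF/\sF_\alpha^+$ (coming from \eqref{e.min-max-equality}) to get $-K_{\sF_\alpha^+}\cdot C\geq -K_{\sF}\cdot C\geq\epsilon(-K_{\sF})=r^a$, and (ii) the constraints on the splitting type of a subbundle of the standard bundle $f^*T_X$. The paper then combines the upper bound $-K_{\sF_\alpha^+}\cdot C\leq r^++1$ (forced by $a_1\leq 2$, $a_i\leq 1$ for $2\leq i\leq d+1$, $a_i\leq 0$ for $i\geq d+2$) with $r^a\leq -K_{\sF_\alpha^+}\cdot C$ and $r^+\leq r^a$, and says the rest is an easy computation; your proof makes that enumeration explicit.

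Two small inaccuracies in the setup, neither fatal. First, the assertion that $a_j\geq 0$ ``because $\sF_\alpha^+$ is the positive part, hence $\mu_\alpha$-nonnegative on each graded piece'' is not a valid justification: $\mu_\alpha^{\min}(\sF_\alpha^+)>0$ is a statement about quotient sheaves on $X$ with respect to the class $\alpha$, and does not by itself control the splitting type of the restriction to a single minimal rational curve (which is not a general complete-intersection curve in the sense of Mehta--Ramanathan). The correct way to see $a_j\geq 0$ is exactly as in the paper: from the subbundle constraints ($a_i\leq 1$ for $i\geq 2$, and $a_i\leq 0$ for $i\geq d+2$) together with $\sum a_j\geq r^a\geq r^+$, a negative $a_j$ would drop the sum strictly below $r^+$, a contradiction. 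Your later enumeration would go through unchanged if you replaced the false a priori claim $a_j\geq 0$ by this forced conclusion. Second, ``at most $d$ summands of positive degree'' should be $d+1$ (the $\sO_{\bP^1}(2)$ summand also contributes); again this is harmless for the computation. Your remark that $\deg f^*(\sF/\sF_\alpha^+)$ requires care because the quotient need not be locally free is well taken; the paper sidesteps this by working with $K_{\sQ}\cdot\alpha$ (the intersection of the determinant with the curve class), which is exactly the cleaner formulation you describe.
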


\begin{proof}[Proof of Claim \ref{c.splitting-types}]
	Let $\sQ$ be the quotient $\sF/\sF_{\alpha}^+$. Let $K_{\sQ}$ be a Weil divisor on $X$ such that $\sO_X(-K_{\sQ})\cong \det(\sQ)$. Then we have $K_{\sQ}\cdot \alpha\geq 0$ by the definition of $\sF_{\alpha}^+$ and \eqref{e.min-max-equality}. In particular, we get
	\[
	-K_{\sF_{\alpha}^+} \cdot \alpha \geq -K_{\sF} \cdot \alpha = -K_{\sF}\cdot C \geq  \epsilon(-K_{\sF}) = r^a.
	\]
	On the other hand, as $\sF_{\alpha}^+$ is saturated in $T_X$ and $C$ is general, by \cite[II, Proposotion 3.7]{Kollar1996}, we can assume that $f^*\sF_{\alpha}^+$ is a subbundle of $f^*T_X$. Write 
	\[
	f^*\sF_{\alpha}^+\cong \sO_{\bP^1}(a_1)\oplus \dots \oplus \sO_{\bP^1}(a_{r^+})
	\]
	with $a_1\geq \dots \geq a_{r^+}$. Since $C$ is standard, we have $a_1\leq 2$, $a_i\leq 1$ if $2\leq i\leq d+1$ and $a_i\leq 0$ if $i\geq d+2$, where $d=-K_X\cdot C - 2$. In particular, one obtains
	\[
	-K_{\sF_{\alpha}^+} \cdot C \leq r^+ + 1
	\] 
	with equality if and only if 
	\[
	f^*\sF_{\alpha}^+\cong \sO_{\bP^1}(2)\oplus \sO_{\bP^1}(1)^{\oplus (r^+-1)}.
	\]
	On the other hand, as $r^+\leq r^a$, we get
	\[
	r^a + 1\geq r^+ + 1 \geq -K_{\sF_{\alpha}^+}\cdot \alpha \geq  r^a.
	\]
	If $r^a=r^+ + 1$, then we have $-K_{\sF_{\alpha}^+}\cdot C=r^+ + 1$ and we are in Case \ref{l.splitting-1}. On the other hand , if $r^a=r^+$, then we have $r^+ + 1 \geq -K_{\sF_{\alpha}^+}\cdot C\geq r^+$ and the result follows from an easy computation.
\end{proof}

\subsubsection{Step 2. General minimal rational curves are not tangent to $\sF^+_{\alpha}$.}
 
	\begin{claim}
		\label{c.tangency-minimal-rational-curve}
		Let $C$ be a general minimal rational curve parametrised by $\cK$ with normalisation $f:\bP^1\rightarrow C$. Then $T_{\bP^1}$ is not contained in $f^*\sF_{\alpha}^+$.
	\end{claim}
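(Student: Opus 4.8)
The plan is to argue by contradiction: suppose $T_{\bP^1}\subseteq f^*\sF_\alpha^+$, so that the general minimal rational curve $C$ is everywhere tangent to the algebraically integrable foliation $\sF_\alpha^+$. Case~\ref{l.splitting-4} of Claim~\ref{c.splitting-types} is then immediately impossible, since there $f^*\sF_\alpha^+\cong\sO_{\bP^1}(1)^{\oplus r^+}$ receives no non-zero map from $\sO_{\bP^1}(2)=T_{\bP^1}$. In the three remaining cases, tangency places $C$ inside the closure $F$ of a general leaf of $\sF_\alpha^+$, hence inside the closure of a general leaf of $\sF^a\supseteq\sF_\alpha^+$; since $-K_{\sF}$ is ample and $\epsilon(-K_{\sF})=r^a$, Corollary~\ref{c.log-leaf-maxSeshadri} applies and tells us that the log algebraic part of a general leaf of $\sF$ is $(\bP^{r^a},H)$ with $H$ a hyperplane. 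This rigid model is what I would contradict, case by case, keeping the notation $(F,\Delta_F)$, $(F,D_F)$ of Section~\ref{s.log-algebraic-part} for the general log leaf of $\sF^a$ and the log algebraic part of $\sF$, together with the relation $D_F-\Delta_F\sim_{\bQ}K_{\sF/\sF^a}|_F$ recorded there.

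In Case~\ref{l.splitting-3} we have $r^a=r^+$, hence $\sF^a=\sF_\alpha^+$ (both are saturated of the same rank) and $F\cong\bP^{r^a}$ with $r^a=r^+\ge 2$. Restricting $K_F+\Delta_F\sim_{\bQ}i^*K_{\sF^a}$ to $C\subseteq F$ and using $-K_{\sF^a}\cdot C=\deg f^*\sF_\alpha^+=r^a$ together with $0\le\Delta_F\le D_F=H$ forces $\deg_F(C)=1$, i.e. $C$ is a line in $\bP^{r^a}$. But $\sF^a$ restricts to $T_F=T_{\bP^{r^a}}$ along its own general leaf, so $f^*\sF_\alpha^+=f^*T_{\bP^{r^a}}|_C\cong\sO_{\bP^1}(2)\oplus\sO_{\bP^1}(1)^{\oplus(r^a-1)}$, contradicting the splitting type $\sO_{\bP^1}(2)\oplus\sO_{\bP^1}(1)^{\oplus(r^a-2)}\oplus\sO_{\bP^1}$ of Case~\ref{l.splitting-3}.

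In Cases~\ref{l.splitting-1} and~\ref{l.splitting-2} the bundle $f^*\sF_\alpha^+\cong\sO_{\bP^1}(2)\oplus\sO_{\bP^1}(1)^{\oplus(r^+-1)}$ is ample, so Theorem~\ref{t.Araujo-ProjBundle} applies to $\sF_\alpha^+$: after shrinking, the $\cK$-rationally connected quotient $\pi^{\circ}\colon X^{\circ}\to T^{\circ}$ is a $\bP^{d+1}$-bundle with $\sF_\alpha^+|_{X^{\circ}}\subseteq T_{X^{\circ}/T^{\circ}}$ and the curves of $\cK$ lines in the fibres. Since the lines through a general point of a fibre sweep out every tangent direction, the tangency hypothesis forces $(\sF_\alpha^+)_x=T_x(\text{fibre})$ at a general point, hence $\sF_\alpha^+|_{X^{\circ}}=T_{X^{\circ}/T^{\circ}}$, $r^+=d+1$, and the general leaf of $\sF_\alpha^+$ is a fibre $\cong\bP^{r^+}$ with vanishing log boundary. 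In Case~\ref{l.splitting-2} ($r^a=r^+$) this gives $\sF^a=\sF_\alpha^+=T_{X^{\circ}/T^{\circ}}$, so the space of leaves of $\sF^a$ is $T^{\circ}$ and, by Definition~\ref{d.algebraic-part}, $\sF$ is over $X^{\circ}$ the $\pi^{\circ}$-pull-back of a purely transcendental foliation on $T^{\circ}$; then $\sF/\sF^a$ restricts with trivial determinant to each fibre, so $K_{\sF/\sF^a}|_F\equiv 0$ and $D_F=\Delta_F=0$, contradicting $D_F=H$. In Case~\ref{l.splitting-1} ($r^a=r^++1$) the leaves of $\sF^a$ are unions of fibres of $\pi^{\circ}$ (being invariant under $T_{X^{\circ}/T^{\circ}}\subseteq\sF^a$), so $\sF^a=(\pi^{\circ})^{-1}\sG_1$ for an algebraically integrable foliation $\sG_1$ of rank $r^a-r^+=1$ on $T^{\circ}$; hence the closure of a general leaf of $\sF^a$ carries a $\bP^{r^+}$-bundle structure over a curve, so its normalisation has Picard number $\ge 2$ and cannot be $\bP^{r^a}$, again contradicting Corollary~\ref{c.log-leaf-maxSeshadri}.

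I expect the main difficulty to lie in Cases~\ref{l.splitting-1}--\ref{l.splitting-2}: one must pass carefully from the pointwise identity $\sF_\alpha^+|_{X^{\circ}}=T_{X^{\circ}/T^{\circ}}$ to the global statement that $\sF^a$ (and, in Case~\ref{l.splitting-2}, $\sF$ itself) is, over $X^{\circ}$, the $\pi^{\circ}$-pull-back of a foliation on $T^{\circ}$ — identifying the space of leaves of $\sF^a$ with a birational model of the base — and then match the resulting description of a general leaf of $\sF^a$ against the rigid model $(\bP^{r^a},H)$ furnished by Corollary~\ref{c.log-leaf-maxSeshadri}. The computations in Cases~\ref{l.splitting-3} and~\ref{l.splitting-4} are routine bookkeeping with splitting types and with intersection numbers on $\bP^1$ and $\bP^{r^a}$.
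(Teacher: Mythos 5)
Your proposal reaches the right conclusion but by a route that is both substantially longer than the paper's and, in its most delicate step, not yet rigorous. The paper disposes of the claim in a few lines without ever invoking Claim~\ref{c.splitting-types} or Theorem~\ref{t.Araujo-ProjBundle}: if $T_{\bP^1}\subset f^*\sF_{\alpha}^+\subset f^*\sF^a$, then $C$ lies in $\nu(F)$ for a general fibre $F$ of the family of leaves $\pi:U\to T$ of $\sF^a$; since $C$ is a general member of the covering family it can be chosen disjoint from $\nu(\Exc(\nu))$, so $\nu^{-1}(C)\subset F$ misses $\supp(\Delta_F)=\Exc(\nu)\cap F$; but $F\cong\bP^{r^a}$ has Picard number one and the boundary furnished by Corollary~\ref{c.log-leaf-maxSeshadri} is a hyperplane, and no curve in $\bP^{r^a}$ can avoid a hyperplane. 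You actually assemble all the needed ingredients (the inclusion $C\subset F$ and the model $(F,D_F)\cong(\bP^{r^a},H)$), but then look for a case-by-case contradiction instead of noticing that $C$ general forces $\nu^{-1}(C)$ to lie in the complement of the non-isomorphism locus of $\nu$, and hence off the hyperplane.

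Concretely, the genuine gap in your write-up is in Case~\ref{l.splitting-1}. You assert that the normalisation of a general leaf closure of $\sF^a$ ``carries a $\bP^{r^+}$-bundle structure over a curve, so its normalisation has Picard number $\ge 2$'', but the bundle structure is only available on the dense open set $F\cap X^{\circ}$ and the rational map $\bP^{r^a}\dashrightarrow B$ it induces need not be a morphism; a rational fibration over a curve on an open dense set does not by itself raise the Picard number. (The contradiction can be repaired — two distinct general fibres are disjoint closed hypersurfaces in $\bP^{r^a}$, yet any two effective divisors on $\bP^{r^a}$ meet — but that argument is not the one you give.) Case~\ref{l.splitting-2} also leans on several unproven identifications (that $\sF^a$ equals $T_{X^{\circ}/T^{\circ}}$ globally on $X^{\circ}$, that the $\cK$-rationally connected quotient is birational to the space of leaves of $\sF^a$, and that $K_{\sQ}|_F\equiv 0$) which need to be carried out before the conclusion $D_F=\Delta_F=0$ is legitimate; you yourself flag this as the hard part. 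I would recommend replacing the entire case analysis by the paper's direct argument: once you have $C$ inside a general leaf closure with normalisation $\bP^{r^a}$ and boundary a hyperplane, generality of $C$ already yields the contradiction, and the splitting-type dichotomy of Step~1 plays no role in this claim.
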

	
	\begin{proof}[Proof of Claim \ref{c.tangency-minimal-rational-curve}]
		Let $\pi:U\rightarrow T$ be the family of leaves of $\sF^a$ and $\nu:U\rightarrow X$ the evaluation morphism. By \cite[II, Proposition 3.7]{Kollar1996}, we may assume that general minimal rational curves parametrised by $\cK$ are disjoint from $\nu(\Exc(\nu))$.
		
		Assume to the contrary that $T_{\bP^1}$ is contained in $f^*\sF^+_{\alpha}$. Then we have $T_{\bP^1}\subset f^*\sF^a$. In particular, there exists a general fibre $F$ of $\pi$ such that $C$ is contained in $\nu(F)$. Moreover, as $C$ is general, the preimage $\nu^{-1}(C)\subset F$ is disjoint from $\Exc(\nu)\cap F=\supp(\Delta_F)$ (see \cite[Lemma 2.12]{AraujoDruel2019}). Nevertheless, as $F\cong \bP^{r^a}$ has Picard number one and $\Delta_F$ is a hyperplane in $\bP^{r^a}$ by Corollary \ref{c.log-leaf-maxSeshadri}, we get a contradiction.
	\end{proof}

\subsubsection{Step 3. End of proof.}
	
	We end the proof by showing that $\sF$ is induced by a linear projection of $\bP^n$ to $\bP^{n-r^a}$. By Step 1 and Step 2, the splitting type $f^*\sF^+_{\alpha}$ must be of the form $\sO_{\bP^1}(1)^{\oplus r^+}$ and $\sF^a=\sF^+_{\alpha}$. Moreover, by Theorem \ref{t.Araujo-ProjBundle}, after shrinking $T^{\circ}$ if necessary, the $\cK$-rationally connected quotient $\pi^{\circ}:X^{\circ}\rightarrow T^{\circ}$ is a $\bP^{d+1}$-bundle such that $\sF^a|_{X^{\circ}}\subset T_{X^{\circ}/T^{\circ}}$. On the other hand, according to Corollary \ref{c.log-leaf-maxSeshadri}, there is a common point in the closure of a general leaf of $\sF^a$. Hence, we must have $\dim(T)=0$. As a consequence, we obtain $X\cong \bP^n$ and $\sF^a \subset T_{\bP^n}$ is a foliation with rank $r^a$ and $\det(\sF^a)=\sO_{\bP^n}(r^a)$. Moreover, as $\epsilon(\sO_{\bP^n}(1))=1$ and $\epsilon(-K_{\sF})=r^a$, we obtain 
	\[
	\det(\sF^a) \cong \det(\sF) \cong \sO_{\bP^n}(r^a).
	\]
	Hence, by \cite[Theorem 1.5 and Corollary 1.7]{AraujoDruel2019}, the foliation $\sF$ is the linear pull-back of a purely transcendental foliation on $\bP^{n-r^a}$ with zero canonical class. This finishes the proof.

\section{Examples and related questions}

\label{s.Examples-Seshadri}

In this last section, we exhibit some examples concerning the sharpness of our main results and also propose a few interesting related questions. We start with the following simple observation.

\begin{lem}
	\label{l.cone-projective-bundle}
	In Example \ref{e.generalised-index}, if we assume furthermore that $Z$ is smooth and $\Pic(Z)\cong \bZ\sO_Z(1)$, then we have
	\begin{center}
		$\Nef(X)=\langle \Lambda+b_1\pi^*A,\pi^*A\rangle$ and $\Pseff(X)=\langle E,\pi^*A\rangle$.
	\end{center}
\end{lem}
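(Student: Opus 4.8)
The plan is to reduce everything to a two-dimensional computation. Since $Z$ is smooth with $\Pic(Z)\cong\bZ\sO_Z(1)$, the projective bundle formula gives $\Pic(X)=\bZ\Lambda\oplus\bZ\pi^*A$, and for a line $\ell$ in a fibre of $\pi$ one has $\Lambda\cdot\ell=1$, $\pi^*A\cdot\ell=0$, while $\pi^*A$ is positive on a section curve lying over a curve in $Z$; hence $\Lambda$ and $\pi^*A$ are numerically independent and $N^1(X)_\bR\cong\bR^2$. Thus both $\Nef(X)$ and $\Pseff(X)$ are pointed $2$-dimensional closed convex cones with nonempty interior (the latter because $X$ is projective), so each is the convex hull of its two extremal rays, and it suffices to identify those rays. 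The key elementary fact I will use is that two linearly independent nonzero classes lying on the boundary of such a cone must lie on its two distinct extremal rays.

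For the nef cone I would first observe that $\pi^*A$ is nef (pull-back of an ample class) but not ample, since $\pi^*A\cdot\ell=0$; hence $\pi^*A\in\partial\Nef(X)$. Next, tensoring by $\sO_Z(b_1)$ gives $\sE\otimes\sO_Z(b_1)=\sO_Z(m+b_1)\oplus\bigoplus_i\sO_Z(b_1-b_i)$, a direct sum of nef line bundles (all twists are $\geq 0$ because $b_1\geq b_i$ and $m>0$), hence a nef vector bundle; therefore $\sO_{\bP(\sE\otimes\sO_Z(b_1))}(1)=\sO_X(\Lambda+b_1\pi^*A)$ is nef. It is not ample: the quotient $\sE\onto\sO_Z(-b_1)$ onto a summand of $\sQ$ determines a section $\sigma\colon Z\to X$ with $\sigma^*\sO_X(\Lambda)\cong\sO_Z(-b_1)$, so $\sigma^*(\Lambda+b_1\pi^*A)$ is trivial and $(\Lambda+b_1\pi^*A)\cdot\sigma(C_0)=0$ for every curve $C_0\subset Z$; thus $\Lambda+b_1\pi^*A\in\partial\Nef(X)$ as well. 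Since the two classes are linearly independent, they span the two extremal rays, giving $\Nef(X)=\langle\Lambda+b_1\pi^*A,\pi^*A\rangle$.

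For the pseudoeffective cone, $\langle E,\pi^*A\rangle\subseteq\Pseff(X)$ because $E$ is a prime divisor and $\pi^*A$ is nef. For the reverse inclusion it is enough to show that every effective integral divisor class lies in $\langle E,\pi^*A\rangle$, as $\Pseff(X)$ is the closure of the $\bR_{\geq0}$-span of such classes and $\langle E,\pi^*A\rangle$ is closed. Write such a class as $a\Lambda+c\pi^*A$ with $a,c\in\bZ$. By the projection formula $H^0\bigl(X,\sO_X(a\Lambda+c\pi^*A)\bigr)=H^0\bigl(Z,\Sym^a\sE\otimes\sO_Z(c)\bigr)$, which is zero unless $a\geq 0$; and for $a\geq 0$ the decomposition $\Sym^a\sE=\bigoplus\sO_Z(im-\lambda\cdot b)$, the sum running over integers $i\geq0$ and multi-indices $\lambda\in\bZ_{\geq0}^{r'}$ with $i+|\lambda|=a$ (where $|\lambda|=\sum_l\lambda_l$ and $\lambda\cdot b=\sum_l\lambda_l b_l\geq0$), shows that a nonzero section forces $im+c-\lambda\cdot b\geq0$ for some such $(i,\lambda)$. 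Since $a=i+|\lambda|$ and $m>0$, this yields $am+c=(im+c)+|\lambda|m\geq\lambda\cdot b+|\lambda|m\geq0$. Hence $a\Lambda+c\pi^*A=aE+(am+c)\pi^*A\in\langle E,\pi^*A\rangle$, and therefore $\Pseff(X)=\langle E,\pi^*A\rangle$.

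The only points needing genuine care are checking that the two exhibited nef classes are on the boundary rather than in the interior (handled by the fibre line $\ell$ and the section $\sigma$) and the cohomology bound on the effective cone; the essential simplification, without which the argument would be much harder, is that the cyclicity of $\Pic(Z)$ collapses the problem to a planar cone, after which no deeper input is required.
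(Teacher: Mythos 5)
Your proof is correct, and for the pseudoeffective cone it takes a genuinely different and more self-contained route than the paper. The paper dispatches $\Pseff(X)$ by an abstract argument: it takes the unknown extremal ray $\Lambda-\alpha\pi^*A$, uses effectivity of $E$ to get $\alpha\geq m$, observes that then $(\Lambda-\alpha\pi^*A)|_E$ is not pseudoeffective so the class is not modified nef, and invokes \cite[Lemma 2.5]{HoeringLiuShao2020} to produce a prime divisor $D'$ on that ray, which is forced to be $E$. You instead compute directly: $H^0(X,\sO_X(a\Lambda+c\pi^*A))=H^0(Z,\Sym^a\sE\otimes\sO_Z(c))$ vanishes unless $a\geq 0$, and for $a\geq 0$ the explicit splitting of $\Sym^a\sE$ into line-bundle summands forces $am+c\geq 0$ whenever a section exists, placing every effective class in $\langle E,\pi^*A\rangle$; taking closures finishes it. Your computation is elementary and avoids citing the modified-nef machinery, at the cost of a somewhat longer (but transparent) calculation; the paper's approach is shorter but imports a nontrivial external lemma. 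You also actually prove the nef-cone statement (via nefness of $\sE\otimes\sO_Z(b_1)$ and the section $\sigma$ given by the quotient $\sE\onto\sO_Z(-b_1)$), whereas the paper declares it obvious. Both arguments are sound.
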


\begin{proof}
	The description of the nef cone of $X$ is obvious and we only need to deal with the pseudoeffective cone of $X$. Let $\alpha$ be the unique real number such that $\Lambda-\alpha\pi^*A$ generates an extremal ray of $\Pseff(X)$. As $E\sim \Lambda - m\pi^*A$ is effective, we must have $\alpha\geq m$. In particular, the restriction $(\Lambda-\alpha \pi^*A)|_E$ is not pseudoeffective. As a consequence, the divisor $\Lambda-\alpha \pi^*A$ is not modified nef and hence there exists a unique prime effective divisor $D'$ such that $[D']\in \bR_{> 0}(\Lambda-\alpha\pi^*A)$ by \cite[Lemma 2.5]{HoeringLiuShao2022}. In particular, as $D'|_E$ is not pseudoeffective, we obtain $D'=E$ and $\alpha=m$.
\end{proof}

\begin{example}
	\label{e.folia-arb-index}
	Let $(r,n)$ be a pair of positive integers such that $r<n$. Then for any integer $d\geq -r$, there exists a foliation $\sF$ on $\bP^n$ with algebraic rank $r^a=r$ and such that  $\sO_{\bP^n}(K_{\sF})\cong \sO_{\bP^{n}}(d)$.
	\begin{enumerate}
		\item If $r\leq n-2$, we may take $\pi:\bP^n\dashrightarrow \bP^{n-r}$ to be a linear projective and then take $\sG\subsetneq T_{\bP^{n-r}}$ to be a purely transcendental rank one foliation on $\bP^{n-r}$ such that 
		\[
		\sG\cong \sO_{\bP^{n-r}}(-d-r)\quad \text{and} \quad d+r\geq 0.
		\]
		Then the pull-back $\sF=\pi^{-1}\sG$ is a foliation with algebraic rank $r^a=r$ and such that $\det(\sF)\cong \sO_{\bP^n}(-d)$.
		
		\item If $r=n-1$, we may take $\pi:\bP^n\dashrightarrow \bP^1$ to be a rational map defined by two general coprime homogeneous polynomials $f$ with degree $d_f$ and $g$ with degree $d_g$ such that $d=d_f+d_g-n-1$. Then the foliation $\sF$ induced by $\pi$ has algebraic rank $r^a=n-1$ and $\det(\sF)\cong \sO_{\bP^n}(-d)$.
	\end{enumerate}
\end{example}

\subsection{Generalised index and Seshadri constant}

In this subsection, we will discuss the sharpness of Theorem \ref{t.Kobayashi-Ochiai-I} and Theorem \ref{t.bounding-Seshadri}. Firstly let us recall the following question asked by Araujo and Druel in \cite{AraujoDruel2019} on the generalised indices of foliations.

\begin{question}[\protect{\cite[Question 4.4]{AraujoDruel2019}}]
	\label{q.AD-question}
	Is there a foliation $\sF$ on a projective manifold $X$ with $\widehat{\iota}(\sF)\not\in \bN$ and $\widehat{\iota}(\sF)<r^a<\widehat{\iota}(\sF)+1$?
\end{question}

We will give a positive answer to this question by constructing foliations on the projective bundles $X$ given in Example \ref{e.generalised-index}. We start with the following general result.

\begin{lem}
	\label{l.generalised-index-general-formula}
	In Example \ref{e.generalised-index}, we assume furthermore that $Z$ is smooth and $\Pic(Z)\cong \bZ\sO_Z(1)$. Let $D\equiv \beta\Lambda + \gamma\pi^*A$ be a $\bR$-Cartier $\bR$-divisor on $X$. If $D$ is big but not ample, i.e., $-m\beta<\gamma \leq b_1\beta$, then we have
	\[
	\widehat{\iota}(D) =  \frac{\beta m + \gamma}{m+b_1+1}.
	\]
\end{lem}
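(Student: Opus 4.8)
The plan is to reduce the computation of $\widehat{\iota}(D)$ to an elementary optimisation on $\Pic(X)\otimes\bR\cong\bR\Lambda\oplus\bR\pi^*A$, using the descriptions of $\Nef(X)$ and $\Pseff(X)$ furnished by Lemma \ref{l.cone-projective-bundle}, while keeping careful track of the integrality constraint coming from the requirement in Definition \ref{d.generalised-index} that the divisor $H$ witnessing $\widehat{\iota}$ be an \emph{ample Cartier} divisor. First I would record that, since $\Pic(Z)\cong\bZ\sO_Z(1)$, one has $\Pic(X)=\bZ\Lambda\oplus\bZ\pi^*A$; hence every ample Cartier divisor on $X$ is of the form $H=a\Lambda+c\pi^*A$ with $a,c\in\bZ$, and $H$ is ample precisely when it lies in the interior of $\langle\Lambda+b_1\pi^*A,\pi^*A\rangle$, that is, when $a\geq 1$ and $c\geq ab_1+1$. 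Likewise, writing a numerical class as $x\Lambda+y\pi^*A$, Lemma \ref{l.cone-projective-bundle} and the relation $E\sim\Lambda-m\pi^*A$ say that the class is pseudoeffective if and only if $x\geq 0$ and $y+mx\geq 0$. I would also note that the hypothesis $-m\beta<\gamma\leq b_1\beta$ forces $\beta>0$, so $D$ is big and $\widehat{\iota}(D)$ is defined.

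Next, for a fixed ample Cartier divisor $H=a\Lambda+c\pi^*A$ as above and a real number $t>0$, I would write $D-tH=(\beta-ta)\Lambda+(\gamma-tc)\pi^*A$ and impose pseudoeffectivity via the two inequalities just described. The ``$y$-inequality'' rearranges to $t(c+am)\leq\gamma+m\beta$; combining it with $c\geq ab_1+1$ yields
\[
t\leq\frac{\gamma+m\beta}{a(m+b_1)+1},
\]
and since $m+b_1\geq 1$ (as $m\geq 1$) the right-hand side is largest exactly for $a=1$, giving $t\leq\dfrac{\beta m+\gamma}{m+b_1+1}$; the ``$x$-inequality'' $ta\leq\beta$ is then not the binding constraint, because $\gamma\leq b_1\beta$ and $\beta>0$ force $\dfrac{\beta m+\gamma}{m+b_1+1}<\beta$. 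For the reverse inequality I would take $H=\Lambda+(b_1+1)\pi^*A$, which is ample, and $t_0=\dfrac{\beta m+\gamma}{m+b_1+1}$, and check by direct substitution that $D-t_0H$ equals the nonnegative multiple $(\beta-t_0)E$ of the effective divisor $E$, hence is pseudoeffective. Thus the bound is attained and $\widehat{\iota}(D)=t_0$, as claimed.

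The one point that genuinely requires care --- and the source of the ``$+1$'' in the denominator --- is the integrality of $H$: one is \emph{not} allowed to place $H$ on the boundary ray $\bR_{>0}(\Lambda+b_1\pi^*A)$ of the nef cone, which would produce the strictly larger value $(\beta m+\gamma)/(m+b_1)$, and one cannot recover that value by rescaling, since the integrality gap ``$+1$'' in the condition $c\geq ab_1+1$ does not scale with $a$, so enlarging $a$ only worsens the estimate. Making this monotonicity explicit, and verifying that the auxiliary constraint $ta\leq\beta$ never becomes binding on the relevant range, is the main thing to pin down; everything else is routine two-variable arithmetic with the cone descriptions of Lemma \ref{l.cone-projective-bundle}.
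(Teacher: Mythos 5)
Your proof is correct and takes essentially the same route as the paper's: both expand everything in the basis $\Lambda,\pi^*A$, use the descriptions of $\Nef(X)$ and $\Pseff(X)$ from Lemma~\ref{l.cone-projective-bundle}, and pin down the integrality gap $c\geq ab_1+1$ for an ample Cartier divisor $H=a\Lambda+c\pi^*A$, which is the source of the ``$+1$'' in the denominator. The only organisational difference is that the paper starts from the attained supremum $-K_\sF\equiv\widehat{\iota}(D)H+P$ (via \cite[Lemma~4.1]{AraujoDruel2019}), shows $P$ must be a nonnegative multiple of $E$, and extracts the formula from the resulting system, whereas you bound the supremum from above directly over all admissible $(H,t)$ and then exhibit the explicit witness $H=\Lambda+(b_1+1)\pi^*A$ with $D-t_0H=(\beta-t_0)E$; this makes your version slightly more self-contained since it never relies on the supremum being attained.
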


\begin{proof}
	Write $D\equiv \widehat{\iota}(D) H + P$ for an ample Cartier divisor $H$ and a pseudoeffective $\bR$-Cartier $\bR$-divisor $P$. Let $a$ and $e$ be the unique non-negative real numbers such that $P\equiv eE+a\pi^*A$. If $e=0$, then $P\equiv a\pi^*A$ is nef. In particular, the divisor $D$ is ample, which is impossible by our assumption. Thus we must have $e>0$. On the other hand, if $a>0$, then $P$ is big and consequently there exists a positive real number $t$ such that $P-tH$ is pseudoeffective. In particular, we get
    \[
    D \equiv \widehat{\iota}(D) H + P \equiv (\widehat{\iota}(D)+t)H + (P-tH),
    \]
    which is absurd. Hence, we have $a=0$ and $P\equiv eE$ with $e>0$.
    
    Since $H$ is an ample Cartier divisor, by the description of $\Nef(X)$, there exists two positive integers $d$ and $c>b_1 d$ such that $H\equiv d\Lambda + c\pi^*A$. By definition, we have
    \begin{align*}
    	D - P \equiv D - eE 
    	    & \equiv (\beta-e)\Lambda + (em + \gamma)\pi^*A \\
    	    & \equiv \widehat{\iota}(D)(d\Lambda + c\pi^*A).
    \end{align*}
    Since $\Lambda$ and $\pi^*A$ are linearly independent, this implies
    \begin{equation}
    	\label{e.iota}
    	\widehat{\iota}(D) = \frac{\beta-e}{d}
    \end{equation}
    and
    \begin{equation}
    	\label{e.iota-lower-bound}
    	em + \gamma = \widehat{\iota}(D)c \geq \widehat{\iota}(D)(b_1 d+1).
    \end{equation}
    Then combining \eqref{e.iota} and \eqref{e.iota-lower-bound} yields
    \begin{equation}
    	(md + b_1 d +1)e  \geq \beta(b_1 d+1) - d\gamma.
    \end{equation}
    In particular, we obtain
    \begin{equation}
    	\label{e.upper-bound-P}
    	e\geq \frac{\beta(d b_1 + 1) -d\gamma}{md + b_1 d + 1}.
    \end{equation}
    Then combining \eqref{e.upper-bound-P} with \eqref{e.iota} gives
    \begin{equation}
    	\widehat{\iota}(D) \leq \frac{\beta(md+b_1 d+1)-\beta(b_1 d+1)+d\gamma}{d(md + b_1 d+1)} \leq  \frac{\beta m +\gamma}{m+b_1+1}
    \end{equation}
    and the equality holds if and only if $d=1$ and $c=b_1+1$.
\end{proof}

\begin{prop}
	\label{p.examples-generalised-index}
	Let $(r,n)$ be a pair of positive integers such that $r<n$ and $n\geq 3$. Then for any positive rational number $c\leq r$, there exists an $n$-dimensional projective manifold $X$ and a foliation $\sF\subsetneq T_X$ with algebraic rank $r^a=r$ such that $-K_{\sF}$ is big and
	\[
	\widehat{\iota}(\sF)=\epsilon(\widehat{\iota}(\sF)H)=c,
	\]
	where $H$ is an ample Cartier divisor $H$ such that $P\coloneqq -K_{\sF}-\widehat{\iota}(\sF)H$ is pseudoeffective.
\end{prop}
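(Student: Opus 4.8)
The plan is to realise the prescribed value $c$ by one of two explicit families of examples, according to whether $c$ is an integer, reading off the generalised index either from $\Pic\cong\bZ$ or from Lemma~\ref{l.generalised-index-general-formula}, and obtaining the Seshadri identity by a direct inspection of the distinguished ample divisor $H$.

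\emph{Integer values $c\in\{1,\dots,r\}$.} First I would take $X=\bP^n$ and for $\sF$ one of the foliations of Example~\ref{e.folia-arb-index}: if $r\le n-2$, the linear pull-back of a purely transcendental rank-one foliation $\sG$ on $\bP^{n-r}$ with $\sO_{\bP^{n-r}}(K_{\sG})\cong\sO_{\bP^{n-r}}(r-c)$; if $r=n-1$, the foliation induced by a general pencil $[f:g]$ with $\deg f+\deg g=n+1-c$. In both cases $r^a=r$ and $-K_{\sF}\sim_{\bQ}\sO_{\bP^n}(c)$ is big, so $\widehat{\iota}(\sF)=c$ with $H=\sO_{\bP^n}(1)$ and $P=0$. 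Since a line through a general point realises the infimum defining $\epsilon(\sO_{\bP^n}(1))$ and the multiplicity of an irreducible space curve is bounded by its degree, $\epsilon(\sO_{\bP^n}(1))=1$, hence $\epsilon(\widehat{\iota}(\sF)H)=c$.

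\emph{Non-integer $c$ with $r\le n-2$.} Here I would use the projective bundle $X=\bP(\sE)$ of Example~\ref{e.generalised-index} over $Z=\bP^{n-r}$ with $\sE=\sO_Z(m)\oplus\sO_Z^{\oplus r}$ (so $b_1=\dots=b_r=0$) and take $\sF=\pi^{-1}\sG$, where $\sG$ is a purely transcendental rank-one foliation on $Z$ with $\sO_Z(K_{\sG})\cong\sO_Z(s)$, $s\ge 0$ (such $\sG$ exist because $n-r\ge 2$, as in Example~\ref{e.folia-arb-index}). Then the fibres $\bP^r$ of $\pi$ are the leaves of $\sF^a$, so $r^a=r$; from $\det(\sF)\cong\det(T_{X/Z})\otimes\pi^{*}\sG$ and the relative canonical bundle formula one computes $-K_{\sF}\equiv(r+1)\Lambda-(m+s)\pi^{*}A$, which is big but not ample, and Lemma~\ref{l.generalised-index-general-formula} gives $\widehat{\iota}(\sF)=\tfrac{rm-s}{m+1}$. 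Writing $c=p/q$ in lowest terms (so $q\ge 2$ and $p<rq$), the choice $m=kq-1$ and $s=k(rq-p)-r$ for any integer $k\ge r$ makes $\widehat{\iota}(\sF)=c$, keeps $s\ge 0$ and $-K_{\sF}$ big, and by the proof of Lemma~\ref{l.generalised-index-general-formula} the supremum is attained with $H\equiv\Lambda+\pi^{*}A$.

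\emph{The Seshadri constant, and the case $r=n-1$.} It remains to prove $\epsilon(H)=1$, since $\epsilon(\widehat{\iota}(\sF)H)=\widehat{\iota}(\sF)\,\epsilon(H)$. A line inside a fibre of $\pi$ has $H$-degree $1$ and multiplicity $1$ at each of its points, and a general point lies on such a line, so $\epsilon(H)\le 1$. For the reverse inequality, rewrite $H=\sO_{\bP(\sE')}(1)$ with $\sE'=\sE\otimes\sO_Z(1)=\sO_Z(m+1)\oplus\sO_Z(1)^{\oplus r}$, a direct sum of very ample line bundles on $\bP^{n-r}$; hence $H$ is very ample and embeds $X$ into some $\bP^N$ so that every fibre of $\pi$ becomes a linear $\bP^r$. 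Then for any point $x\in X$ and any irreducible curve $C\ni x$ we have $H\cdot C=\deg_{\bP^N}C\ge\mult_x C$, so $\epsilon(H)\ge 1$ and $\epsilon(H)=1$. Finally, for $r=n-1$ the base $\bP^{n-r}=\bP^1$ carries no purely transcendental foliation, so the non-integer values of $c$ have to be realised instead by the codimension-one algebraically integrable foliations of Example~\ref{e.codimension-foliation} (pencils of divisors on suitable projective bundles over a curve), handled by the same computation. I expect the verification that $\epsilon(H)=1$, together with this last case, to be the only genuinely delicate points: choosing the numerical parameters $(m,b_1,\dots,b_r,s)$ to realise an arbitrary rational $c$ is routine once one is allowed to rescale $(p,q)$ to $(kp,kq)$.
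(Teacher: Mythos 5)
Your construction for $r\le n-2$ is correct and in fact streamlines the paper's argument: you take $\sE=\sO_Z(m)\oplus\sO_Z^{\oplus r}$ over $Z=\bP^{n-r}$ and realise every non-integer $c\in(0,r)$ in one uniform family via $\sF=\pi^{-1}\sG$, instead of the paper's two-case split (algebraically integrable foliation by fibres of $\pi$ with $b_i>0$ when $1<c<r$, versus a pull-back from $\bP^{n-1}$ with a rank-$2$ bundle when $0<c<1$). Your numerics check out: with $m=kq-1$, $s=k(rq-p)-r$ and $k\ge r$ one has $s\ge 0$, $s<rm$, so $-K_{\sF}$ is big but not ample and Lemma~\ref{l.generalised-index-general-formula} gives $\widehat\iota(\sF)=p/q$; and $H=\Lambda+\pi^*A$ is the tautological divisor of $\sE\otimes\sO_Z(1)$, which is very ample, so $\epsilon(H)=1$ by the line-in-the-fibre bound (essentially the same argument as in the paper). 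You also quietly correct a sign slip in Example~\ref{e.folia-arb-index}: a purely transcendental rank-one $\sG$ on $\bP^{n-r}$ must have $K_{\sG}$ pseudoeffective, so $\sG\cong\sO(e)$ with $e\le 0$; your $\sO_Z(K_{\sG})\cong\sO_Z(s)$, $s\ge 0$, is the consistent normalisation.

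The genuine gap is the case $r=n-1$ with $c\notin\bZ$. Your uniform construction cannot apply there, as you note, since $\bP^1$ carries no foliation at all, and moreover a rank-$(r+1)=n$ saturated subsheaf of $T_X$ would be $T_X$ itself, violating $\sF\subsetneq T_X$. But your proposed fallback — Example~\ref{e.codimension-foliation} — does not repair this: those examples live on weighted projective spaces $\bP(1,a_1,\dots,a_n)$, which are singular unless all $a_i=1$, while the statement requires $X$ to be a projective \emph{manifold}. The paper avoids this entirely: for $1<c<r$ it takes $\sF$ to be the algebraically integrable foliation given by the fibres of $\pi:\bP(\sE)\to\bP^{n-r}$ (no $\sG$ on the base needed, so $\bP^1$ is allowed), adjusting the weights $b_1,\dots,b_r$ to hit the target index; and for $0<c<1$ it works over $Z=\bP^{n-1}$ with a rank-$2$ bundle and a foliation $\sG$ on $\bP^{n-1}$ of algebraic rank $r-1$. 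You would need to incorporate one of these two devices (or an equivalent smooth construction) to cover $r=n-1$; as written, that case is not established.
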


\begin{proof}
	 By Example \ref{e.folia-arb-index}, we may assume that $c\not\in \bZ$. In particular, there exist two positive integers $p$ and $q$ such that $c=p/q<r$.
	 \\[10pt]
	 \textbf{Case 1.} \textit{$r>c>1$, i.e. $p>q$.} Let $l$ be a large enough positive integer such that
	 \begin{equation}
	 	\label{e.choice-l}
	 	l(p-q)+(p-qr)+1>0\quad \text{and}\quad \frac{p}{q}\leq \left(1-\frac{1}{ql}\right)r + \frac{1}{ql}.
	 \end{equation}
	 Note that this is possible because $q<p<qr$. Let $b_1=lq-1$ and let $b_i$ $(2\leq i\leq r)$ be some non-negative integers such that
	 \begin{equation}
	 	b-b_1=\sum_{i=2}^r b_i= l(p-q) + (p-qr) + 1.
	 \end{equation}
     The existence of $b_i$'s follows from the fact that by \eqref{e.choice-l} we have $b-b_1>0$ and 
     \[
     b-b_1 = l(p-q) + (p-qr) + 1 \leq l(p-q) \leq (lq-1)(r-1) = b_1(r-1).
     \]
     Let $(Z,\sO_Z(1))$ be the pair $(\bP^{n-r},\sO_{\bP^{n-r}}(1))$ and let $\pi:X\rightarrow Z$ be the projective bundle constructed in Example \ref{e.generalised-index} with $m=q$, $r'=r$ and $b_i$'s chosen as above. Let $\sF$ be the algebraically integrable foliation induced by $\pi$. Then we have
     \[
     -K_{\sF} = -K_{X/Z} \sim (r+1)\Lambda - (q-b)\pi^*A.
     \]
     In particular, $-K_{\sF}$ is big but not ample,  and by Lemma \ref{l.generalised-index-general-formula} we have
     \begin{equation}
     	\widehat{\iota}(\sF) = \widehat{\iota}(-K_{\sF}) = \frac{(r+1)q + b - q}{q+lq} = \frac{p}{q}.
     \end{equation}
	 \textbf{Case 2.} \textit{$0<c<1$, i.e. $p<q$.} Let $(Z,\sO_Z(1))$ be the pair $(\bP^{n-1},\sO_{\bP^{n-1}}(1))$ and let $X$ be the $n$-dimensional projective manifold constructed in Example \ref{e.generalised-index} with $m$ and $b=b_1$ to be determined. Let $\sG$ be a foliation on $Z$ with algebraic rank $r-1$ and $\sO_Z(K_{\sG})\cong \sO_Z(d)$ with $d\geq -r+1$ to be determined. Denote by $\sF$ the pull-back $\pi^{-1}\sG$. Then by \cite[2.6]{AraujoDruel2019}, we have
	 \begin{equation}
	 	- K_{\sF} = - K_{X/Z} - \pi^*K_{\sG} = 2\Lambda + (b-m-d)\pi^*A.
	 \end{equation}
     Note that $-K_{\sF}$ is big but not ample if and only if $-2m<b-m-d\leq 2b$ by Lemma \ref{l.cone-projective-bundle}. Moreover, if so, then Lemma \ref{l.generalised-index-general-formula} implies
     \begin{equation}
     	\label{e.index-c-less-1}
     	\widehat{\iota}(\sF) = \frac{2m + b-m-d}{m+b+1} =\frac{m+b-d}{m+b+1}. 
     \end{equation}
     Set $m=q$, $b=q-1$ and let $d=2(q-p)-1\geq 1\geq 1-r$. Then the existence of  $(\bP^{n-1},\sG)$ follows from Example \ref{e.folia-arb-index}. Moreover, it is easy to see that $-2m<b-m-d\leq 2b$ and therefore by \eqref{e.index-c-less-1} a straightforward computation  shows that
     \begin{equation}
     	\widehat{\iota}(\sF) = \frac{q+q-1-(2(q-p)-1)}{q+q-1+1} =\frac{p}{q}.
     \end{equation}
     \\[10pt]
     Finally note that the line bundle $\sO_{\bP(\sE)}(1)\otimes \pi^*\sO_{Z}(b_1+1)$ is very ample in both cases and consequently we have $\epsilon(H)\geq 1$, where $H=\Lambda + (b_1+1)\pi^*A$. On the other hand, since the restriction of $H$ to fibres of $\pi$ is a hyperplane section, we obtain $\epsilon(H)=1$. Moreover, by the proof of Lemma \ref{l.generalised-index-general-formula}, in both cases we have $-K_{\sF} \equiv \widehat{\iota}(\sF)H+eE$ for some positive rational number $e>0$. This finishes the proof.
\end{proof}

\begin{example}
	\label{e.curve-foliations}
	Let $a\geq 2$ be a positive integer and let $\pi:X\rightarrow \bP^1$ be the Hirzebruch surface $\bP(\sO_{\bP^1}(a-1)\oplus \sO_{\bP^1})$. Denote by $\sF$ the foliation induced by $\pi$. Then we have 
	\[
	-K_{\sF} \sim 2\Lambda - (a-1)\pi^*A,
	\]
	where $A$ is a point on $\bP^1$. Then $-K_{\sF}$ is big but not ample by Lemma \ref{l.cone-projective-bundle} and by Lemma \ref{l.generalised-index-general-formula} we have
	\[
	\widehat{\iota}(-K_{\sF}) = \epsilon(\widehat{\iota}(\sF) H) = \frac{2(a-1)- (a-1)}{a} = 1 -\frac{1}{a},
	\]
	where $H=\Lambda + \pi^*A$ is an ample Cartier divisor such that $-K_{\sF}-\widehat{\iota}(\sF) H$ is pseudoeffective. As a consequence, combining this example with Proposition \ref{p.examples-generalised-index} gives a positive answer to Question \ref{q.AD-question}.
\end{example}

\begin{question}
	Given any positive rational number $c<1$, does there exist a smooth projective surface $X$ and a foliation $\sF$ on $X$ such that $-K_{\sF}$ is big and $\widehat{\iota}(\sF)=c$? By Example \ref{e.curve-foliations}, the answer is positive if $c$ is contained in the \emph{standard multiplicities} 
	\[
	\Phi\coloneqq \left\{1-\frac{1}{a}\,|\, a\in\bZ_{>1}\right\}.
	\]
\end{question}

\subsection{Fano index and Seshadri constant}

In this subsection, we will discuss the sharpness of Theorem \ref{t.Kobayashi-Ochiai-II}, Theorem \ref{c.upper-bound-Seshadri-weak-Fano} and Theorem \ref{t.Seshadri-MaxValue}. Let $\sF\subsetneq T_X$ be a Fano foliation on a normal projective variety $X$. By definition, it is clear that we have $\iota(\sF)\leq \widehat{\iota}(\sF)$. The following example shows that this inequality may be strict in general case.

\begin{example}
	Let $r\geq 2$ be a fixed positive integer. In Example \ref{e.generalised-index}, let $r'=r$, $m=1$ and $b_i=r-2$ for any $1\leq i\leq r'$. Moreover, by Example \ref{e.folia-arb-index}, we may choose a foliation $\sG$ on $\bP^{n-r}$ such that $\det(\sG)\cong \sO_{\bP^n}(r)$ and $2r<n-1$. Denote by $\sF$ the pull-back $\pi^{-1}\sG$. Then we have
	\begin{align*}
		K_{\sF} 
		    & \sim -(r+1)\Lambda - \pi^*(r(r-2)-1)A + \pi^*K_{\sG} \\
		    &  \sim -(r+1)\Lambda - ((r+1)(r-2)+1)\pi^*A.
	\end{align*}
	In particular, by Lemma \ref{l.cone-projective-bundle}, $-K_{\sF}$ is ample. On the other hand, note that we have
	\[
	-K_{\sF} - E \sim r\Lambda + ((r+1)(r-2) + 2)) \pi^*A = r (\Lambda + (r-1)\pi^*A)
	\]
	and therefore $\widehat{\iota}(\sF)\geq r$. On the other hand, let $H=d\Lambda + c\pi^*A$ be an ample Cartier divisor such that $d$ and $c$ are two positive integers satisfying 
	\begin{center}
		$c\geq b_1d+1=d(r-2)+1$ and $-K_{\sF} \sim_{\bQ} \iota(\sF) H$.
	\end{center}
    Then we obtain
    \begin{center}
    	$\iota(\sF)d=r+1$ and $\iota(\sF)c=(r+1)(r-2)+1$.
    \end{center}
    This yields
    \[
    (r+1)(r-2)+1 = \iota(\sF) c \geq \iota(\sF)(d(r-2)+1) \geq (r+1)(r-2) + \iota(\sF).
    \]
    Thus we obtain $\iota(\sF)\leq 1$ and hence $\iota(\sF)=1< r\leq \widehat{\iota}(\sF)$.
\end{example}

\begin{lem}
	\label{l.folia-generalised-cone}
	In Example \ref{e.generalised-cone}, we assume furthermore that $Z$ is smooth and $\sO_Z(1)$ is globally generated. Let $\sH$ be a foliation on $Z$ with $\sO_Z(K_{\sH})\cong \sO_Z(d)$ and let $\sF$ be the foliation  induced by $\sH$ on the normal generalised cone $Y$ over $(Z,\sO_Z(m))$ with vertex $\bP^{r'}$. If $d<mr'$, then $\sF$ is a Fano foliation on $Y$ such that
	\[
	\epsilon(-K_{\sF}) = \iota(\sF) =\widehat{\iota}(\sF) = r' - \frac{d}{m}.
	\]
\end{lem}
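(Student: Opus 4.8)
The plan is to resolve the cone $Y$ by the projective bundle it is constructed from, to compute $-K_{\sF}$ upstairs, and then to push everything down; since the resulting $Y$ has Picard number one, all three invariants can then be read off from a single intersection number. Concretely, following Example~\ref{e.generalised-index} with every $b_i=0$, I would take $\pi\colon X=\bP(\sE)\to Z$ with $\sE=\sO_Z(m)\oplus\sO_Z^{\oplus r'}$, write $\Lambda$ for the tautological divisor and $A$ for a Weil divisor with $\sO_Z(A)\cong\sO_Z(1)$, and let $\mu\colon X\to Y$ be the contraction of $E=\bP(\sO_Z^{\oplus r'})$ from Example~\ref{e.generalised-cone}, so that $E\sim\Lambda-m\pi^*A$ is the only $\mu$-exceptional prime divisor and $\mu$ is an isomorphism over the complement of the vertex. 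Because the natural projection $h\colon Y\dashrightarrow Z$ agrees with $\pi$ after composing with $\mu$ on $X\setminus E$, the two foliations $\mu^{-1}\sF$ and $\pi^{-1}\sH$ on $X$ coincide.

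Next I would compute $-K_{\pi^{-1}\sH}$. By the formula for the canonical class of a pulled-back foliation (cf.\ \cite[2.6]{AraujoDruel2019}) together with the relative anticanonical formula $-K_{X/Z}\sim(r'+1)\Lambda-m\pi^*A$ on $\bP(\sE)$,
\[
-K_{\pi^{-1}\sH}\;=\;-K_{X/Z}-\pi^*K_{\sH}\;\sim_{\bQ}\;(r'+1)\Lambda-(m+d)\pi^*A.
\]
Substituting $\pi^*A\sim_{\bQ}\tfrac1m(\Lambda-E)$ and applying $\mu_*$, which kills the exceptional divisor $E$, I obtain
\[
-K_{\sF}\;=\;\mu_*\bigl(-K_{\pi^{-1}\sH}\bigr)\;\sim_{\bQ}\;\Bigl(r'-\tfrac dm\Bigr)L,
\]
where $L\coloneqq\mu_*\Lambda$ is the ample Cartier Weil divisor on $Y$ with $\mu^*L=\Lambda$. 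In particular $-K_{\sF}$ is ample because $d<mr'$, so $\sF$ is a Fano foliation and $\iota(\sF)\ge r'-\tfrac dm$.

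For the reverse estimates I would use the lines $\ell$ lying in the $\bP^{r'}$-shaped rulings $\mu(\pi^{-1}(z))$ of the cone, i.e.\ the images of the lines in the fibres of $\pi$. These move in a family covering $Y$, so $[\ell]$ is a movable class, they are smooth, and $L\cdot\ell=1$, hence $-K_{\sF}\cdot\ell=r'-\tfrac dm$. Thus for any decomposition $-K_{\sF}\equiv tH+P$ with $H$ an ample Cartier Weil divisor and $P$ pseudoeffective one gets $r'-\tfrac dm=t(H\cdot\ell)+P\cdot\ell\ge t$, because $H\cdot\ell\ge1$ and $P\cdot\ell\ge0$; combined with $\iota(\sF)\le\widehat\iota(\sF)$ this forces $\iota(\sF)=\widehat\iota(\sF)=r'-\tfrac dm$. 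Evaluating at a general smooth point $x\in\ell$ also gives $\epsilon(-K_{\sF})\le r'-\tfrac dm$, while $\epsilon(-K_{\sF})=(r'-\tfrac dm)\,\epsilon(L)$ by homogeneity of the Seshadri constant and $\epsilon(L)=1$: the bound $\epsilon(L)\le1$ comes again from these lines, and $\epsilon(L)\ge1$ follows from the pull-back property of Seshadri constants (Lemma~\ref{l.properties-Seshadri}), which reduces it to $\epsilon(\Lambda,\mu^{-1}(x))$ for general $x$, together with the fact that $\Lambda=\sO_{\bP(\sE)}(1)$ is very ample on $X\setminus E$.

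The step I expect to require the most care is the descent in the second paragraph: one must verify that $\Lambda$ really descends to an honest ample \emph{Cartier} Weil divisor $L$ on the singular cone $Y$ with $\mu^*L=\Lambda$ (this is what makes the lower bound $\iota(\sF)\ge r'-\tfrac dm$ legitimate), and that no discrepancy term is hidden in the identity $\mu_*(-K_{\pi^{-1}\sH})=-K_{\sF}$. This is where the hypotheses of Example~\ref{e.generalised-cone} ($Z$ smooth, $\sO_Z(1)$ globally generated) and the $\mu$-ampleness of $-E|_E$ enter. Everything else reduces to elementary intersection theory on $\bP(\sE)$.
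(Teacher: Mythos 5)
Your proof is essentially the same as the paper's: you compute $-K_{\pi^{-1}\sH}\sim_{\bQ}(r'+1)\Lambda-(m+d)\pi^*A$ from the formula for pulled-back foliations, push forward via $\mu$ to get $-K_{\sF}\sim_{\bQ}(r'-d/m)L$ (the paper writes $H=\mu_*\Lambda$ and uses $m\mu_*\pi^*A\sim H$ rather than your substitution $\pi^*A\sim_{\bQ}\tfrac1m(\Lambda-E)$, but arrives at the identical expression), and then establish all three equalities via the covering family of rational curves of $L$-degree one. The one point to flag is your justification of $\epsilon(L)\ge1$: you invoke that $\Lambda$ is ``very ample on $X\setminus E$,'' which is more than the hypotheses directly grant ($\sO_Z(1)$ is only assumed globally generated, not very ample); the paper's proof instead appeals to base-point freeness of $|\Lambda|$, which is what follows immediately from $\sE$ being globally generated --- the conclusion $\epsilon(H)=\epsilon(\Lambda)=1$ is the same, but you should phrase the lower bound via base-point freeness rather than very ampleness.
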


\begin{proof}
	We follow the notations in Example \ref{e.generalised-index} and Example \ref{e.generalised-cone}. Let $H$ be the ample Cartier divisor on $Y$ such that $\mu^*H=\Lambda$. As $Y$ is covered by rational curves of $H$-degree one and $|\Lambda|$ is base-point free, we have $\epsilon(H)=\epsilon(\Lambda)=1$. Let $\sG$ be the pull-back foliation $\pi^{-1}\sH$ on $X$. By \cite[2.6]{AraujoDruel2019}, we have
	\begin{equation}
		K_{\sG} = K_{X/Z} + \pi^*K_{\sH} \sim -(r'+1)\Lambda + (m+d)\pi^*A.
	\end{equation}
    As $m\mu_*\pi^*A \sim H$ and $\mu_*\Lambda = H$, one gets
    \begin{equation}
    	-K_{\sF} = -\mu_*K_{\sG} \sim_{\bQ} \left(r'+1-\frac{m+d}{m}\right) H = \left(r'-\frac{d}{m}\right)H.
    \end{equation}
    Hence, the $\bQ$-Cartier divisor $-K_{\sF}$ is ample if and only if $d<mr'$. Moreover, if so, then we have $\iota(\sF)=\epsilon(-K_{\sF})= \widehat{\iota}(\sF)=r'-d/m$ since $Y$ is covered by rational curves with $H$-degree one. 
\end{proof}

\begin{prop}
	\label{p.Sharpness-Fano-Folia}
	Let $(r,n)$ be a pair of positive integers such that $r<n$. For any rational number $c\leq \min\{r,n-2\}$, there exists an $n$-dimensional $\bQ$-factorial normal projective variety $X$ with klt singularities and a Fano foliation $\sF\subsetneq T_X$ with algebraic rank $r^a=r$ such that 
		\[
		\iota(\sF)= \widehat{\iota}(\sF) = \epsilon(-K_{\sF})=c.
		\]	
\end{prop}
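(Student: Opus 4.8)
The plan is to realise all the required examples as normal generalised cones and then read off the invariants from Lemma~\ref{l.folia-generalised-cone}. First suppose $c$ is a positive integer. Since $c\le r<n$, Example~\ref{e.folia-arb-index} applied to the pair $(r,n)$ with $d=-c$ (so $d\ge -r$) produces a foliation $\sF\subsetneq T_{\bP^n}$ with $r^a=r$ and $\sO_{\bP^n}(K_\sF)\cong\sO_{\bP^n}(-c)$; then $-K_\sF\sim cH$ for the hyperplane class $H$, which is ample, and since $H$ generates $\Pic(\bP^n)$ and $\epsilon(\sO_{\bP^n}(1))=1$ we obtain $\iota(\sF)=\widehat{\iota}(\sF)=\epsilon(-K_\sF)=c$, with $X=\bP^n$ smooth (hence $\bQ$-factorial with klt singularities). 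So from now on assume $c=p/q$ with coprime integers $p,q$ and $q\ge 2$.

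Set $r'\coloneqq\lceil c\rceil$, $m\coloneqq q$ and $d\coloneqq r'q-p$. Using $0<c\le r$ and $c\le n-2$ one checks $1\le r'\le r$, $r'\le n-2$ (so $\dim\bP^{n-r'}\ge 2$), $1\le d<m$, and $r'-d/m=c$; in particular $d<mr'$. The idea is to choose a foliation $\sH$ on $Z\coloneqq\bP^{n-r'}$ with $\sO_Z(K_\sH)\cong\sO_Z(d)$ and with algebraic rank exactly $r-r'$, and then let $X$ be the normal generalised cone $Y$ over $(\bP^{n-r'},\sO_{\bP^{n-r'}}(m))$ carrying the foliation $\sF$ induced by $\sH$ as in Lemma~\ref{l.folia-generalised-cone}. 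Then $\dim Y=(n-r')+r'=n$; by Example~\ref{e.generalised-cone}, $Y$ is $\bQ$-factorial (as $\bP^{n-r'}$ is) with klt singularities (as $\bP^{n-r'}$ is klt with ample anticanonical class); and Lemma~\ref{l.folia-generalised-cone} gives that $-K_\sF$ is ample with $\epsilon(-K_\sF)=\iota(\sF)=\widehat{\iota}(\sF)=r'-d/m=c$.

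It remains to control $r^a(\sF)$ and to construct $\sH$. For the former, note that $\sF$ is the push-forward, under the birational contraction $\bP(\sE)\to Y$ of Example~\ref{e.generalised-cone}, of the pull-back foliation $\pi^{-1}\sH$ on the $\bP^{r'}$-bundle $\pi\colon\bP(\sE)\to\bP^{n-r'}$; the algebraic part of $\pi^{-1}\sH$ is $\pi^{-1}(\sH^a)$, of rank $r'+r^a(\sH)$ (through a general point it contains the $\bP^{r'}$-fibre of $\pi$ together with the preimage of the leaf of $\sH^a$, and it cannot be larger since the $\pi$-image of any subvariety tangent to $\pi^{-1}\sH$ is tangent to $\sH$), so that $r^a(\sF)=r'+r^a(\sH)$; thus I need $r^a(\sH)=r-r'$. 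If $r-r'\ge 1$, Example~\ref{e.folia-arb-index} applied to $(r-r',n-r')$ (a valid pair since $r<n$) with $d\ge 1>-(r-r')$ provides such an $\sH$. If $r-r'=0$, i.e.\ $r'=r$ — so $r-1<c<r$ and hence $n-r\ge 2$ — I instead take $\sH$ to be a general rank-one foliation on $\bP^{n-r}$ with $\det\sH\cong\sO_{\bP^{n-r}}(-d)$: such a subsheaf of $T_{\bP^{n-r}}$ exists because $d\ge 1$, it is saturated for a general choice, and a general rank-one foliation of this degree on $\bP^{n-r}$ is purely transcendental (as already invoked in Example~\ref{e.folia-arb-index}), so $r^a(\sH)=0$. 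Either way $r^a(\sF)=r'+(r-r')=r$, as required.

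The only non-formal ingredient here is the construction of $\sH$ with \emph{prescribed algebraic rank} — especially, when $r'=r$, the existence of purely transcendental rank-one foliations of arbitrary positive degree on $\bP^{N}$ with $N\ge 2$ — together with the (standard) fact that the algebraic rank of a pull-back foliation along a fibration adds the relative dimension. Everything else is routine manipulation of the parameters $r',m,d$ and direct quotation of Lemma~\ref{l.folia-generalised-cone} and Example~\ref{e.generalised-cone}.
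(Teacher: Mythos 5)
Your proposal is correct and follows the paper's construction essentially verbatim: after reducing to non-integer $c$ via Example~\ref{e.folia-arb-index}, set $r'=\lceil c\rceil$, form the normal generalised cone over $(\bP^{n-r'},\sO_{\bP^{n-r'}}(q))$ with vertex $\bP^{r'-1}$, and push forward the pull-back of a foliation $\sH$ on $\bP^{n-r'}$ with algebraic rank $r-r'$ and $\sO(K_\sH)\cong\sO(q\lceil c\rceil-p)$, reading off the invariants from Lemma~\ref{l.folia-generalised-cone}. You are, if anything, slightly more careful than the paper, which cites Example~\ref{e.folia-arb-index} for the construction of $\sH$ even when $r'=r$, where that example (requiring positive algebraic rank) does not literally apply; your separate treatment of that case via general purely transcendental rank-one foliations of positive degree, and your explicit verification that $r^a(\sF)=r'+r^a(\sH)$, tidy up details the paper leaves implicit.
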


\begin{proof}
	By Example \ref{e.folia-arb-index}, we may assume that $c\not\in \bZ$. Write $\lceil c\rceil - c =p/q<1$ for some positive integers $p$ and $q$. Denote $\lceil c\rceil$ by $r'$. Then we have $n-r'\geq 2$. Let $Y$ be the normal generalised cone over the base $(\bP^{n-r'},\sO_{\bP^{n-r'}}(q))$ with vertex $\bP^{r'-1}$. Let $\sH$ be a foliation on $\bP^{n-r'}$ with algebraic rank $r-r'$ and such that $\sO_{\bP^{n-r'}}(K_{\sH})\cong \sO_{\bP^{n-r'}}(p)$ (cf. Example \ref{e.folia-arb-index}). Denote by $\sF$ the foliation on $Y$ induced by $\sH$. Then $\sF$ has algebraic rank $r$ and by Lemma \ref{l.folia-generalised-cone} we have
	\[
	\epsilon(-K_{\sF}) = \iota(\sF) = \widehat{\iota}(\sF) = r' - \frac{p}{q} = \lceil c\rceil - \frac{p}{q} = c.
	\]
	This finishes the proof.
\end{proof}

\begin{rem}
	In general the foliations constructed in Proposition \ref{p.Sharpness-Fano-Folia} are very far from being log canonical in the sense of McQuillan (\cite[3.9]{AraujoDruel2013}).
\end{rem}

In the following we consider codimension one algebraically integrable foliations.

\begin{example}
	\label{e.codimension-foliation}
	Let $X$ be the weighted projective space $\bP(1,a_1,\dots,a_n)$ where $a_1\leq \dots \leq a_n$  are positive positive integers satisfying $\text{gcd}(a_1,\dots,a_n)=1$. Then $X$ is $\bQ$-factorial Fano variety of Picard number one and with klt singularities. Let $\sF$ be the foliation induced by the rational map $X\dashrightarrow \bP^1$ by sending $[x_0:x_1:\dots:x_n]$ to $[x_0^{a_1}:x_1]$. Then we have
	\[
	-K_{\sF} \sim_{\bQ} \left(\sum_{i=2}^n a_n\right) H,
	\]
	where $H$ is the prime divisor on $X$ defined as $\{x_0=0\}$. Moreover, note that the Cartier index of $H$ is $\text{lcm}(a_1,\dots,a_n)$ and by \cite[Example 22]{LiuZhuang2018} we have $\epsilon(H)=1/a_n$.
	\begin{enumerate}
		\item Assume that $n\geq 3$. Set $a_1=a_2=1$ and $a_3=\dots=a_n=m$. Then $\sF$ is a Fano foliation satisfying
		\[
		\widehat{\iota}(\sF)=\iota(\sF)=\varepsilon(-K_{\sF})=n-2+\frac{1}{m}.
		\]
		We remark that here $X$ is nothing but the normal generalised cone over the base $(\bP^2,\sO_{\bP^2}(m))$ with vertex $\bP^{n-3}$ and $\sF$ is the obtained as the pull-back of the foliation by lines on $\bP^2$ induced by a linear projection $\bP^2\dashrightarrow \bP^1$.
		
		\item Assume that $n\geq 3$. Set $a_1=\dots=a_{n-1}=m'$ and $a_n=m$. Then $\sF$ is a Fano foliation satisfying
		\[
		\widehat{\iota}(\sF)=\iota(\sF)=\frac{(n-2)m'+m}{m'm} \quad \text{and}\quad \epsilon(-K_{\sF})=1+\frac{(n-2)m'}{m}.
		\]
		In particular, for any rational number $n-2<c<n-1$, we may choose suitable coprime positive integers $m'$ and $m$ such that $m'<m$ and $\epsilon(-K_{\sF})=c$.
		
		\item Assume that $n=2$. Then $\sF$ is a Fano foliation on the surface $X$ satisfying
		\[
	    \widehat{\iota}(\sF) = \iota(\sF) = \frac{1}{a_1} \quad \text{and}\quad \epsilon(-K_{\sF})=1.
		\]
		
		\item Assume that $n=2$ and let $\sG$ be the foliation induced by the rational map $X\dashrightarrow \bP^1$ by sending $[x_0:x_1:x_2]$ to $[x_0^{a_2}:x_2]$. Then $\sG$ is a Fano foliation such that $-K_{\sG}=a_1 H$. In particular, we have
		\[
		\widehat{\iota}(\sG)=\iota(\sG) = \frac{1}{a_2} \quad\text{and} \quad \epsilon(-K_{\sG})=\frac{a_1}{a_2}.
		\]
		As a consequence, for any rational number $0<c<1$, we may choose suitable coprime integers $a_1$ and $a_2$ such that $a_1<a_2$ and $\epsilon(-K_{\sG})=c$.
	\end{enumerate}
\end{example}

\begin{question}
	\label{q.Fano-index-Seshadri}
	According to Proposition \ref{p.Sharpness-Fano-Folia} and Example \ref{e.codimension-foliation}, it is natural to ask the following question. Given a positive integer $n\geq 2$ and a rational number $n-2<c<n-1$, does there exist Fano foliations $\sF$ on an $n$-dimensional projective variety $X$ such that $\widehat{\iota}(\sF)=\iota(\sF)=c$? Note that by Example \ref{e.codimension-foliation} we have a positive answer if $c$ is contained in the following set 
		\[
		\{n-2+\frac{1}{a}\,|\,a\in\bZ_{>0}\}.
		\]
\end{question}

Now we are in the position to finish the proof of Proposition \ref{p.existence-examples}.

\begin{proof}[Proof of Proposition \ref{p.existence-examples}]
	It follows from Proposition \ref{p.examples-generalised-index}, Proposition \ref{p.Sharpness-Fano-Folia} and Example \ref{e.codimension-foliation}.
\end{proof}

\subsection{Rationally connectedness and Seshadri constant}
\label{s.RC-Seshadri}

In this last subsection we discuss the sharpness ofTheorem \ref{c.RC-Folia-large-Seshadri} and Theorem \ref{t.rationally-connectedness}. Let $X\subset \bP^{n+1}$ be the cone over a plane curve of degree three. Then $X$ is a Fano variety with lc singularities such that $\epsilon(-K_X)=n-1$, but $X$ is not rationally connected. In particular, this shows that Theorem \ref{t.Zhuang} (1) is sharp. On the other hand, it is shown by Araujo and Druel in \cite[Theorem 1.8]{AraujoDruel2019} that the general leaf of the algebraic part of a foliation $\sF\subsetneq T_X$ on a projective manifold $X$ is rationally connected if $\widehat{\iota}(\sF)\geq r^a-1$. 

\begin{example}
	\label{e.RC-sharpness}
	Let $(r,n)$ be a pair of positive integers such that $1<r<n$. Let $Y$ be the normal generalised cone over $(\bP^{n-r+1},\sO_{\bP^{n-r+1}}(m))$ for some positive integer $m$. Let $\sH$ be an algebraically integrable foliation by curves on $\bP^{n-r+1}$ such that $\sH\cong \sO_{\bP^{n-r+1}}(-d)$ for some $d>0$ and the closure of the general leaf of $\sH$ is a smooth curve of genus at least two. Let $\sF$ be the foliation on $Y$ induced by $\sH$. If $d<m(r-1)$, then $\sF$ is an algebraically integrable Fano foliation with rank $r$ and by Lemma \ref{l.folia-generalised-cone} we have
	\[
	\epsilon(-K_{\sF}) = \iota(\sF) =\widehat{\iota}(\sF) = r-1 - \frac{d}{m}.
	\]
	In particular, if $m$ tends to infinity, then $\epsilon(-K_{\sF})$ tends to $r-1$ from left. Moreover, the closure of the general leaf of $\sF$ is actually a normal generalised cone over a smooth curve with genus at least two and hence it is not rationally connected.
\end{example}

\begin{example}
	\label{e.RC-sharpness-lc}
	Let $(r,n)$ be a pair of positive integers such that $1<r<n$. Let $W$ be an $(n-r)$-dimensional projective manifold with $K_W\equiv 0$ and let $C$ be an elliptic curve. Set $Z=C\times W$ and denote by $\sH\cong\sO_Z$ the foliation by curves on $Z$ induced by the projection $Z=C\times W\rightarrow W$. Let $Y$ be the normal generalised cone over $(Z,\sO_Z(m))$, where $\sO_Z(1)$ is a very ample line bundle over $Z$. Then $Y$ has only lc singularities. Let $\sF$ be the foliation on $Y$ induced by $\sH$. Then $\sF$ is an algebraically integrable foliation whose general leaves are not rationally connected. Moreover, by Lemma \ref{l.folia-generalised-cone} we have
	\[
	\epsilon(-K_{\sF}) = \iota(\sF) =\widehat{\iota}(\sF) = r-1.
	\]
\end{example}

The following question is communicated to me by the anonymous referee.

\begin{question}
	Let $\sF\subsetneq T_X$ be a foliation with positive enough tangent sheaf on a projective variety $X$ with mild singularities. Does $\sF$ have mild singularities? We refer the reader to \cite[\S\,3]{AraujoDruel2013} for the different definitions of singularities of foliations.
\end{question}

In Theorem \ref{t.Seshadri-MaxValue}, we have classified Fano foliations on smooth projective varieties with maximal Seshadri constant and the smoothness plays a key role in the proof. On the other hand, in the viewpoint of Kobayashi-Ochiai's theorem for foliation (cf. Theorem \ref{t.Kobayashi-Ochiai-II}), it is natural to ask the same question for singular varieties.

\begin{question}
	Classify those Fano foliations $\sF$ on $\bQ$-factorial normal projective varieties (with klt singularities) such that $\epsilon(-K_{\sF})=r^a$.
\end{question}
	
\bibliographystyle{alpha}
\bibliography{FanoFoliSeshadri}
\end{document}